\newcommand{\procbreak}{\indent}
\theoremstyle{plain}
\newtheorem{theorem}
{\procbreak Theorem}[section]
\newtheorem{proposition}[theorem]
{\procbreak Proposition}
\newtheorem{lemma}[theorem]
{\procbreak Lemma}
\newtheorem{corollary}[theorem]
{\procbreak Corollary}
\theoremstyle{definition}
\newtheorem{definition}[theorem]
{\procbreak Definition}
\newtheorem{remark}[theorem]
{\procbreak Remark}
\numberwithin{equation}{section}
\renewcommand\={\displaystyle}
\newcommand\al{\alpha}
\newcommand\cd{\nabla}
\newcommand\dl{\delta}
\newcommand\Dl{\Delta}
\newcommand\lm{\lambda}
\newcommand\Lm{\Lambda}
\newcommand\nr[1]{\mathopen|#1\mathclose|}
\newcommand\ol{\overline}
\newcommand\pd{\partial}
\newcommand\ssum{\hbox{$\sum$}}
\newcommand\te{\theta}
\newcommand\tr{\operatorname{tr}}
\newcommand\vp{\varphi}
\newcommand\wt{\widetilde}
\begin{document}

\title{Biharmonic submanifolds in a Riemannian manifold}

\author{Norihito Koiso}
\address{Department of Mathematics,
Graduate School of Science,
Osaka University,
Toyonaka, Osaka, 560-0043, Japan}

\author{Hajime Urakawa}
\address{Division of Mathematics,
Graduate School of Information Sciences,
Tohoku University,
Aoba 6-3-09, Sendai, 980-8579, Japan}
\curraddr{Institute for International Education,
Tohoku University, \allowbreak Kawauchi 41, Sendai 980-8576, Japan}
\email{urakawa@math.is.tohoku.ac.jp}

\keywords{minimal hypersurface, biharmonic hypersurface, principal curvature, Chen's conjecture}
\subjclass[2000]{primary 58E20, secondary 53C43}
\thanks{
Supported by the Grant-in-Aid for Challenging Exploratory Research Grant Number 23654026 and the Grant-in-Aid for the Scientific Research (C) Grant Number 25400154.
}

\begin{abstract}
In this paper, we solve affirmatively B.-Y. Chen's conjecture
for hypersurfaces in the Euclidean space, under a generic condition.
More precisely,
every biharmonic hypersurface of the Euclidean space
must be minimal if their principal curvatures are simple, and
the associated frame field is irreducible.
\end{abstract}

\maketitle

\section{Introduction}

In this paper, we solve
B.-Y.Chen's conjecture for hypersurfaces
of the Euclidean space
in the case that
every principal curvature is simple and some generic condition is satisfied.

A map $\varphi:(M,g)\to(N,h)$
is called {\em harmonic} if
it is a critical point of the energy functional
$\=E(\varphi)=\frac12\int_M\nr{d\varphi}^2\,v_g$.
Its Euler-Lagrange equation is that
the tension field $\tau=\tau(\varphi)$ vanishes.
Recall that an isometric immersion
$\varphi:(M,g)\to(N,h)$ is minimal if and only if it is harmonic.

In 1983, Eells and Lemaire \cite{EL} introduced
the notion of $k$-energy.
A map $\varphi:(M,g)\to(N,h)$ is called {\em biharmonic} if it is a critical point of the {\em bienergy}
$\=E_2(\varphi)=\frac12\int_M\nr{\tau(\varphi)}^2\,v_g$.

A minimal isometric immersion is always biharmonic, and many researchers have asked whether
the converse is true, namely
under which conditions, a biharmonic isometric immersion is minimal.
In this connection, Chen \cite{C1} proposed the conjecture

\medskip
{\bf B.-Y. Chen's conjecture:}\,\,
{\em Every biharmonic submanifold
of the Euclidean space must be minimal.}

\medskip
Caddeo, Montald and Oniciuc \cite{CMO} raised
the generalized Chen's conjecture:

\medskip
{\bf The generalized B.-Y. Chen's conjecture:}\,\,
{\em Every biharmonic submanifold of a Riemannian manifold of non-positive curvature must be minimal.}

\medskip
In 2010,
Ou \cite{Ou1} gave a counter example of the generalized conjecture.

On the other hand,
Hasanis and Vlachosin \cite{HV}, and Defever \cite{De} showed the Chen's conjecture is true for
hypersurfaces in the $4$-dimensional Euclidean space.
Recently, Akutagawa and Maeta \cite{AM}
showed that any complete, proper (i.e., the preimage of each compact subset is compact) biharmonic submanifold of the Euclidean space is minimal.

The main theorem in this paper is as follows:

\medbreak
{\bf Main theorem.}
{\em Let $M$ be an $n$-dimensional biharmonic hypersurface
of the $(n+1)$-dimensional Euclidean space
$E^{n+1}$.
Assume that all the principal curvatures are simple and that $g(\nabla_{v_i}v_j,v_k)\ne0$
for all distinct triplets $\{v_i,v_j,v_k\}$ of unit principal curvature vectors in the kernel of $d\tau$. Then, $M$ is minimal.}
\medbreak

For more precise statement of this theorem, see Theorem \ref{theorem:bh-is-minimal} and Definition \ref{definition:irreducible} of irreducibility.

We emphasize that we need not completeness assumption to $M$ in Theorem \ref{theorem:bh-is-minimal}.

The outline of this paper is as follows.
In Section \ref{section:BHSM-in-general}, we prepare several materials on biharmonic submanifolds $M$
in a Riemannian manifold $\overline{M}$, and show that
every $n$-dimensional Riemannian manifold $M$ can be embedded as a biharmonic, but not minimal hypersurface in some $(n+1)$-dimensional Riemannian manifold $\overline{M}$ (Theorem \ref{theorem:can-embed}).

In Section \ref{section:BHSM-in-CCS}, we treat
$n$-dimensional
non-minimal biharmonic submanifold $M^n$
of the $(n+m)$-dimensional
space form $\overline{M}^{n+m}(K)$ with
constant sectional curvature $K$.
We show that if $K\le0$, then $\nr\tau^2$ does not attain a local maximum.

In Section \ref{section:BH-hypersurface-in-CCS},
we treat non-minimal biharmonic hypersurfaces $M^n$ of
the space form $\overline{M}^{n+1}(K)$ with
$K\le0$.
In this case, $-\tau/2$ becomes a simple principal curvature of $M$.
Let $\{\lm_i\}_{i\le n}$ be the principal curvatures of $M$, where $\lm_n=-\tau/2$,
and $\{v_i\}$ be the corresponding orthonormal principal curvature vectors.
Since $\tau$ is not constant,
$F=\{x\in M\mid\tau(x)=(\text{constant}\;$c$)\}$ is a hypersurface of $M$ around every generic point of $M$, and
every $v_i$ is tangent to $F$ except $v_n$.
We call $F$ a {\em characteristic hypersurface} of $M$.
Every $v_i$ becomes a principal curvature vector of $F$ in $M$ except $v_n$.
Let $\mu_i$ be the principal curvature of $F$ in $M$ for the direction $v_i$.
We show that $\{\lm_i,\mu_i\}_{i<n}$ satisfies an over-determined ODE along the $v_n$-curves (Proposition \ref{proposition:ODE-0}).

In Section \ref{section:analysis-overdetermined-ODE}, we analyze the over-determined ODE,
and show that the set of all initial values of the ODE is an algebraic manifold in ${\bf R}^{2(n-1)}$.

In Section \ref{section:irreducibility}, we introduce the notion of the {\em irreducibility} of the frame field $\{v_i\}_{i<n}$ of $F$ (Definition \ref{definition:irreducible}).
We show that $\{\lm_i\}$ and $\{\mu_i\}$ are {\em linearly related} under irreducibility assumption.

In Section \ref{section:constness-of-lmi}, we show that $\{\lm_i\}$ and $\{\mu_i\}$ are constant along $F$.

Finally, in Section \ref{section:proof-of-main-theorem}, we give a proof
to our main theorem, Theorem \ref{theorem:bh-is-minimal}.

\section{Biharmonic submanifolds in a Riemannian manifold}
\label{section:BHSM-in-general}

A smooth map between Riemannian manifolds
$\vp:(M,g)\to(\ol M,\ol g)$
is said to be {\em biharmonic} if it is a critical point of the
{\em bienergy}
$\=E_2(\varphi)=\frac12\int_M\nr\tau^2\,v_g$.
The Euler-Lagrange equation is given by
\begin{equation}
\label{eq:bhmap}
\ol\Dl\tau-g^{ij}\ol R(\tau,\pd_i)\pd_j=0,
\end{equation}
where
$\pd_i$ is the partial differentiation with respect to the local coordinates of $M$,
$R,\ol R$ are the curvature tensors of $M$ and $\ol M$, respectively, and
\begin{equation}
\begin{aligned}
&\al(\pd_i,\pd_j):=\ol\cd_{\pd_i}(\vp_*\pd_j)-\vp_*(\cd_{\pd_i}\pd_j),\quad
\tau:=\tr\al,\\
&\Dl:=-g^{ij}\cd_i\cd_j,\quad
\ol\Dl:=-g^{ij}\ol\cd_i\ol\cd_j.
\end{aligned}
\end{equation}
In the following, we assume that $M$ is a submanifold of $(\ol M,\ol g)$
ant the inclusion map $\iota$ is a biharmonic map with respect to the induced metric $g=\iota^*\ol g$.
Such a submanifold $M$ is called a {\em biharmonic
submanifold} of $\ol M$. Then,
$\iota^*T\ol M$ is decomposed into $TM\oplus TM^\bot$ and
$\al$, $\tau$ have their values in $TM^\bot$.
We also denote the normal connection of
$TM^\bot$ by $\cd$.

We decompose the equation \eqref{eq:bhmap} into the
tangential direction and the normal direction.
For
$\ol\Dl\tau$, we have
\begin{equation}
\begin{aligned}
&g((\ol\Dl\tau)^\top,\pd_i)
=\ol g((\cd^j\al)(\pd_j,\pd_i),\tau)
+2\ol g(\al(\pd_j,\pd_i),\cd^j\tau),
\\
&(\ol\Dl\tau)^\bot
=\Dl\tau+g^{ij}g^{k\ell}\ol g(\al(\pd_j,\pd_k),\tau)\al(\pd_i,\pd_\ell).
\end{aligned}
\end{equation}
For $g^{jk}\ol R(\tau,\pd_j),\pd_k)^\top$, we have
\begin{equation}
\begin{aligned}
&g((g^{jk}\ol R(\tau,\pd_j),\pd_k))^\top,\pd_i)
=g^{jk}g(\ol R(\pd_i,\pd_k),\pd_j),\tau)\\
&=g^{jk}\ol g((\cd_{\pd_i}\al)(\pd_k,\pd_j)
-(\cd_{\pd_k}\al)(\pd_i,\pd_j),\tau)
\quad(\text{by Codazzi eq.})\\
&=\ol g(\cd_{\pd_i}\tau,\tau)-\ol g((\cd^j\al)(\pd_j,\pd_i),\tau)\\
&=\frac12\cd_i\nr\tau^2-\ol g((\cd^j\al)(\pd_j,\pd_i),\tau).
\end{aligned}
\end{equation}

Now we introduce the following notions:
\begin{equation}
\label{eq:definition-dl,al2,Rbot}
\begin{aligned}
&(\dl\al)(\pd_i):=-(\cd^j\al)(\pd_j,\pd_i),
\\
&(\al^2)(V):=g^{ij}g^{k\ell}\ol g(\al(\pd_j,\pd_k),V)\al(\pd_i,\pd_\ell),
\\
&(\ol R^\bot_V)(\pd_i,\pd_j):=(\ol R(V,\pd_i)\pd_j)^\bot.
\end{aligned}
\end{equation}
By \eqref{eq:bhmap} $\sim$ \eqref{eq:definition-dl,al2,Rbot}, the equations of biharmonic submanifolds can be written as follows:
\begin{lemma}
A submanifold $M$ of $(\ol M,\ol g)$ is a biharmonic submanifold
if and only if the following two equations hold:
\begin{subequations}
\label{eq:bhmfd}
\begin{align}
\label{eq:bhmfd-bot}
&\Dl\tau+\al^2(\tau)-\tr\ol R^\bot_\tau=0,
\\
\label{eq:bhmfd-top}
&-2\ol g((\dl\al)(\pd_i),\tau)+2\ol g(\al(\pd_j,\pd_i),\cd^j\tau)
-\frac12\cd_i\nr\tau^2
=0.
\end{align}
\end{subequations}
\end{lemma}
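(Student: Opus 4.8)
The plan is to derive the biharmonic submanifold equations by taking the known Euler--Lagrange equation \eqref{eq:bhmap} for biharmonic maps and projecting it onto the tangent and normal bundles of $M$. All the ingredients are already assembled in the computations preceding the statement, so the proof is essentially a matter of bookkeeping: combine the normal component of $\ol\Dl\tau$ with the normal component of $g^{jk}\ol R(\tau,\pd_j)\pd_k$ to get \eqref{eq:bhmfd-bot}, and combine the tangential component of $\ol\Dl\tau$ with the tangential component of the curvature term to get \eqref{eq:bhmfd-top}.

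First I would recall that, since $\iota$ is an isometric immersion, $\al$ and $\tau$ take values in $TM^\bot$, and that \eqref{eq:bhmap} is an identity of sections of $\iota^*T\ol M = TM\oplus TM^\bot$; hence it is equivalent to the simultaneous vanishing of its two orthogonal components. For the normal part, I would substitute the displayed formula for $(\ol\Dl\tau)^\bot = \Dl\tau + g^{ij}g^{k\ell}\ol g(\al(\pd_j,\pd_k),\tau)\al(\pd_i,\pd_\ell)$ and recognize the second summand as exactly $\al^2(\tau)$ by the definition in \eqref{eq:definition-dl,al2,Rbot}. The normal component of the curvature term is $g^{jk}(\ol R(\tau,\pd_j)\pd_k)^\bot = \tr\ol R^\bot_\tau$, again by definition. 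Subtracting gives \eqref{eq:bhmfd-bot}.

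For the tangential part, I would use the displayed identity $g((\ol\Dl\tau)^\top,\pd_i) = \ol g((\cd^j\al)(\pd_j,\pd_i),\tau) + 2\ol g(\al(\pd_j,\pd_i),\cd^j\tau)$ together with the computation (via the Codazzi equation) that $g(\,(g^{jk}\ol R(\tau,\pd_j)\pd_k)^\top,\pd_i\,) = \tfrac12\cd_i\nr\tau^2 - \ol g((\cd^j\al)(\pd_j,\pd_i),\tau)$. Forming the tangential component of \eqref{eq:bhmap}, the terms $\ol g((\cd^j\al)(\pd_j,\pd_i),\tau)$ partially cancel, and after using $(\dl\al)(\pd_i) = -(\cd^j\al)(\pd_j,\pd_i)$ and multiplying through by a convenient constant one arrives at \eqref{eq:bhmfd-top}. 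Conversely, if both \eqref{eq:bhmfd-bot} and \eqref{eq:bhmfd-top} hold, reassembling the normal and tangential parts reproduces \eqref{eq:bhmap}, so $M$ is biharmonic.

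Since every step is an algebraic rearrangement of formulas already established in the excerpt, there is no substantive obstacle; the only point requiring care is sign and index conventions—in particular keeping track of the factors of $2$ and $\tfrac12$ when passing between $\ol g((\dl\al)(\pd_i),\tau)$, the raw $(\cd^j\al)$ terms, and the $\cd_i\nr\tau^2$ term, and making sure the overall normalization matches the stated form of \eqref{eq:bhmfd-top}.
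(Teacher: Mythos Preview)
Your proposal is correct and follows exactly the same approach as the paper, which simply collects the displayed computations preceding the lemma into the two components of \eqref{eq:bhmap}. One small verbal slip: the two $\ol g((\cd^j\al)(\pd_j,\pd_i),\tau)$ terms do not ``partially cancel'' but rather \emph{add} (since the curvature term enters with a minus sign in \eqref{eq:bhmap}), producing the coefficient $2$ in $-2\ol g((\dl\al)(\pd_i),\tau)$ directly, with no overall rescaling needed.
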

By using these equations, we first give examples of biharmonic hypersurfaces which are not minimal submanifolds.
In the following, we regard
$\al,\tau$ to be real values in terms of $N$.
We need first the following lemma.
\begin{lemma}
\label{lemma:general-hyper}
In a Riemannian manifold $(\ol M=M\times{\bf R},\,\ol g=g(t)+dt^2)$,
the second fundamental form $\al_{ij}$ and the symmetric bilinear form
$\beta_{ij}:=\ol g(\ol R^\bot_N(\pd_i,\pd_j),N)$ on $M$ are given by
\begin{equation}
\label{eq:general-hyper}
\begin{aligned}
&\al_{ij}
=-\frac12g'_{ij},
\quad
\beta_{ij}
=-\frac12g''_{ij}+\frac14g^{k\ell}g'_{ik}g'_{j\ell},
\end{aligned}
\end{equation}
where, $*'$ means the differentiation with respect to $t$.

As a consequence, for every pair of symmetric bilinear forms
$\al$ and $\beta$ on $M$, there exists a Riemannian metric $\ol g=g(t)+dt^2$ such that $\al$ coincides with
the second fundamental form at $t=0$ and $\beta$ coincides with $\ol g(\ol R^\bot_N(\pd_i,\pd_j),N)$ at $t=0$, respectively.
\end{lemma}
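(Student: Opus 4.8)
The plan is to read off the formulas in~\eqref{eq:general-hyper} as a system of ordinary differential equations for the matrix-valued function $t\mapsto g_{ij}(t)$, and to solve it with the prescribed data as initial conditions at $t=0$. Concretely, fix a coordinate patch on $M$ and regard $g_{ij}(t)$ as a curve in the open cone of positive-definite symmetric matrices. The first relation $\al_{ij}=-\tfrac12 g'_{ij}$ says that we must have $g'_{ij}(0)=-2\al_{ij}$; the second, once $g'_{ij}(0)$ has been fixed, says $g''_{ij}(0)=-2\beta_{ij}+\tfrac12 g^{k\ell}(0)g'_{ik}(0)g'_{j\ell}(0)=-2\beta_{ij}+2\,g^{k\ell}(0)\al_{ik}\al_{j\ell}$. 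So the statement amounts to: given a positive-definite $g_{ij}(0)$ (take $g_{ij}(0)=g_{ij}$, the metric we started with on $M$), a prescribed first derivative, and a prescribed second derivative at $t=0$, produce a smooth one-parameter family of metrics realizing them.

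The cleanest way to carry this out is simply to \emph{define} $g_{ij}(t)$ by a truncated Taylor polynomial: set
\begin{equation}
\label{eq:taylor-metric}
g_{ij}(t):=g_{ij}-2\al_{ij}\,t+\bigl(-\beta_{ij}+g^{k\ell}\al_{ik}\al_{j\ell}\bigr)t^2,
\end{equation}
where on the right all quantities are the given data on $M$ (independent of $t$) and $g^{k\ell}$ is the inverse of $g_{ij}$. This is a symmetric bilinear form for every $t$, it is smooth in $(t,x)$, and at $t=0$ it equals $g_{ij}$; by positivity of $g_{ij}$ and compactness of the unit sphere bundle over any point, $g_{ij}(t)$ remains positive-definite for $|t|$ small, so after shrinking to $M\times(-\varepsilon,\varepsilon)$ it is a genuine Riemannian metric. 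Then $\ol g:=g(t)+dt^2$ makes $M\times(-\varepsilon,\varepsilon)$ a Riemannian manifold in which $M=M\times\{0\}$ is an embedded hypersurface with unit normal $N=\pd_t$. A direct check of~\eqref{eq:taylor-metric} gives $g'_{ij}(0)=-2\al_{ij}$ and $g''_{ij}(0)=2(-\beta_{ij}+g^{k\ell}\al_{ik}\al_{j\ell})$, so plugging into~\eqref{eq:general-hyper} yields that the second fundamental form of $M$ in $\ol M$ at $t=0$ is $\al_{ij}$ and that $\ol g(\ol R^\bot_N(\pd_i,\pd_j),N)=\beta_{ij}$ at $t=0$, as required.

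There is no serious obstacle here once the first part of the lemma is granted; the only point demanding a little care is the algebra that inverts~\eqref{eq:general-hyper} for $\beta$, i.e.\ checking that the coefficient of $t^2$ in~\eqref{eq:taylor-metric} is exactly the one that cancels the $\tfrac14 g^{k\ell}g'_{ik}g'_{j\ell}$ term and leaves $-\tfrac12 g''_{ij}=-\beta_{ij}+\cdots$ matching $\beta_{ij}$. One should also note that nothing forces a \emph{global} choice of unit normal or a globally defined metric beyond what the product structure already provides, and that the construction is purely local in the $t$-direction, which is all that is claimed. (If one preferred, one could instead invoke the existence-and-uniqueness theorem for the ODE system $g'_{ij}=-2\al_{ij}(t)$, $g''_{ij}=\dots$, but the explicit polynomial~\eqref{eq:taylor-metric} is shorter and makes smoothness manifest.)
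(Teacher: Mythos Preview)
Your proposal addresses only the second clause of the lemma (the existence of $\ol g$ realizing prescribed $\al$ and $\beta$), and for that part your explicit Taylor polynomial~\eqref{eq:taylor-metric} is correct and is precisely what the paper has in mind when it says ``we can solve~\eqref{eq:general-hyper} as a system of equations for $g'$ and $g''$.'' In fact your version is more explicit than the paper's one-line dismissal, and the check that positive-definiteness persists for small $|t|$ is a detail worth including.

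However, you have not proved the first part of the lemma at all: you write ``once the first part of the lemma is granted,'' but the derivation of the formulas $\al_{ij}=-\tfrac12 g'_{ij}$ and $\beta_{ij}=-\tfrac12 g''_{ij}+\tfrac14 g^{k\ell}g'_{ik}g'_{j\ell}$ is the substance of the lemma and occupies most of the paper's proof. The formula for $\al_{ij}$ is textbook, but the curvature computation for $\beta_{ij}$ requires working out $\ol\cd_{\pd_i}\pd_t=\tfrac12 g^{k\ell}g'_{ik}\pd_\ell$ and $\ol\cd_{\pd_t}\pd_t=0$, then expanding $\ol g(\ol R(\pd_t,\pd_i)\pd_j,\pd_t)$ via the definition of curvature. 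None of this is hard, but it is not done in your proposal, so as written the proof is incomplete. Supply that computation and you will have a complete argument that matches the paper's, with a slightly more explicit treatment of the existence step.
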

\begin{proof}
We add $t$ to the coordinates $\{x^i\}$ of
$M$ to become the coordinates of $\ol M$.
We denote the differentiation with respect to
$x^i$ by $\pd_i$, and the differentiation with respect to $t$
by $\pd_t$. For the second fundamental form
$\al$, it is well known that
\begin{equation}
\al_{ij}
=\ol g(\ol\cd_{\pd_i}\pd_j,\pd_t)
=-\frac12g'(\pd_i,\pd_j)
=-\frac12g_{ij}'.
\end{equation}

Furthermore, since
$\ol g(\ol\cd_{\pd_i}\pd_t,\pd_t)=0$
and
$
\=\ol g(\ol\cd_{\pd_i}\pd_t,\pd_j)
=-\ol g(\pd_t,\ol\cd_{\pd_i}\pd_j)\allowbreak
=\frac12g_{ij}'
$,
we have
$\=\ol\cd_{\pd_i}\pd_t=\frac12g^{k\ell}g_{ik}'\pd_\ell$.
Thus, since
$
\ol g(\ol\cd_{\pd_t}\pd_t,\pd_i)
=\break-\ol g(\pd_t,\ol\cd_{\pd_t}\pd_i)
=0
$
we have $\ol\cd_{\pd_t}\pd_t=0$.
Therefore, we obtain
\begin{equation}
\begin{aligned}
\ol g(&\ol R(\pd_t,\pd_i)\pd_j,\pd_t)
=\ol g(\ol\cd_{\pd_t}\ol\cd_{\pd_i}\pd_j
-\ol\cd_{\pd_i}\ol\cd_{\pd_t}\pd_j,\pd_t)\\
&=\pd_t\{\ol g(\ol\cd_{\pd_i}\pd_j,\pd_t)\}
	-\ol g(\ol\cd_{\pd_i}\pd_j,\ol\cd_{\pd_t}\pd_t)\\
&\qquad-\pd_i\{\ol g(\ol\cd_{\pd_t}\pd_j,\pd_t)\}
	+\ol g(\ol\cd_{\pd_t}\pd_j,\ol\cd_{\pd_i}\pd_t)\\
&=-\frac12g_{ij}''-0-0+\frac14\ol g(g^{pq}g'_{jp}\pd_q,g^{k\ell}g'_{ki}\pd_\ell)\\
&=-\frac12g''_{ij}+\frac14g^{k\ell}g'_{ki}g'_{\ell j}.
\end{aligned}
\end{equation}

Since for any $\al$ and $\beta$, we can solve \eqref{eq:general-hyper} as a system of equations for $g'$ and $g''$, the latter half statement holds.
\end{proof}

By using Lemma \ref{lemma:general-hyper}, we have the following.
\begin{theorem}
\label{theorem:can-embed}
Every $n$ dimensional Riemannian manifold $M$ can be embedded into an $(n+1)$-dimensional Riemannian manifold $\ol M$ as a biharmonic hypersurface, but not minimal.
\end{theorem}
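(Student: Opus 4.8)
The plan is to reduce the statement to Lemma \ref{lemma:general-hyper}: it suffices to exhibit, on the given manifold $(M,g)$, a pair of symmetric bilinear forms $\al,\beta$ for which the hypersurface form of the biharmonic system \eqref{eq:bhmfd} holds with $\tau:=\tr\al$ not identically zero. Indeed, given such $\al$ and $\beta$, Lemma \ref{lemma:general-hyper} produces a metric $\ol g=g(t)+dt^2$ on $\ol M=M\times(-\varepsilon,\varepsilon)$ whose second fundamental form at $t=0$ equals $\al$ and whose tensor $\beta_{ij}=\ol g(\ol R^\bot_N(\pd_i,\pd_j),N)$ at $t=0$ equals $\beta$; the slice $M\times\{0\}$ is then a biharmonic hypersurface of $\ol M$ that is not minimal, which is what we want.

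First I would specialize \eqref{eq:bhmfd} to a hypersurface. Since $TM^\bot$ is a line bundle whose unit normal $N$ is parallel for the normal connection, every term is a scalar multiple of $N$, and the normal equation \eqref{eq:bhmfd-bot} becomes the scalar equation
\begin{equation*}
\Dl\tau+\nr\al^2\,\tau-(\tr\beta)\,\tau=0,
\end{equation*}
where $\Dl$ now denotes the Laplacian on functions and $\nr\al^2=g^{ij}g^{k\ell}\al_{jk}\al_{i\ell}$, while the tangential equation \eqref{eq:bhmfd-top} becomes a scalar identity every term of which carries a factor $\cd\tau$ or $\dl\al$.

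Next I would take the simplest model, a totally umbilic slice with constant principal curvature: set $\al:=c\,g$ and $\beta:=c^2g$ for an arbitrary nonzero constant $c$. Then $\tau=nc$ is a nonzero constant, so $\cd\tau=0$, and $\al=c\,g$ is parallel (as $g$ is), so $\dl\al=0$; hence the tangential equation holds identically. For the normal equation, $\nr\al^2=nc^2$ and $\tr\beta=nc^2$, so $\Dl\tau+\nr\al^2\tau-(\tr\beta)\tau=0+(nc^2)(nc)-(nc^2)(nc)=0$; thus both equations of \eqref{eq:bhmfd} hold. Applying Lemma \ref{lemma:general-hyper} to this pair $(\al,\beta)$ --- concretely, on $M\times(-\varepsilon,\varepsilon)$ the metric $\ol g=(1-2ct)g+dt^2$ has $g'(0)=-2c\,g$, $g''(0)=0$, so by \eqref{eq:general-hyper} its second fundamental form at $t=0$ is $c\,g$ and its $\beta$ at $t=0$ is $\tfrac14g^{k\ell}g'_{ik}g'_{j\ell}=c^2g$ --- makes $M\times\{0\}$ a biharmonic hypersurface of $\ol M$, and it is not minimal since $\tau=nc\ne0$.

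Given Lemma \ref{lemma:general-hyper}, this argument is essentially routine; the only points needing care are the correct reduction of the general submanifold system \eqref{eq:bhmfd} to its scalar hypersurface form and the bookkeeping of signs, especially the relation between $\tr\ol R^\bot_\tau$ and $\beta$. The reason the umbilic constant-mean-curvature model works so cleanly is that it annihilates the entire tangential equation together with every derivative term in the normal equation, leaving only the single algebraic constraint $\tr\beta=\nr\al^2$, which is met by $\beta=c^2g$.
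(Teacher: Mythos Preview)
Your argument is correct and follows the paper's approach exactly: apply Lemma~\ref{lemma:general-hyper} with the totally umbilic choice $\al=cg$ and a proportional $\beta$, so that $\tau=nc$ is a nonzero constant, the tangential equation is trivial, and the normal equation reduces to the algebraic identity $\nr\al^2=\tr\beta$. Your normalization $\beta=c^2g$ (giving $\tr\beta=nc^2=\nr\al^2$) and the explicit metric $(1-2ct)g+dt^2$ are a nice addition; the paper's stated $\beta=(c^2/n)g$ seems to carry a factor-of-$n$ slip in the displayed check ``$c^2\cdot nc-nc^2/n\cdot nc=0$'', since $\nr\al^2=nc^2$ rather than $c^2$.
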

\begin{proof}
For every $c\ne0$, we apply Lemma \ref{lemma:general-hyper}
to $\al=cg$, $\beta=(c^2/n)g$, and we construct a Riemannian metric $\ol g$. Then, it holds that $\tau=nc$,
which implies \eqref{eq:bhmfd-top},
and that $c^2\cdot nc-nc^2/n\cdot nc=0$,
which implies \eqref{eq:bhmfd-bot}, respectively.
\end{proof}

Note that
our Riemannian metric $\ol g$ satisfies that the sectional curvature
$
(g_{ii})^{-1}\ol g(\ol R(N,\pd_i)\pd_i,N)
=(g_{ii})^{-1}\beta_{ii}
=c^2/n
>0
$.

\section{Biharmonic submanifolds $M^n$ in a space form $\ol M^{n+m}$}
\label{section:BHSM-in-CCS}

For submanifolds $M^n$ in a space form $\overline{M}^{n+m}$ of sectional curvature
$K$, Codazzi equation holds: $\cd_i\al_{jk}=\cd_j\al_{ik}$
(\cite{KN}, Corollary 4.4). Therefore, we have
\begin{equation}
\begin{aligned}
&(\ol R_\tau^\bot)(\pd_i,\pd_j)=Kg(\pd_i,\pd_j)\tau,\quad
\tr\ol R^\bot_\tau=nK\tau,\\
&(\dl\al)(\pd_i)=-(\cd^j\al)(\pd_j,\pd_i)=-(\cd_i\al)(\pd_j,\pd^j)=-\cd_i\tau,\\
&-2\ol g((\dl\al)(\pd_i),\tau)=2\ol g(\cd_i\tau,\tau)=\cd_i\nr\tau^2.
\end{aligned}
\end{equation}
Thus, \eqref{eq:bhmfd} is written as
\begin{subequations}
\begin{align}
\label{eq:bhmfd-R-bot}
&\Dl\tau+\al^2(\tau)-nK\tau=0,
\\
\label{eq:bhmfd-R-top}
&\frac12\cd_i\nr\tau^2+2\ol g(\al(\pd_j,\pd_i),\cd^j\tau)
=0.
\end{align}
\end{subequations}
By taking the inner product of
\eqref{eq:bhmfd-R-bot}
and $\tau$, we have
\begin{equation}
\begin{aligned}
&\ol g(\Dl\tau,\tau)
=-\cd^i(\ol g(\cd_i\tau,\tau))+\ol g(\cd_i\tau,\cd^i\tau)
=\frac12\Dl\nr\tau^2+\nr{\cd\tau}^2,
\\
&\ol g(\al^2(\tau),\tau)
=\nr{\al_\tau}^2,
\end{aligned}
\end{equation}
where we put
$(\al_\tau)_{ij}:=\ol g(\al(\pd_i,\pd_j),\tau)$.
Therefore, $\nr\tau^2$ satisfies
\begin{equation}
\frac12\Dl\nr\tau^2+\nr{\cd\tau}^2+\nr{\al_\tau}^2-nK\nr\tau^2=0.
\end{equation}
From this elliptic equation for $\nr\tau^2$, we have the following

\begin{proposition}[\cite{On} Proposition 2.4]
\label{proposition:tau-is-nonconst}
Assume that $\ol M^{n+m}$ is a space form of
constant sectional curvature $K$ less than or equal to $0$,
and that $M$ is a biharmonic submanifold of $\ol M$.
If $\nr\tau^2$ admits a local maximum at some point, then $M^n$ is minimal.
In particular, if $\nr\tau^2$ is constant, then $M$ is minimal.

\end{proposition}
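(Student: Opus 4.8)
The plan is to exploit the elliptic equation
\[
\tfrac12\Dl\nr\tau^2+\nr{\cd\tau}^2+\nr{\al_\tau}^2-nK\nr\tau^2=0
\]
derived just above, together with the maximum principle. First I would recall the sign convention $\Dl=-g^{ij}\cd_i\cd_j$ in force in this paper, so that $\Dl$ is a nonnegative operator and the classical (weak) maximum principle applies in the form: if $u$ attains a local maximum at $x_0$ and $\Dl u\le0$ in a neighbourhood, then $u$ is constant near $x_0$. Setting $u=\nr\tau^2$, the displayed identity gives
\[
\tfrac12\Dl u=-\nr{\cd\tau}^2-\nr{\al_\tau}^2+nK\,u.
\]
Since $K\le0$ and $u=\nr\tau^2\ge0$, every term on the right is $\le0$, hence $\Dl u\le0$ throughout $M$.

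Now suppose $u$ attains a local maximum at some point $x_0$. By the maximum principle $u$ is constant on a neighbourhood of $x_0$; I would then note that the standard unique-continuation/connectedness argument (or simply re-running the maximum principle, since a constant is still a local max at each of its points) shows the set where $u$ equals this maximal value is open and closed, hence $u\equiv c$ on all of $M$. With $u$ constant, $\cd u\equiv0$ and $\Dl u\equiv0$, so the identity collapses to
\[
\nr{\cd\tau}^2+\nr{\al_\tau}^2-nK\,c=0
\]
pointwise. Each of the three terms is individually $\ge0$ (using $K\le0$ and $c\ge0$), so each must vanish; in particular $nK\,c=0$. If $K<0$ this forces $c=0$; if $K=0$ we instead read off $\nr{\al_\tau}^2\equiv0$, and since $(\al_\tau)_{ij}=\ol g(\al(\pd_i,\pd_j),\tau)$, taking the trace over $i,j$ gives $\nr\tau^2=\tr\al_\tau=0$, so again $c=0$. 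Either way $\tau\equiv0$, i.e.\ $M$ is minimal, which proves the first assertion. The ``in particular'' statement is immediate: a constant function certainly attains a local maximum (at every point), so the minimality conclusion applies.

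The only genuinely delicate point is the passage from ``local maximum'' to ``$u$ constant on all of $M$'': one needs the strong maximum principle (or the Hopf-type propagation argument) to conclude that $u$ is locally constant near $x_0$, and then connectedness of $M$ to globalize. I would invoke E.\ Hopf's strong maximum principle for the operator $\Dl u - nK\,u = -\nr{\cd\tau}^2-\nr{\al_\tau}^2 \le 0$ (a linear elliptic inequality with the zeroth-order coefficient $-nK\ge0$ of the right sign, so the strong maximum principle for nonnegative local maxima is available). Everything else is bookkeeping with the nonnegativity of the three terms. No completeness or compactness hypothesis on $M$ is needed, consistent with the remark made earlier in the paper.
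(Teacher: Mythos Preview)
Your argument is correct and reaches the same conclusion, but the paper's proof is shorter and more elementary. Rather than invoking the strong maximum principle to first make $u=\nr\tau^2$ locally constant, the paper simply uses that at a local maximum $x_0$ one has $\Dl\nr\tau^2(x_0)\ge0$ \emph{pointwise}; plugging this single inequality into the identity (together with $K\le0$) forces $\nr{\al_\tau}^2(x_0)=0$, and taking the trace as you did gives $\nr\tau^2(x_0)=0$. Thus the local maximum \emph{value} is $0$, and since $\nr\tau^2\ge0$ this immediately yields $\tau\equiv0$ on a whole neighbourhood of $x_0$ --- no Hopf-type propagation is needed at this stage. For the passage from ``$\tau=0$ locally'' to ``$\tau=0$ on all of $M$'' the paper then appeals explicitly to the real analyticity of biharmonic submanifolds, which is presumably what your phrase ``unique continuation'' stands for.

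One small caution: your parenthetical alternative for globalizing (``re-running the maximum principle, since a constant is still a local max at each of its points'') does not work as written. A boundary point of the set $\{u=c\}$ need not be a local maximum of $u$ unless you already know $u\le c$ nearby, which is exactly what is in question. It is the analyticity (equivalently, unique continuation for the biharmonic system) that actually carries the propagation, and this deserves to be named rather than offered as one option among two.
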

\begin{proof}
If $\nr\tau^2$ admits a local maximum at some point,
$\Dl\nr\tau^2\ge0$ holds at the point.
Therefore, we have
$\nr{\al_\tau}^2=0$, which implies that $\tau=0$. Thus, the local maximum must be $0$, so it hold that $\tau\equiv0$, locally.
By real analyticity of biharmonic submanifolds, $\tau$ must
be $0$.
\end{proof}

\section{An over-determined system of ODE}
\label{section:BH-hypersurface-in-CCS}

In this section, we assume that the ambient space $\ol M^{n+1}$ is a space form of sectional curvature $K$, and $M^n$ is a biharmonic hypersurface in $\ol M$.

We can study more precisely
biharmonic hypersurfaces
since we can diagonalize the second fundamental form.
Indeed, by taking an orthonormal frame consisting of unit principal curvature vectors
$\{v_i\}$,
we can diagonalize the second fundamental form as
$\al(v_i,v_j)=\lm_i\dl_{ij}$.
Then, by
\eqref{eq:bhmfd-R-top}, we have
\begin{subequations}
\begin{align}
\label{eq:base-n}
&\Dl\tau+\nr\al^2\tau-nK\tau=0,
\\
\label{eq:base-t}
&\tau\cd_i\tau+2\al_{i\ell}\cd^\ell\tau
=(\tau+2\lm_i)\cd_i\tau=0\quad(\text{for all $1\le i\le n$}).
\end{align}
\end{subequations}

Therefore, in the case of $K\le0$, if we assume that
there is no principal curvature satisfying that
$\tau+2\lm_i=0$,
then $\tau$ must be constant, and then $M^n$ is minimal.
Thus, we have

\begin{proposition}
\label{proposition:tau+lm-n=0}
If $M^n$ is a non-minimal biharmonic submanifold of the space form $\ol M^{n+1}$ of constant curvature $K$ with $K\le0$,
$(-1/2)\tau$ is a principal curvature.
\end{proposition}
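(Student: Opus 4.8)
The plan is to prove the statement by contradiction: if $-\tfrac12\tau$ were not a principal curvature at some point, the overdetermined tangential equation \eqref{eq:base-t} would pin $\tau$ down to be locally constant near that point, and then real analyticity together with the normal equation \eqref{eq:base-n} and the hypothesis $K\le0$ would force $\tau\equiv0$ on $M$, contradicting non-minimality.

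Concretely, I would introduce the set
\[
B:=\{\,p\in M\mid -\tfrac12\tau(p)\ \text{is not a principal curvature of }M\text{ at }p\,\}.
\]
Writing $A$ for the shape operator of $M$ (so that the $\lm_i$ are the eigenvalues of $A$ and $\tau=\tr A$), the condition defining $B$ is $\det\bigl(A+\tfrac12\tau\,\mathrm{id}\bigr)\ne0$, and since the left-hand side is a smooth function of $p$, the set $B$ is open in $M$. At every point of $B$ one has $\tau+2\lm_i\ne0$ for all $i$, so \eqref{eq:base-t}, which reads $(\tau+2\lm_i)\,\cd_i\tau=0$, gives $\cd_i\tau=0$ for every $i$; hence $d\tau\equiv0$ on $B$, i.e.\ $\tau$ is locally constant on $B$.

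Now suppose $B\ne\emptyset$. Then $\tau$ is constant on a nonempty open subset of $M$, and since a biharmonic hypersurface of the real-analytic space form $\ol M^{n+1}$ is real-analytic (the same fact invoked in the proof of Proposition \ref{proposition:tau-is-nonconst}), $\tau$ is a real-analytic function on $M$ and is therefore constant on all of $M$ (we may assume $M$ connected, otherwise argue on each non-minimal component). Feeding $\Dl\tau=0$ into \eqref{eq:base-n} yields $(\nr\al^2-nK)\,\tau=0$ on $M$. If this constant value of $\tau$ is $0$, then $M$ is minimal, contrary to hypothesis. Otherwise $\nr\al^2-nK\equiv0$; but $\nr\al^2=\sum_i\lm_i^2\ge0$ and $nK\le0$, so this forces $K=0$ and $\al\equiv0$, whence $\tau=\tr\al\equiv0$, again a contradiction. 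Therefore $B=\emptyset$, which is exactly the assertion that $-\tfrac12\tau$ is a principal curvature at every point of $M$.

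I do not anticipate a serious obstacle: the argument is short and each step is routine. The only point requiring a little care is the passage from ``$\tau$ is constant on an open set'' to ``$\tau$ is constant on $M$'', for which one needs the real analyticity of biharmonic submanifolds of a real-analytic ambient space; as noted, this tool is already used earlier in the paper, so it is available here.
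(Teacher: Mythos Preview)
Your argument is correct and follows essentially the same route as the paper: from \eqref{eq:base-t} one sees that wherever $-\tfrac12\tau$ is not an eigenvalue, $d\tau=0$, so $\tau$ is locally constant there, and then non-minimality is contradicted. The paper compresses this into one sentence and invokes Proposition~\ref{proposition:tau-is-nonconst} for the last step, whereas you make the open set $B$ explicit and re-derive the ``constant $\tau$ implies minimal'' conclusion directly from \eqref{eq:base-n}; these are minor elaborations rather than a different approach.
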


Proposition \ref{proposition:tau+lm-n=0} is essentially important
to continue our arguments below to obtain main theorem.

\medskip
From now on, we assume that $M$ is
a biharmonic hypersurface, but not minimal.

Thus, in the case that
$K\le0$, the mean curvature is not constant.
In the case that $K\ge0$, we assume that the mean curvature is not constant.
We always assume that $n\ge2$.

Let
$\{\lm_i\}$ be the principal curvatures, and let us denote their unit principal curvature vectors by
$\{v_i\}$,
and put
$\tau:=\ssum\lm_i$.
In the following, all the subscripts of the tensor fields mean the ones with respect to not the local coordinates, but $\{v_i\}$.
For examples,
$\al_{ij}=\al(v_i,v_j)=\dl_{ij}\lm_i$.
And we denote the differentiation with respect to $v_i$ by $v_i[*]$.
Furthermore,
$
g(\cd_{v_i}v_j,v_k)
=v_i[g(v_j,v_k)]-g(v_j,\cd_{v_i}v_k)
=-g(v_j,\cd_{v_i}v_k)
$,
in particular, $g(\cd_{v_i}v_j,v_j)=0$,
which we will use frequently.

Note that $v_i$ is not uniquely determined when $\lm_i$ has multiplicity.
To select them suitably, we need the following formula.

\begin{lemma}
For all $i,j,k\le n$, we have
\begin{subequations}
\begin{align}
\label{eq:Codazzi-cd-al}
&\cd_i\al_{jk}=\dl_{jk}v_i[\lm_j]+(\lm_j-\lm_k)g(\cd_{v_i}v_j,v_k),\\
\label{eq:Codazzi-1}
&\begin{aligned}
\dl_{jk}v_i[\lm_j]&-\dl_{ik}v_j[\lm_i]\\
&=-(\lm_j-\lm_k)g(\cd_{v_i}v_j,v_k)+(\lm_i-\lm_k)g(\cd_{v_j}v_i,v_k).
\end{aligned}
\end{align}
\end{subequations}
\end{lemma}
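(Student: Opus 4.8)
The plan is to compute $\cd_i\al_{jk}$ directly from the definition of the covariant derivative of the second fundamental form, using the diagonalization $\al_{jk}=\dl_{jk}\lm_j$ together with the identity $g(\cd_{v_i}v_j,v_k)=-g(v_j,\cd_{v_i}v_k)$ recorded just before the lemma. First I would write
\begin{equation*}
(\cd_{v_i}\al)(v_j,v_k)=v_i[\al(v_j,v_k)]-\al(\cd_{v_i}v_j,v_k)-\al(v_j,\cd_{v_i}v_k).
\end{equation*}
The first term is $v_i[\dl_{jk}\lm_j]=\dl_{jk}v_i[\lm_j]$. For the remaining two terms I expand $\cd_{v_i}v_j=\sum_\ell g(\cd_{v_i}v_j,v_\ell)v_\ell$ and use $\al(v_\ell,v_k)=\dl_{\ell k}\lm_k$, so that $\al(\cd_{v_i}v_j,v_k)=g(\cd_{v_i}v_j,v_k)\lm_k$ and similarly $\al(v_j,\cd_{v_i}v_k)=g(\cd_{v_i}v_k,v_j)\lm_j=-g(\cd_{v_i}v_j,v_k)\lm_j$, where the last equality is exactly the antisymmetry identity. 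Collecting, $(\cd_{v_i}\al)(v_j,v_k)=\dl_{jk}v_i[\lm_j]-\lm_k g(\cd_{v_i}v_j,v_k)+\lm_j g(\cd_{v_i}v_j,v_k)=\dl_{jk}v_i[\lm_j]+(\lm_j-\lm_k)g(\cd_{v_i}v_j,v_k)$, which is \eqref{eq:Codazzi-cd-al}.

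For \eqref{eq:Codazzi-1} I would simply invoke the Codazzi equation $\cd_i\al_{jk}=\cd_j\al_{ik}$ valid in a space form (stated at the beginning of Section \ref{section:BHSM-in-CCS}), substitute \eqref{eq:Codazzi-cd-al} into both sides, and rearrange:
\begin{equation*}
\dl_{jk}v_i[\lm_j]+(\lm_j-\lm_k)g(\cd_{v_i}v_j,v_k)=\dl_{ik}v_j[\lm_i]+(\lm_i-\lm_k)g(\cd_{v_j}v_i,v_k),
\end{equation*}
so that moving the $\dl$-terms to the left and the $g$-terms to the right yields precisely \eqref{eq:Codazzi-1}.

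I do not expect any serious obstacle here; the only points requiring care are the correct bookkeeping of which index the derivative $v_i[\lm_j]$ carries and consistent use of the antisymmetry $g(\cd_{v_i}v_j,v_k)=-g(\cd_{v_i}v_k,v_j)$ when reindexing the second fundamental form. A minor subtlety worth flagging is that when $\lm_j=\lm_k$ the vectors $v_j,v_k$ are not canonically determined, but both formulas are manifestly well defined in that case because the offending coefficient $(\lm_j-\lm_k)$ vanishes; this is in fact the reason the lemma is needed for the later selection of the frame, as the paragraph preceding it indicates.
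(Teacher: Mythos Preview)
Your proof is correct and follows essentially the same approach as the paper: both compute $\cd_i\al_{jk}$ directly from the definition of the covariant derivative of $\al$, use the diagonalization $\al(v_\ell,v_k)=\dl_{\ell k}\lm_k$ together with the antisymmetry $g(\cd_{v_i}v_j,v_k)=-g(v_j,\cd_{v_i}v_k)$ to simplify the two correction terms, and then obtain \eqref{eq:Codazzi-1} by substituting \eqref{eq:Codazzi-cd-al} into the Codazzi equation $\cd_i\al_{jk}=\cd_j\al_{ik}$. Your added remark on well-definedness when $\lm_j=\lm_k$ is a helpful observation but is not needed for the argument itself.
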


\begin{proof}
Using $\al_{ij}=\dl_{ij}\lm_i$,
\begin{equation}
\begin{aligned}
\cd_i\al_{jk}&=v_i[\al_{jk}]-\al(\cd_{v_i}v_j,v_k)-\al(v_j,\cd_{v_i}v_k)\\
&=v_i[\dl_{jk}\lm_j]-\lm_kg(\cd_{v_i}v_j,v_k)-\lm_jg(v_j,\cd_{v_i}v_k)\\
&=\dl_{jk}v_i[\lm_j]-\lm_kg(\cd_{v_i}v_j,v_k)+\lm_jg(\cd_{v_i}v_j,v_k).
\end{aligned}
\end{equation}
Hence the first equation holds.
The second equation is derived from Codazzi equation: $\cd_i\al_{jk}=\cd_j\al_{ik}$.
\end{proof}

In case $k=j\ne i$ and $\lm_j=\lm_i$ in \eqref{eq:Codazzi-1}, we have $v_i[\lm_j]=0$.
It means that if $\lm_i$ has multiplicity $>1$, then $v_i[\lm_i]=0$.
We renumber the indices $\{i\}$ so that $\tau+2\lm_n=0$.
From \eqref{eq:base-t}, if $\lm_i\ne\lm_n$, then $v_i[\lm_n]=0$.
Therefore, if $\lm_n$ has multiplicity $>1$, then $v_i[\lm_n]=0$ for all $i\le n$, $\tau$ is constant on $M$, and $M$ is minimal.
Since $M$ is not minimal, we conclude that $\lm_n$ is simple.

For other $\lm_i$ with multiplicity $>1$, we reselect $v_i$ as follows.
For an index $i_0$, let $E$ be the tangent sub-bundle on $M$ generated by $\{v_i\mid\lm_i=\lm_{i_0}\}$.
Since $\lm_n$ is simple, $E$ has trivial normal connection $\cd^{E\bot}$ along each integral $v_n$ curve.
Therefore, we can choose orthonormal bases $\{v_i\}$ of $E$ so that $\cd^{E\bot}_{v_n}v_i=0$, i.e., $g(\cd_{v_n}v_i,v_j)=0$ for all $i$ and $j$ satisfying $\lm_i=\lm_j=\lm_{i_0}$.
Moreover, we have $v_i[\tau]=-2v_i[\lm_n]=0$ for $i<n$.

We summarize the above selection of $\{v_i\}$ as follows,
and, from now on, we assume the frame field $\{v_i\}$ satisfies the property.

\begin{lemma}
\label{lemma:selection-vi}
The principal curvature $\lm_n=-\tau/2$ is simple,
and it holds that $v_i[\tau]=0$ for any $i<n$.
Moreover, we can choose the frame field $\{v_i\}$ so that
$g(\cd_{v_n}v_i,v_j)=0$ if $\lm_i=\lm_j$.
\end{lemma}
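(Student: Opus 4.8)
The statement to prove is Lemma~\ref{lemma:selection-vi}. Let me plan the proof.

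\medskip

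The plan is to establish the three assertions in the order they are stated, each following from the Codazzi-type identities \eqref{eq:Codazzi-1} and the biharmonic tangential equation \eqref{eq:base-t}, combined with the normalization $\tau+2\lm_n=0$. First I would record the elementary consequence of \eqref{eq:Codazzi-1}: setting $k=j\ne i$ with $\lm_j=\lm_i$ gives $v_i[\lm_j]=0$, so any principal curvature of multiplicity greater than one is annihilated by every principal direction belonging to its own eigenspace. This is the key ``rigidity'' of repeated eigenvalues that will be used twice.

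\medskip

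Next I would prove that $\lm_n$ is simple. Suppose not. From \eqref{eq:base-t} we have $(\tau+2\lm_i)v_i[\tau]=0$ for each $i$; since $\tau+2\lm_n=0$, for any index $i$ with $\lm_i\ne\lm_n$ we get $v_i[\tau]=0$, i.e. $v_i[\lm_n]=0$. On the other hand, if $\lm_n$ has multiplicity greater than one, then by the observation of the previous paragraph $v_i[\lm_n]=0$ for every $i$ with $\lm_i=\lm_n$ as well. Hence $v_i[\lm_n]=0$ for all $i\le n$, so $\tau=-2\lm_n$ is locally constant; by Proposition~\ref{proposition:tau-is-nonconst} (or directly by \eqref{eq:base-n} when $K\le0$, and by the standing hypothesis on $\tau$ when $K\ge0$) this forces $M$ to be minimal, contradicting our standing assumption. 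Therefore $\lm_n$ is simple. The same computation, now that $\lm_n$ is simple, shows $v_i[\tau]=0$ for all $i<n$: indeed $\lm_i\ne\lm_n$ for $i<n$, so \eqref{eq:base-t} gives $v_i[\tau]=0$ directly.

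\medskip

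For the last assertion I would fix an eigenvalue $\lm_{i_0}$ of multiplicity $>1$ and let $E$ be the sub-bundle spanned by the corresponding principal directions. The point is that $E$, restricted to an integral curve of $v_n$, is a smooth vector bundle with the induced metric connection $\cd^{E\bot}_{v_n}$ (the tangential-to-$E$ part of $\cd_{v_n}$), and a metric connection on a rank-$r$ bundle over a one-dimensional base can always be trivialized by parallel transport: choose an orthonormal frame of $E$ at one point of each $v_n$-curve and extend it by solving the linear ODE $\cd^{E\bot}_{v_n}v_i=0$. This gives $g(\cd_{v_n}v_i,v_j)=0$ whenever $\lm_i=\lm_j=\lm_{i_0}$; doing this for every repeated eigenvalue yields the claimed frame. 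One should check that this reselection does not spoil the earlier choices — it only mixes vectors inside a single eigenspace, so $\al_{ij}=\dl_{ij}\lm_i$ and $v_i[\tau]=0$ for $i<n$ are untouched — and that $E$ genuinely has trivial normal connection along $v_n$-curves, which is where simplicity of $\lm_n$ enters: it guarantees $\lm_{i_0}\ne\lm_n$ on a neighborhood so that $E$ stays away from $v_n$ and remains a well-defined smooth sub-bundle there. The main obstacle is this last verification — making precise that $E$ is a smooth bundle along the $v_n$-curves and that the ODE defining the parallel frame is genuinely a first-order linear system with smooth coefficients (so solutions exist and stay orthonormal) — but it is a standard parallel-transport argument once simplicity of $\lm_n$ is in hand.
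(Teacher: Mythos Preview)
Your proposal is correct and follows essentially the same route as the paper: the paper also deduces $v_i[\lm_j]=0$ for $\lm_i=\lm_j$, $i\ne j$, from \eqref{eq:Codazzi-1}, combines this with \eqref{eq:base-t} to force $\tau$ constant if $\lm_n$ were repeated, and then parallel-transports an orthonormal frame of each repeated eigenspace $E$ along the $v_n$-curves to achieve $g(\cd_{v_n}v_i,v_j)=0$ when $\lm_i=\lm_j$. Your added remarks on why the reselection preserves the diagonal form of $\al$ and why simplicity of $\lm_n$ keeps $E$ a smooth sub-bundle along $v_n$-curves are fine elaborations of points the paper leaves implicit.
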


Moreover, the chosen vector fields $v_i$ have the following good property.

\begin{lemma}
\label{lemma:cdvnvi=0}
For any $i\le n$, it holds that
\begin{equation}
\cd_{v_n}v_i=0.
\end{equation}
\end{lemma}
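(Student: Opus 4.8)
The plan is to prove $\cd_{v_n}v_i=0$ for all $i\le n$ by showing that all the relevant structure coefficients $g(\cd_{v_n}v_i,v_j)$ vanish. Since $g(\cd_{v_n}v_i,v_i)=0$ automatically, and since by Lemma \ref{lemma:selection-vi} we already have $g(\cd_{v_n}v_i,v_j)=0$ whenever $\lm_i=\lm_j$, the remaining task is to treat pairs $i\ne j$ with $\lm_i\ne\lm_j$ (including the case where one of the indices is $n$). First I would invoke the Codazzi relation \eqref{eq:Codazzi-1} with one of the three indices equal to $n$ and exploit the facts established just above: $\lm_n$ is simple, $v_i[\tau]=0$ for $i<n$, hence $v_i[\lm_n]=0$ for $i<n$, and from \eqref{eq:base-t} also $v_i[\lm_j]=0$ is constrained for various index combinations.

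The key computation is to plug suitable index choices into \eqref{eq:Codazzi-1}. Taking $i<n$, $j=n$, $k=n$ gives $v_i[\lm_n]=-(\lm_n-\lm_n)g(\cd_{v_i}v_n,v_n)+(\lm_i-\lm_n)g(\cd_{v_n}v_i,v_n)$; the left side is $0$ and the first term on the right is $0$, so $(\lm_i-\lm_n)g(\cd_{v_n}v_i,v_n)=0$, and since $\lm_i\ne\lm_n$ (as $\lm_n$ is simple) we conclude $g(\cd_{v_n}v_i,v_n)=0=g(\cd_{v_n}v_n,v_i)$ for all $i<n$. This handles everything involving the index $n$. Next, take $i=n$, $j,k<n$ with $j\ne k$: \eqref{eq:Codazzi-1} becomes $\dl_{jk}v_n[\lm_j]-\dl_{nk}v_j[\lm_n]=-(\lm_j-\lm_k)g(\cd_{v_n}v_j,v_k)+(\lm_n-\lm_k)g(\cd_{v_j}v_n,v_k)$. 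Since $k<n$ the left side vanishes ($\dl_{jk}=0$, $\dl_{nk}=0$), and by the previous step $g(\cd_{v_j}v_n,v_k)=-g(\cd_{v_j}v_k,v_n)$—so I would also derive $g(\cd_{v_j}v_n,v_k)=0$ for $j,k<n$, perhaps by applying \eqref{eq:Codazzi-1} once more with indices $j,n,k$ and using that $\cd^{E\bot}$-type relations and the simplicity of $\lm_n$ force $g(\cd_{v_j}v_n,v_k)=0$ whenever $\lm_j\ne\lm_k$. With that term killed, I get $(\lm_j-\lm_k)g(\cd_{v_n}v_j,v_k)=0$, and when $\lm_j\ne\lm_k$ this yields $g(\cd_{v_n}v_j,v_k)=0$; when $\lm_j=\lm_k$ it is already zero by Lemma \ref{lemma:selection-vi}.

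Assembling: $g(\cd_{v_n}v_i,v_j)=0$ for all pairs $i,j\le n$, hence $\cd_{v_n}v_i=0$. The \textbf{main obstacle} I anticipate is the bookkeeping in the mixed case $j,k<n$, $\lm_j\ne\lm_k$: one needs $g(\cd_{v_j}v_n,v_k)=0$ as an input, which is \emph{not} the same as the already-known vanishing of $g(\cd_{v_n}v_j,v_k)$, so I must be careful to extract it from a fresh instance of the Codazzi identity rather than assume it. The clean way is: in \eqref{eq:Codazzi-1} put the first index $=j$, second $=n$, third $=k$ (with $j\ne n$, $k\ne n$, $j\ne k$): the left side is $\dl_{nk}v_j[\lm_n]-\dl_{jk}v_n[\lm_j]=0$, giving $-(\lm_n-\lm_k)g(\cd_{v_j}v_n,v_k)+(\lm_j-\lm_k)g(\cd_{v_n}v_j,v_k)=0$; combined with the first-step result that kills cross terms with index $n$ appropriately, and with Lemma \ref{lemma:selection-vi}, this closes the argument. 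I would write the proof as a short sequence of these three substitutions, each followed by a one-line deduction using simplicity of $\lm_n$ or the frame choice.
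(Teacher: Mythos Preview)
Your Step~1 is fine, but the argument collapses at the ``mixed'' case $j,k<n$ with $\lm_j\ne\lm_k$. The two substitutions you propose in \eqref{eq:Codazzi-1}, namely $(i,j,k)=(n,j,k)$ and $(i,j,k)=(j,n,k)$, yield \emph{the same} relation: since \eqref{eq:Codazzi-1} encodes $\cd_i\al_{jk}=\cd_j\al_{ik}$, swapping the first two slots merely changes the sign of both sides. Concretely, writing $a=g(\cd_{v_n}v_j,v_k)$ and $b=g(\cd_{v_j}v_n,v_k)$, either substitution gives only
\[
(\lm_j-\lm_k)\,a=(\lm_n-\lm_k)\,b,
\]
and the remaining Codazzi instance $(i,j,k)=(j,k,n)$ introduces $c=g(\cd_{v_k}v_n,v_j)$ and is linearly dependent on the other two. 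So Codazzi plus Lemma~\ref{lemma:selection-vi} leaves a one--parameter family of solutions $(a,b,c)$ and cannot force $a=0$. This is not a bookkeeping issue: Codazzi holds for \emph{every} hypersurface, and there is no reason a generic hypersurface with one distinguished simple eigenvalue should satisfy $\cd_{v_n}v_j=0$.

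The missing ingredient is genuinely analytic, not algebraic: the paper differentiates the tangential biharmonic equation \eqref{eq:base-t} to obtain $(\tau+2\lm_i)\cd_i\cd_j\tau+2\cd_n\tau\,\cd_n\al_{ij}+\cd_i\tau\,\cd_j\tau=0$, and from the symmetry in $i,j$ together with $\cd_n\tau\ne0$ deduces $\cd_n\al_{ij}=0$ when $\lm_i\ne\lm_j$. Feeding this into \eqref{eq:Codazzi-cd-al} then gives $(\lm_i-\lm_j)g(\cd_{v_n}v_i,v_j)=0$ directly. In other words, the biharmonic equation is used a second time (beyond what is packaged in Lemma~\ref{lemma:selection-vi}), and your plan omits this step. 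Note also that $g(\cd_{v_j}v_n,v_k)=0$ is established in the paper \emph{after} the present lemma (in Lemma~\ref{lemma:beta-mu}), using $\cd_{v_n}v_j=0$ as input, so invoking it here would be circular.
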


\begin{proof}
We consider
the covariant differentiation of
\eqref{eq:base-t}.
\begin{equation}
\label{eq:base-t-d}
\begin{aligned}
0&=\cd_j(\tau\cd_i\tau+2\al_{i\ell}\cd^\ell\tau)\\
&=\cd_j\tau\cd_i\tau+\tau\cd_j\cd_i\tau
+2\cd_j\al_{i\ell}\cd^\ell\tau+2\al_{i\ell}\cd_j\cd^\ell\tau\\
&=\cd_j\tau\cd_i\tau+\tau\cd_j\cd_i\tau
+2\cd_j\al_{in}\cd_n\tau+2\lm_i\cd_j\cd_i\tau\\
&=(\tau+2\lm_i)\cd_i\cd_j\tau+2\cd_n\tau\cd_n\al_{ij}+\cd_i\tau\cd_j\tau.
\end{aligned}
\end{equation}

Exchanging $i$ and $j$ in \eqref{eq:base-t-d}, and taking the difference between them,
we have $(\lm_i-\lm_j)\cd_i\cd_j\tau=0$.
Assume that $\lm_i\ne\lm_j$.
Then it holds that $\cd_i\cd_j\tau=0$.
Substituting it into \eqref{eq:base-t-d}, we have $\cd_n\al_{ij}=0$,
because $\cd_i\tau=0$ or $\cd_j\tau=0$ and $\cd_n\tau\ne0$.
Substitute it into \eqref{eq:Codazzi-cd-al} replaced $i,j,k$ by $n,i,j$.
Then, we have $0=(\lm_i-\lm_j)g(\cd_{v_n}v_i,v_j)$.
Therefore, $g(\cd_{v_n}v_i,v_j)=0$ if $\lm_i\ne\lm_j$.

On the other hand, $g(\cd_{v_n}v_i,v_j)=0$ if $\lm_i=\lm_j$ by Lemma \ref{lemma:selection-vi}.
Thus, we have $g(\cd_{v_n}v_i,v_j)=0$ for any $i,j\le n$.
We also have
\begin{equation}
\cd_n\al_{ij}=0
\end{equation}
for $i,j\le n$ and $i\ne j$, by \eqref{eq:Codazzi-cd-al}.
\end{proof}

Since $\tau$ is not constant,
on a neighborhood of a point satisfying that $d\tau\ne0$,
the set $\tau=(\text{a constant})$ is a hypersurface of $M$.

\begin{definition}
We call each $F$ defined by $\tau=(\text{a constant})$ {\em a characteristic hypersurface} of $M$.
\end{definition}

For $i<n$, \eqref{eq:base-t} implies that $v_i[\tau]=0$, because $\lm_i\ne\lm_n=-\tau/2$.
Therefore, the set $\{v_i\mid1\le i<n\}$ is a locally defined orthonormal frame field of the tangent bundle of $F$.

Moreover, every $v_i$ is a principal curvature vector filed of $F$ as follows.

\begin{lemma}
\label{lemma:beta-mu}
Every vector $v_i$ $(i<n)$ is a principal curvature vector of hypersurface $F$ in $M$.
We denote by $\beta$ the second fundamental form, and by $\mu_i$ the principal curvature for the direction $v_i$.
Then, it holds that, for $i,j<n$,
\begin{equation}
\label{eq:beta-diag}
\beta(v_i,v_j)=g(\cd_{v_i}v_j,v_n)=\dl_{ij}\mu_i,
\quad\cd_{v_i}v_n=-\mu_iv_i.
\end{equation}
\end{lemma}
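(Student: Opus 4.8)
The statement to prove is Lemma~\ref{lemma:beta-mu}, which asserts that the characteristic hypersurface $F$ has the vectors $v_i$ $(i<n)$ as principal curvature vectors, with the second fundamental form $\beta$ of $F$ in $M$ given by $\beta(v_i,v_j)=g(\cd_{v_i}v_j,v_n)=\dl_{ij}\mu_i$, and $\cd_{v_i}v_n=-\mu_iv_i$.

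My plan is as follows. Since $F=\{\tau=\text{const}\}$ and $v_i[\tau]=0$ for $i<n$ (this is \eqref{eq:base-t} together with $\lm_i\neq\lm_n$, as already noted), the $\{v_i\}_{i<n}$ form an orthonormal frame of $TF$, and $v_n$ is a unit normal to $F$ in $M$ (it is orthogonal to each $v_i$ and $\cd_{v_n}\tau\neq0$). So the second fundamental form of $F$ in $M$ in the direction $v_n$ is exactly $\beta(v_i,v_j)=g(\cd_{v_i}v_j,v_n)$ for $i,j<n$; this is immediate from the definition of the shape operator, noting that $g(\cd_{v_i}v_j,v_n)$ is symmetric in $i,j$ because $[v_i,v_j]$ is tangent to $F$ (as $F$ is a submanifold and $v_i,v_j\in TF$), hence $g(\cd_{v_i}v_j-\cd_{v_j}v_i,v_n)=g([v_i,v_j],v_n)=0$. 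The content of the lemma is therefore the \emph{diagonality}: $g(\cd_{v_i}v_j,v_n)=0$ whenever $i\neq j$, $i,j<n$.

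The key step is to establish this diagonality, and I expect it to come out of the Codazzi relation \eqref{eq:Codazzi-cd-al} specialized appropriately, combined with the facts already proved in Lemma~\ref{lemma:cdvnvi=0}. Concretely: apply \eqref{eq:Codazzi-cd-al} (or rather the Codazzi symmetry $\cd_i\al_{jk}=\cd_j\al_{ik}$) with indices chosen so that $k=n$. For $i\neq j$, $i,j<n$, \eqref{eq:Codazzi-cd-al} with $(i,j,k)\mapsto(i,j,n)$ gives $\cd_i\al_{jn}=(\lm_j-\lm_n)g(\cd_{v_i}v_j,v_n)$ (the $\dl_{jn}$ term drops since $j<n$), and with $(i,j,k)\mapsto(i,n,j)$ gives $\cd_i\al_{nj}=(\lm_n-\lm_j)g(\cd_{v_i}v_n,v_j)$. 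Since $\al$ is symmetric these are equal up to the obvious relabelling, so I get a relation tying $g(\cd_{v_i}v_j,v_n)$ to $g(\cd_{v_i}v_n,v_j)=-g(\cd_{v_i}v_j,v_n)$ — not yet a contradiction by itself. The real leverage is the Codazzi symmetry in the \emph{first} two slots: compute $\cd_i\al_{jn}$ and $\cd_j\al_{in}$ and set them equal. This yields $(\lm_j-\lm_n)g(\cd_{v_i}v_j,v_n)=(\lm_i-\lm_n)g(\cd_{v_j}v_i,v_n)$. Combined with the symmetry $g(\cd_{v_i}v_j,v_n)=g(\cd_{v_j}v_i,v_n)$ established above, and the fact that $\lm_i\neq\lm_n$, $\lm_j\neq\lm_n$ while $(\lm_j-\lm_n)\neq(\lm_i-\lm_n)$ when $\lm_i\neq\lm_j$, one concludes $g(\cd_{v_i}v_j,v_n)=0$ for $i\neq j$ with $\lm_i\neq\lm_j$; and for $\lm_i=\lm_j$ (with $i\neq j$) the relation is vacuous, so I must instead invoke the selection of the frame from Lemma~\ref{lemma:selection-vi}/\ref{lemma:cdvnvi=0}, or run a further Codazzi argument within the eigenspace. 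The main obstacle is precisely this degenerate case $\lm_i=\lm_j$: I will need to handle it using $\cd_{v_n}v_i=0$ (Lemma~\ref{lemma:cdvnvi=0}) to propagate information, or by a direct computation of $v_n\!\left[g(\cd_{v_i}v_j,v_n)\right]$ along the $v_n$-curves.

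Finally, once diagonality is in hand, write $\mu_i:=g(\cd_{v_i}v_i,v_n)$; then $\beta(v_i,v_j)=\dl_{ij}\mu_i$ is exactly the claim that $v_i$ is a principal curvature vector of $F$. The last identity $\cd_{v_i}v_n=-\mu_iv_i$ follows by expanding $\cd_{v_i}v_n$ in the frame $\{v_1,\dots,v_n\}$: its $v_n$-component is $g(\cd_{v_i}v_n,v_n)=0$, its $v_j$-component ($j<n$, $j\neq i$) is $g(\cd_{v_i}v_n,v_j)=-g(\cd_{v_i}v_j,v_n)=-\beta(v_i,v_j)=0$ by diagonality, and its $v_i$-component is $g(\cd_{v_i}v_n,v_i)=-g(\cd_{v_i}v_i,v_n)=-\mu_i$. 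This is routine once the diagonality is secured, so the whole proof hinges on the Codazzi computation and the treatment of repeated principal curvatures.
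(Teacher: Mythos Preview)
Your treatment of the non-degenerate case $\lm_i\ne\lm_j$ is correct, and the final paragraph deriving $\cd_{v_i}v_n=-\mu_iv_i$ from diagonality is fine. But the case $\lm_i=\lm_j$ with $i\ne j$ is a genuine gap that your proposed remedies do not close. A Codazzi relation with all three indices in the same eigenspace carries no information about $g(\cd_{v_i}v_j,v_k)$, since the factor $\lm_j-\lm_k$ in \eqref{eq:Codazzi-cd-al} annihilates that term. And propagating $\beta_{ij}$ along the $v_n$-curve yields, for $i\ne j$, only the Riccati-type relation $v_n[\beta_{ij}]=\ssum_{\ell<n}\beta_{i\ell}\beta_{\ell j}$ (the curvature contribution $g(R(v_n,v_i)v_j,v_n)$ vanishes off-diagonally), which admits non-diagonal solutions and hence does not force $\beta_{ij}=0$.

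The paper sidesteps the case distinction entirely by specializing \eqref{eq:Codazzi-1} differently: take $i=n$ and $j,k<n$ with $j\ne k$. Both Kronecker terms on the left vanish, and on the right the term $-(\lm_j-\lm_k)g(\cd_{v_n}v_j,v_k)$ is zero by Lemma~\ref{lemma:cdvnvi=0}, \emph{whether or not} $\lm_j=\lm_k$. What remains is $(\lm_n-\lm_k)g(\cd_{v_j}v_n,v_k)=0$, and since $\lm_n$ is simple this gives $g(\cd_{v_j}v_n,v_k)=-\beta(v_j,v_k)=0$ for all $j\ne k<n$ at once. The moral: put the index $n$ in one of the first two slots of the Codazzi identity rather than the third, so that $\cd_{v_n}v_j=0$ can be invoked directly and the multiplicity issue never arises.
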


\begin{proof}
We consider \eqref{eq:Codazzi-1} with $i=n$, $j,k<n$ and $j\ne k$.
We have
\begin{equation}
0=-(\lm_j-\lm_k)g(\cd_{v_n}v_j,v_k)+(\lm_n-\lm_k)g(\cd_{v_j}v_n,v_k).
\end{equation}
Since $\cd_{v_n}v_j=0$ by Lemma \ref{lemma:cdvnvi=0} and $\lm_k\ne\lm_n$,
we have
\begin{equation}
0=g(\cd_{v_j}v_n,v_k)=-g(v_n,\cd_{v_j}v_k)=-\beta(v_j,v_k).
\end{equation}

It also implies that $\cd_{v_j}v_n$ is parallel to $v_j$.
Therefore, $\cd_{v_j}v_n=g(\cd_{v_j}v_n,v_j)v_j=-\mu_jv_j$.
\end{proof}

Now, we derive an over-determined ODE.

\begin{proposition}
\label{proposition:ODE-0}
Let $M^n$ be a biharmonic hypersurface of the space form $\ol M^{n+1}$ of sectional curvature $K\le0$.
Then, $\lm_i$ and $\mu_i$ satisfy the following ordinary differential equations:
\begin{subequations}
\label{eq:ODE-0}
\begin{align}
\label{eq:ODEtau}
&-\tau''+\tau'\ssum_{i<n}\mu_i
+\tau(\frac14\tau^2-nK+\ssum_{i<n}\lm_i^2)
=0,\\
\label{eq:ODElm}
&(\lm_i)'=(\frac12\tau+\lm_i)\mu_i,\\
\label{eq:ODEmu}
&(\mu_i)'=\mu_i^2-\frac12\tau\lm_i+K.
\end{align}
\end{subequations}
Here, $\tau$ is a function of $\{\lm_i\}$ defined by
$\tau=(2/3)\ssum_{i<n}\lm_i$,
and $*'$ is the differentiation $v_n[*]$.
\end{proposition}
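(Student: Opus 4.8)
The plan is to derive the three ODEs by differentiating the structure equations along the $v_n$-curves, using the vanishing $\cd_{v_n}v_i=0$ from Lemma \ref{lemma:cdvnvi=0} to turn covariant derivatives in the $v_n$-direction into ordinary derivatives. First I would establish \eqref{eq:ODEmu}: apply the Gauss/Riccati-type equation for the hypersurface $F\subset M$, or equivalently compute $v_n[\mu_i]$ directly. Since $\cd_{v_i}v_n=-\mu_iv_i$ and $\cd_{v_n}v_i=0$, the curvature $g(R(v_n,v_i)v_n,v_i)$ of $M$ can be expanded two ways: on one hand via $\cd$, giving $v_n[\mu_i]-\mu_i^2$ up to sign; on the other hand via the Gauss equation of $M\subset\ol M$, which gives $K+\al(v_n,v_n)\al(v_i,v_i)-\al(v_n,v_i)^2=K+\lm_n\lm_i=K-\tfrac12\tau\lm_i$ (using $\lm_n=-\tau/2$ and $\al_{ni}=0$ from Lemma \ref{lemma:cdvnvi=0}). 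Matching the two yields \eqref{eq:ODEmu}.

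Next I would obtain \eqref{eq:ODElm} from the Codazzi equation \eqref{eq:Codazzi-cd-al}. Taking $i,j,k$ there to read off $\cd_n\al_{ii}$: we have $\cd_n\al_{ii}=v_n[\lm_i]+(\lm_i-\lm_i)g(\cd_{v_n}v_i,v_i)=v_n[\lm_i]=(\lm_i)'$. On the other hand the Codazzi symmetry equates this with an expression involving $\cd_i\al_{in}$, which by \eqref{eq:Codazzi-cd-al} equals $(\lm_i-\lm_n)g(\cd_{v_i}v_i,v_n)=(\lm_i+\tfrac12\tau)(-\beta(v_i,v_i))$ — wait, one must track the sign: $g(\cd_{v_i}v_i,v_n)=-g(v_i,\cd_{v_i}v_n)=\mu_i$ by \eqref{eq:beta-diag}. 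So $\cd_i\al_{in}=(\lm_i-\lm_n)\mu_i=(\lm_i+\tfrac12\tau)\mu_i$, and Codazzi gives $(\lm_i)'=(\tfrac12\tau+\lm_i)\mu_i$, which is \eqref{eq:ODElm}. (The relation $\tau=(2/3)\ssum_{i<n}\lm_i$ is just $\tau=\ssum_{i\le n}\lm_i=\lm_n+\ssum_{i<n}\lm_i=-\tfrac12\tau+\ssum_{i<n}\lm_i$, solved for $\tau$.)

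Finally \eqref{eq:ODEtau} comes from the normal biharmonic equation \eqref{eq:base-n}, namely $\Dl\tau+\nr\al^2\tau-nK\tau=0$, where $\nr\al^2=\ssum_i\lm_i^2=\tfrac14\tau^2+\ssum_{i<n}\lm_i^2$. The work is to expand the rough Laplacian $\Dl\tau=-\ssum_i v_i[v_i[\tau]]+\ssum_i(\cd_{v_i}v_i)[\tau]$ using that $v_i[\tau]=0$ for $i<n$ (Lemma \ref{lemma:selection-vi}): the only surviving second-derivative term is $-v_n[v_n[\tau]]=-\tau''$, and $\cd_{v_i}v_i$ contributes $(\cd_{v_i}v_i)[\tau]=g(\cd_{v_i}v_i,v_n)v_n[\tau]=\mu_i\tau'$ for $i<n$ while $\cd_{v_n}v_n=0$. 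Hence $\Dl\tau=-\tau''+\tau'\ssum_{i<n}\mu_i$, and substituting into \eqref{eq:base-n} gives \eqref{eq:ODEtau}.

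The main obstacle I expect is bookkeeping of signs and of which connection coefficients $g(\cd_{v_i}v_j,v_k)$ genuinely vanish versus which are $\mu$'s — in particular correctly using that $\al_{ni}=0$ for $i<n$ (so $v_n$ is a principal direction with curvature $\lm_n=-\tau/2$, consistent with the renumbering) and that $\cd_{v_n}v_i=0$ kills all derivative-of-frame terms in the $v_n$-direction. The curvature computation for \eqref{eq:ODEmu} requires care because one is comparing the intrinsic curvature of $M$ (Riccati equation for $F\subset M$) with the Gauss equation of $M\subset\ol M$; getting the two curvature conventions to agree is where a sign error is most likely to creep in. Once those are pinned down, all three equations follow from the already-established lemmas by direct substitution, with no further geometric input needed.
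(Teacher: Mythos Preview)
Your proposal is correct and follows essentially the same route as the paper: Codazzi with indices $(n,i,i)$ for \eqref{eq:ODElm}, the Gauss equation for $M\subset\ol M$ combined with the curvature identity along $v_n$ (using $\cd_{v_n}v_i=0$ and $\cd_{v_i}v_n=-\mu_iv_i$) for \eqref{eq:ODEmu}, and the frame expansion of $\Dl\tau$ using $v_i[\tau]=0$ for $i<n$ and $\cd_{v_n}v_n=0$ for \eqref{eq:ODEtau}. The only quibble is that $\al_{ni}=0$ comes from the diagonalization of $\al$ in the principal frame, not from Lemma~\ref{lemma:cdvnvi=0}; otherwise the argument matches the paper's.
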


\begin{proof}
For \eqref{eq:ODEtau}, we calculate $\cd_i\cd_j\tau$ for $i,j\le n$.
\begin{equation}
\begin{aligned}
\cd_i\cd_j\tau&=(\cd_{v_i}(\cd\tau))(v_j)
=v_i[v_j[\tau]]-\cd_{\cd_{v_i}v_j}\tau\\
&=v_i[v_j[\tau]]-\ssum_{k\le n}g(\cd_{v_i}v_j,v_k)\cd_{v_k}\tau\\
&=v_i[v_j[\tau]]-g(\cd_{v_i}v_j,v_n)\cd_n\tau.
\end{aligned}
\end{equation}
Therefore, $\cd_i\cd_i\tau=-\mu_i\tau'$ for all $i<n$, and $\cd_n\cd_n\tau=\tau''$.
Substituting it into
\eqref{eq:base-n}, we have an expression of $\Dl\tau$:
\begin{equation}
\Dl\tau=-\cd_n\cd_n\tau-\ssum_{i<n}\cd_i\cd_i\tau
=-\tau''+\tau'\ssum_{i<n}\mu_i.
\end{equation}
Thus, we have
\begin{equation}
\begin{aligned}
0&=\Dl\tau+\nr\al^2\tau-nK\tau\\
&=-\tau''+\tau'\ssum_{i<n}\mu_i
+\tau(\frac14\tau^2-nK+\ssum_{i<n}(\lm_i)^2),
\end{aligned}
\end{equation}
which is \eqref{eq:ODEtau}.

For $v_n[\lm_i]$, we use \eqref{eq:Codazzi-1} with $j=n$, $k=i<n$.
We have, using \eqref{lemma:beta-mu},
\begin{equation}
\begin{aligned}
&-v_n[\lm_i]=-(\lm_n-\lm_i)g(\cd_{v_i}v_n,v_i)
=(-\frac12\tau-\lm_i)\mu_i,
\end{aligned}
\end{equation}
which is \eqref{eq:ODElm}

For $v_n[\mu_i]$, differentiating the equation of the definition of $\mu_i$, we have
\begin{equation}
\begin{aligned}
v_n&[\mu_i]=v_n[g(\cd_{v_i}v_i,v_n)]
=g(\cd_{v_n}\cd_{v_i}v_i,v_n)+g(\cd_{v_i}v_i,\cd_{v_n}v_n)\\
&=g(R(v_n,v_i)v_i+\cd_{v_i}\cd_{v_n}v_i
+\cd_{[v_n,v_i]}v_i,v_n)
\,\,(\text{by }\cd_{v_n}v_n=0)\\
&=K+\lm_n\lm_i+g(\cd_{\cd_{v_n}v_i-\cd_{v_i}v_n}v_i,v_n)
\,\,(\text{by Gauss, }\cd_{v_n}v_i=0)\\
&=K-\frac12\tau\lm_i+\mu_ig(\cd_{v_i}v_i,v_n)
\,(\text{by}\,\cd_{v_n}v_i=0,\!\cd_{v_i}v_n=-\mu_iv_i)\\
&=K+\mu_i^2-\frac12\tau\lm_i,
\end{aligned}
\end{equation}
which is \eqref{eq:ODEmu}.
\end{proof}

In the case $K\le0$, note here that we can conclude non-existence of non-minimal biharmonic hypersurfaces if the overdetermined differential system \eqref{eq:ODE-0} has only solutions satisfying $\tau\equiv\text{const}$.

In the case $n=2$, we can prove by a different manner the B.-Y. Chen's theorem:

\begin{corollary} [Chen \cite{C1}, Jiang \cite{J})]
Every biharmonic submanifold in
the $3$ dimensional space form of non-positive sectional curvature is minimal.
\end{corollary}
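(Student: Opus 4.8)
The plan is to specialize the over-determined system of Proposition~\ref{proposition:ODE-0} to $n=2$ and to check that it has no solution with $\tau$ non-constant; by \eqref{eq:base-n} (equivalently, by Proposition~\ref{proposition:tau-is-nonconst}) this forces $M$ to be minimal. Since $\dim\ol M=3$, it suffices to treat the hypersurface case $\dim M=2$: for a curve the statement is classical, and in any case \eqref{eq:bhmfd-R-top} forces $\nr\tau^2$ to be constant along the curve, after which \eqref{eq:bhmfd-R-bot} gives $\nr{\cd\tau}^2+\nr\tau^4-K\nr\tau^2=0$, whence $\tau\equiv0$ when $K\le0$.

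So suppose, for a contradiction, that $M^2$ is biharmonic and non-minimal in $\ol M^3(K)$ with $K\le0$. As in the text, $\tau$ is then non-constant, so $U=\{d\tau\ne0\}\cap\{\tau\ne0\}$ is a non-empty open set on which Propositions~\ref{proposition:tau+lm-n=0} and~\ref{proposition:ODE-0} apply. On $U$ the principal curvatures are $\lm_2=-\tau/2$ and, since $\tau=(2/3)\lm_1$, also $\lm_1=(3/2)\tau$; they are automatically distinct because $\tau\ne0$, so the genericity hypotheses of the main theorem are vacuous here, and \eqref{eq:ODE-0} collapses to three equations in the two unknowns $\tau,\mu_1$ along the $v_n$-curves:
\[
\tau'=\frac43\tau\mu_1,\qquad \mu_1'=\mu_1^2-\frac34\tau^2+K,\qquad -\tau''+\tau'\mu_1+\frac52\tau^3-2K\tau=0 .
\]

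The next step is to eliminate the derivatives. Using the first equation to express $\tau''$ and then substituting the first two equations into the third, one obtains, after dividing through by $\tau\ne0$, the algebraic constraint $\mu_1^2=\frac{63}{32}\tau^2-\frac{15}{8}K$; because $K\le0$ and $\tau\ne0$ on $U$, this already forces $\mu_1\ne0$ on $U$. Differentiating this identity along $v_n$ and using $\tau'=\frac43\tau\mu_1$, $\mu_1'=\mu_1^2-\frac34\tau^2+K$, then dividing by $\mu_1\ne0$, yields a second algebraic constraint $\mu_1^2=\frac{27}{8}\tau^2-K$. Subtracting the two gives $\tau^2=-\frac{28}{45}K$, which is constant on $U$. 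If $K<0$ this contradicts $d\tau\ne0$ on $U$, and if $K=0$ it contradicts $\tau\ne0$ on $U$; either way $U=\emptyset$, contrary to its non-emptiness, so $M$ must be minimal.

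I do not anticipate a genuine obstacle: the computation is entirely mechanical, and the only point requiring attention is to certify at each stage that the quantity divided out is non-zero, which is exactly what the first constraint $\mu_1^2=\frac{63}{32}\tau^2-\frac{15}{8}K$ secures for $\mu_1$. What makes $n=2$ immediate is that, with $\lm_1$ determined by $\tau$, the three relations of \eqref{eq:ODE-0} are visibly over-determined in $(\tau,\mu_1)$.
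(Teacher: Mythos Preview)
Your argument is correct and follows essentially the same route as the paper: specialize the over-determined system \eqref{eq:ODE-0} to $n=2$, eliminate derivatives to obtain a first algebraic constraint between $\tau$ (equivalently $\lm_1=\tfrac32\tau$) and $\mu_1$, differentiate once more to obtain a second constraint, and conclude that $\tau$ is constant. The two computations match exactly after the change of variable $\lm_1=\tfrac32\tau$; your constraints $\mu_1^2=\tfrac{63}{32}\tau^2-\tfrac{15}{8}K$ and $\mu_1^2=\tfrac{27}{8}\tau^2-K$ are the paper's $14\lm_1^2-16\mu_1^2-30K=0$ and $3\lm_1^2-2\mu_1^2-2K=0$.

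There are two small differences worth noting. First, the paper handles the possibility $\mu_1=0$ by a case split (if $\mu_1\equiv0$ then \eqref{eq:ODElm} forces $\lm_1$ constant), whereas you dispose of it directly by reading off $\mu_1^2>0$ from the first constraint using $K\le0$ and $\tau\ne0$; this is a mild streamlining. Second, the paper's proof treats only the hypersurface case $n=2$, while you also dispatch the curve case via \eqref{eq:bhmfd-R-top}--\eqref{eq:bhmfd-R-bot}; that short paragraph is a genuine (and correct) addition, since the corollary as stated speaks of submanifolds rather than hypersurfaces.
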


\begin{proof}
Substituting $n=2$,
\eqref{eq:ODElm}, \eqref{eq:ODEmu} and $\tau'=(2/3)\lm_1'$ into \eqref{eq:ODEtau}, we have
\begin{equation}
(2/27)\lm_1\{14(\lm_1)^2-16(\mu_1)^2-9nK-12K\}=0.
\end{equation}
Since $\lm_1=(3/2)\tau$ is not constant,
\begin{equation}
\label{eq:n1=1,2nd}
14(\lm_1)^2-16(\mu_1)^2-9nK-12K=0.
\end{equation}
Substituting \eqref{eq:ODElm} and \eqref{eq:ODEmu} into
the equation
$\{14(\lm_1)^2-16(\mu_1)^2-9nK-12K\}'=0$, we have
\begin{equation}
-16\mu_1\{3(\lm_1)^2-2(\mu_1)^2-2K\}=0.
\end{equation}
If $\mu_1\equiv0$, by \eqref{eq:ODElm}, $\tau=(2/3)\lm_1$ must be constant. Thus, the submanifold is minimal.
If $\mu_1\ne0$ at a point, we have that
$3(\lm_1)^2-2(\mu_1)^2-2K=0$.
Then, together with \eqref{eq:n1=1,2nd}, $\lm_1$ is constant. We have done.
\end{proof}

\section{Solutions to the over-determined system of ODE}
\label{section:analysis-overdetermined-ODE}

In this section, we assume that $M^n$ is a biharmonic hypersurface in the space form $\ol M^{n+1}$ of sectional curvature $K$.

We will analyze the algebraic ordinary differential system \eqref{eq:ODE-0}.
In the following we only treat the case $i\le n-1$, and put $n_1:=n-1$.
And for the summations which will be treated in this section in
$i$, we always assume that $i$ run over the set $1\le i\le n_1$.
In this section, we will treat a more general setting that
it would occur that
$\lm_i=\lm_j$ for different $i$ and $j$.

We will denote by $\lm=(\lm_1,\cdots,\lm_{n_1})\in{\bf R}^{n_1}$,
and $\mu=(\mu_1,\cdots,\mu_{n_1})\in{\bf R}^{n_1}$.
The solutions to the ordinary differential system can be regarded
as real analytic maps on a neighborhood of the origin of
${\bf R}$ into ${\bf R}^{2n_1}=\{(\lm,\mu)\}$,
and
our $\tau$ is regarded as a function $\tau=(2/3)\ssum_{i\le n_1}\lm_i$ on ${\bf R}^{2n_1}$.

Let
$S\subset{\bf R}^{2n_1}$ be the set of all the $(\lambda,\mu)$
such that
the equation \eqref{eq:ODE-0} has a solution with
initial value $(\lambda,\mu)$.

\begin{lemma}
\label{lemma:P0}
Each $(\lm,\mu)\in S$ is a zero point of the following polynomial.
\begin{equation}
\label{eq:P0}
\begin{aligned}
P_0&:=-\frac43\ssum\lm_i(\mu_i)^2
-\frac23\tau\ssum(\mu_i)^2
+\frac49\ssum\mu_i\ssum{}\lm_i\mu_i
+\frac43\tau\ssum(\lm_i)^2\\
&\qquad
+\frac29\tau(\ssum\mu_i)^2
+\frac12\tau^3
-\frac23(2n+1)K\tau.
\end{aligned}
\end{equation}
\end{lemma}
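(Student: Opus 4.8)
The plan is to eliminate the derivative $\tau''$ from \eqref{eq:ODEtau} using the relation $\tau=(2/3)\ssum\lm_i$, so that only first-order data survive, and then recognize the resulting expression as $P_0$. Since $\tau'=(2/3)\ssum(\lm_i)'$, differentiating again gives $\tau''=(2/3)\ssum(\lm_i)''$. Now $(\lm_i)'=(\frac12\tau+\lm_i)\mu_i$ by \eqref{eq:ODElm}, so differentiating this and substituting \eqref{eq:ODElm}, \eqref{eq:ODEmu} (and $\tau'=(2/3)\ssum(\lm_j)'$) yields $(\lm_i)''$ purely in terms of $\{\lm_j,\mu_j\}$ with no surviving derivatives. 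Explicitly,
\begin{equation}
(\lm_i)''=(\tfrac12\tau'+(\lm_i)')\mu_i+(\tfrac12\tau+\lm_i)(\mu_i)',
\end{equation}
and into this one feeds $\tfrac12\tau'=\tfrac13\ssum(\tfrac12\tau+\lm_j)\mu_j$, then $(\lm_i)'=(\tfrac12\tau+\lm_i)\mu_i$, then $(\mu_i)'=\mu_i^2-\tfrac12\tau\lm_i+K$. Summing over $i$ and multiplying by $2/3$ gives a polynomial expression for $\tau''$.

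Next I would substitute this expression for $\tau''$, together with $\tau'=(2/3)\ssum(\tfrac12\tau+\lm_i)\mu_i = \ssum(\tfrac13\tau+\tfrac23\lm_i)\mu_i$, into \eqref{eq:ODEtau}:
\begin{equation}
-\tau''+\tau'\ssum_{i<n}\mu_i+\tau\bigl(\tfrac14\tau^2-nK+\ssum_{i<n}\lm_i^2\bigr)=0.
\end{equation}
The claim is that the left-hand side, after this substitution and full expansion, equals $-\tfrac23 P_0$ (or some nonzero scalar multiple of $P_0$); tracking the constant is the bookkeeping part. The terms of $P_0$ group naturally: the $\ssum\lm_i(\mu_i)^2$ and $\tau\ssum(\mu_i)^2$ pieces come from the $(\lm_i)'\mu_i$ and $(\tfrac12\tau+\lm_i)(\mu_i)'$ contributions to $\tau''$; the $\ssum\mu_i\ssum\lm_i\mu_i$ and $\tau(\ssum\mu_i)^2$ pieces come from the $\tfrac12\tau'\mu_i$ term and from $\tau'\ssum\mu_i$; the $\tau\ssum(\lm_i)^2$, $\tau^3$, and $K\tau$ pieces come from the undifferentiated bracket in \eqref{eq:ODEtau} together with the $K$ and $\lm$-linear parts of $(\mu_i)'$. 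Since $(\lm,\mu)\in S$ means there is an actual solution through $(\lm,\mu)$, the ODE \eqref{eq:ODEtau} holds at that point, so $P_0(\lm,\mu)=0$.

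The main obstacle is purely computational: correctly expanding the double sums that arise from $\tfrac12\tau'=\tfrac13\ssum(\tfrac12\tau+\lm_j)\mu_j$ appearing inside each $(\lm_i)''$ and then summed again over $i$, which produces terms like $\tau(\ssum\mu_j)^2$, $(\ssum\mu_j)(\ssum\lm_k\mu_k)$, and $(\ssum\lm_j\mu_j)\mu_i$ summed over $i$ — one must be careful that cross terms pair up with the right coefficients and that the $K$-dependent terms collect to exactly $-\tfrac23(2n+1)K\tau$ (note $nK$ from \eqref{eq:ODEtau} and the $K$'s from each $(\mu_i)'$, of which there are $n_1=n-1$ copies, must combine with the right weights). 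No conceptual difficulty is expected; the verification is a finite algebraic identity in the $2n_1$ variables $(\lm,\mu)$ and $K$, and it is enough to check it as a formal polynomial identity, independent of whether the $\lm_i$ are distinct.
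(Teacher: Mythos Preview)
Your proposal is correct and follows essentially the same route as the paper: compute $\tau'$ and $\tau''$ from $\tau=(2/3)\ssum\lm_i$ together with \eqref{eq:ODElm}, \eqref{eq:ODEmu}, substitute into \eqref{eq:ODEtau}, and read off $P_0$. The only cosmetic difference is that the paper differentiates the already-simplified expression $\tau'=\tfrac13\tau\ssum\mu_i+\tfrac23\ssum\lm_i\mu_i$ directly rather than summing $(\lm_i)''$, and in fact the left-hand side of \eqref{eq:ODEtau} after substitution equals $P_0$ on the nose (scalar factor $1$, not $-\tfrac23$).
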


\begin{proof}
Assume that
$(\lm,\mu)=(\lm(t),\mu(t))$ is a solution to \eqref{eq:ODE-0}.
Then,
\begin{equation}
\begin{aligned}
\tau'&=\frac23\ssum\lm_i'
=\frac23\ssum(\frac12\tau\mu_i+\lm_i\mu_i)
=\frac13\tau\ssum\mu_i+\frac23\ssum{}\lm_i\mu_i,\\
\tau''&=\frac13\tau'\ssum\mu_i+\frac13\tau\ssum{}\mu_i'
+\frac23\ssum\lm_i'\mu_i+\frac23\ssum\lm_i\mu_i'.
\end{aligned}
\end{equation}
Substituting this, \eqref{eq:ODElm} and \eqref{eq:ODEmu} into \eqref{eq:ODEtau},
we obtain the desired polynomial.
\end{proof}

Starting at $P_0$,
we determine the polynomial $P_k$ inductively as follows: Substituting the solution $(\lm(t),\mu(t))$ to
the ordinary differential equation of normal form,
\eqref{eq:ODElm}, \eqref{eq:ODEmu}
in the polynomial $P_k$, and differentiate it with respect to
$t$,
and substitute \eqref{eq:ODElm}, \eqref{eq:ODEmu} into $\lm_i'(t),\mu_i'(t)$.
Then, we obtain a polynomial in $\{\lm_i(t),\mu_i(t)\}$.
We define $P_{k+1}$, this polynomial.

\begin{proposition}
The set $S$ of all initial values of \eqref{eq:ODE-0} coincides with the algebraic manifold $\cap_{k=0}^\infty(P_k)^{-1}(0)$.
\end{proposition}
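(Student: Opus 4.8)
The plan is to prove the equality $S=\bigcap_{k=0}^\infty(P_k)^{-1}(0)$ by establishing the two inclusions separately. The $\subseteq$ direction is essentially already done: if $(\lm,\mu)\in S$, then there is a genuine analytic solution $(\lm(t),\mu(t))$ of \eqref{eq:ODE-0} with $(\lm(0),\mu(0))=(\lm,\mu)$, so by Lemma \ref{lemma:P0} we have $P_0(\lm(t),\mu(t))=0$ for all $t$ near $0$; differentiating repeatedly in $t$ and using \eqref{eq:ODElm}, \eqref{eq:ODEmu} to rewrite $\lm_i'$, $\mu_i'$, the very construction of $P_{k+1}$ shows that $P_{k+1}(\lm(t),\mu(t))=\frac{d}{dt}P_k(\lm(t),\mu(t))$ along the solution; hence inductively $P_k(\lm(t),\mu(t))\equiv 0$, and in particular $P_k(\lm,\mu)=0$ for every $k$.

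For the reverse inclusion $\supseteq$, suppose $(\lm,\mu)\in\bigcap_k(P_k)^{-1}(0)$. First solve the \emph{normal-form} system consisting only of \eqref{eq:ODElm} and \eqref{eq:ODEmu} with this initial value; since the right-hand sides are polynomial, hence smooth, there is a unique analytic solution $(\lm(t),\mu(t))$ on some interval around $0$, and $\tau(t):=\tfrac23\ssum\lm_i(t)$ is the associated function. What remains is to check that this solution also satisfies the extra equation \eqref{eq:ODEtau}, i.e.\ that the first-order system together with \eqref{eq:ODEtau} is consistent along this orbit. The key identity is the one just observed in the other direction: for \emph{any} solution of the normal-form system, $\frac{d}{dt}P_k(\lm(t),\mu(t))=P_{k+1}(\lm(t),\mu(t))$. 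Define $f(t):=P_0(\lm(t),\mu(t))$. Then $f^{(k)}(0)=P_k(\lm(0),\mu(0))=P_k(\lm,\mu)=0$ for all $k\ge0$, and since $f$ is real analytic (being a polynomial composed with an analytic curve), $f\equiv 0$ on the interval. Thus $P_0(\lm(t),\mu(t))=0$ identically.

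It remains to convert $P_0\equiv 0$ along the orbit back into \eqref{eq:ODEtau}. Here one reverses the computation in the proof of Lemma \ref{lemma:P0}: starting from the normal-form equations we computed $\tau'$ and $\tau''$ explicitly as polynomials in $\{\lm_i,\mu_i\}$, and $P_0$ was obtained precisely as the result of substituting these expressions, \eqref{eq:ODElm} and \eqref{eq:ODEmu} into the left side of \eqref{eq:ODEtau}. Hence, once $\tau$ is defined by $\tau=\tfrac23\ssum\lm_i$ and $(\lm,\mu)$ solves \eqref{eq:ODElm}--\eqref{eq:ODEmu}, the left side of \eqref{eq:ODEtau} equals $P_0(\lm(t),\mu(t))$ on the nose, so vanishing of $P_0$ along the orbit is literally equation \eqref{eq:ODEtau}. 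Therefore $(\lm(t),\mu(t))$ solves the full system \eqref{eq:ODE-0}, and $(\lm,\mu)\in S$.

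I expect the only real subtlety to be the bookkeeping in the middle step: one must be careful that ``differentiate $P_k$ along the solution and re-substitute'' genuinely produces a \emph{polynomial} $P_{k+1}$ that depends only on the point $(\lm(t),\mu(t))$ and not on the particular solution — this is clear because $\lm_i',\mu_i'$ are replaced by polynomials in $\lm,\mu$ — and that the resulting identity $\frac{d}{dt}P_k=P_{k+1}$ holds for every orbit of the normal-form system, which is what lets one apply it at an arbitrary $(\lm,\mu)$ before knowing it lies in $S$. Once that identity is in hand, the argument is just the standard fact that a real-analytic function all of whose derivatives at a point vanish is identically zero. No completeness or maximality of the solution interval is needed, since $S$ is defined in terms of germs of solutions at the origin.
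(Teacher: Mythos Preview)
Your proposal is correct and follows essentially the same approach as the paper: for $\supseteq$, solve the normal-form subsystem \eqref{eq:ODElm}--\eqref{eq:ODEmu}, observe that along any such orbit the left-hand side of \eqref{eq:ODEtau} equals $P_0(\lm(t),\mu(t))$ with $k$-th derivative $P_k(\lm(t),\mu(t))$, and conclude by real analyticity. Your write-up is simply more explicit about the bookkeeping (the identity $\frac{d}{dt}P_k=P_{k+1}$ along orbits and the reversal of Lemma~\ref{lemma:P0}) than the paper's brief version.
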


\begin{proof}
We proved that $S\subset\cap_{k=0}^\infty(P_k)^{-1}(0)$.
Conversely, let $(\lm,\mu)$ be a point of $\cap_{k=0}^\infty(P_k)^{-1}(0)$, and $(\lm(t),\mu(t))$ be the solution to the partial system \eqref{eq:ODElm}, \eqref{eq:ODEmu} with initial value $(\lm,\mu)$.
Then, the $k$-th derivative of the left hand side of \eqref{eq:ODEtau} vanishes for any $k\ge0$.
Since the solution $(\lm(t),\mu(t))$ is real analytic, it means that \eqref{eq:ODEtau} is satisfied.
\end{proof}

For the ODE \eqref{eq:ODE-0}, we can classify all solutions giving minimal hypersurfaces.

\begin{proposition}
All the solutions to \eqref{eq:ODE-0} satisfying that
$\tau\equiv0$ are classified as follows:

In the case that $K=0:$
\begin{equation}
\label{eq:lm-mu:K=0}
\begin{aligned}
\mu_i&=-\frac{1}{t+c_i},\;
\lm_i=\frac{a_i}{t+c_i},
\quad\text{or}\quad
\mu_i=0,\;
\lm_i=a_i.
\end{aligned}
\end{equation}

In the case that $K=-1:$
\begin{equation}
\label{eq:lm-mu:K=-1}
\begin{aligned}
&\mu_i=-\tanh(t+c_i),\;
\lm_i=\frac{a_i}{\cosh(t+c_i)},\\
&\text{or}\quad
\mu_i=\pm1,\;
\lm_i=a_ie^{\pm t}.
\end{aligned}
\end{equation}

In the case that $K=1:$
\begin{equation}
\label{eq:lm-mu:K=1}
\begin{aligned}
\mu_i&=\tan(t+c_i),\;
\lm_i=\frac{a_i}{\cos(t+c_i)}.
\end{aligned}
\end{equation}

Here,
the sum of all the $a_i$
corresponding to the same $c_i$ must be zero:
$\ssum_{\{i\,\mid\;c_i=c_k\}}a_i=0$ for every $k$.
(regarding $c_i=\infty$ for the singular solution in the case
$K=0$, and $c_i=\pm\infty$ in the case that
$K=-1$).
For example, if all the $c_i$ are different each other,
it must be that $a_i=0$ for all $i$.
If all $c_i$ are same, our condition is only that $\ssum_ia_i=0$.
\end{proposition}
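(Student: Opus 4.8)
The plan is to use that imposing $\tau\equiv0$ decouples \eqref{eq:ODE-0} entirely. Since $\tau\equiv0$ forces $\tau'\equiv\tau''\equiv0$, equation \eqref{eq:ODEtau} holds automatically, and \eqref{eq:ODElm}--\eqref{eq:ODEmu} reduce to the $n_1$ mutually independent pairs $(\lm_i)'=\lm_i\mu_i$, $(\mu_i)'=\mu_i^2+K$, subject only to the single algebraic side condition $\ssum_i\lm_i\equiv0$ (which is exactly $\tau=(2/3)\ssum_i\lm_i\equiv0$); conversely, any such family of pairs with $\ssum_i\lm_i\equiv0$ furnishes a solution of \eqref{eq:ODE-0} with $\tau\equiv0$. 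So the classification splits into (a) integrating one pair, and (b) deciding when the sum of the $\lm_i$ vanishes identically.

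For (a), I would first solve the Riccati equation $(\mu_i)'=\mu_i^2+K$, which does not involve $\lm_i$: the substitution $\mu_i=-w_i'/w_i$ linearises it to $w_i''+Kw_i=0$, so, up to a nonzero constant, $w_i$ is $t+c_i$ or $1$ (if $K=0$); one of $\cosh(t+c_i)$, $\sinh(t+c_i)$, $e^{t}$, $e^{-t}$ (if $K=-1$); or $\cos(t+c_i)$ (if $K=1$). These reproduce exactly the $\mu_i$ listed (the constants $0$ and $\pm1$ being the equilibria; the branch $\mu_i=-\coth(t+c_i)$ for $K=-1$ is treated below on the same footing as the $1/\cosh$ branch). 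Then $(\lm_i)'=\lm_i\mu_i=-\lm_i w_i'/w_i$ integrates to $\lm_i=a_i/w_i$, reproducing the tabulated expressions for $\lm_i$.

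For (b), I would group the indices according to the underlying function $1/w_i$ --- well defined up to a positive scalar and recorded by $c_i$, with the conventions $c_i=\infty$ for the equilibrium branch when $K=0$ and $c_i=\pm\infty$ for $\mu_i=\pm1$ when $K=-1$. Within a group all the $\lm_i$ are scalar multiples of one common function $f_c$ (namely $1/(t+c)$, the constant $1$, $1/\cosh(t+c)$, $e^{\pm t}$, or $1/\cos(t+c)$), so $\ssum_i\lm_i=\ssum_c\big(\ssum_{c_i=c}a_i\big)f_c$, and the assertion ``$\ssum_i\lm_i\equiv0$ precisely when $\ssum_{c_i=c}a_i=0$ for every $c$'' becomes the statement that the distinct functions $f_c$ are linearly independent over ${\bf R}$. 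The two special cases in the proposition (all $c_i$ distinct, forcing every $a_i=0$; all $c_i$ equal, forcing only $\ssum_ia_i=0$) then follow immediately.

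I expect this linear independence to be the only substantive point, and I would establish it by meromorphic continuation together with a growth comparison. For fixed $K$ and $c\ne c'$ the functions $f_c$, $f_{c'}$ have disjoint sets of simple poles in ${\bf C}$: the pole $-c$ for $K=0$; the poles $-c+\tfrac{i\pi}{2}+i\pi{\bf Z}$ for $1/\cosh(t+c)$ (and $-c+i\pi{\bf Z}$ for $1/\sinh(t+c)$); the poles $-c+\tfrac\pi2+\pi{\bf Z}$ for $1/\cos(t+c)$; while the entire branches ($1$ when $K=0$, and $e^{t}$, $e^{-t}$ when $K=-1$) are pole-free and are distinguished from one another and from the meromorphic branches by their behaviour as $t\to\pm\infty$. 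Hence in a relation $\ssum_c b_cf_c\equiv0$ each coefficient $b_c$ must vanish --- by reading off the residue at a pole of $f_c$, or the value at infinity for $f_\infty=1$, or the exponential growth rate for $f_{\pm\infty}$ --- and since $b_c=\ssum_{c_i=c}a_i$ this is exactly the stated condition.
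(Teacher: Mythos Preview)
Your argument is correct and follows the same two-step strategy as the paper: set $\tau\equiv0$ so that \eqref{eq:ODEtau} is vacuous and the remaining equations decouple into $\lm_i'=\lm_i\mu_i$, $\mu_i'=\mu_i^2+K$; then solve the Riccati equation for $\mu_i$ first and integrate the linear equation for $\lm_i$ afterwards. The paper's proof is terse at exactly the point where you supply the most content: it simply asserts that $\ssum_i\lm_i\equiv0$ holds if and only if the grouped-sum condition on the $a_i$ holds, whereas you actually argue the linear independence of the functions $f_c$ via their pole sets (and growth at infinity for the entire branches). Your Riccati linearisation $\mu_i=-w_i'/w_i$, $w_i''+Kw_i=0$ is a clean way to package all three cases at once; the paper just says ``solve the second equation'' without indicating a method.

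One remark worth keeping: you correctly flag the branch $\mu_i=-\coth(t+c_i)$, $\lm_i=a_i/\sinh(t+c_i)$ for $K=-1$, which arises from $w_i=\sinh(t+c_i)$ and is a genuine real solution not appearing in the paper's list \eqref{eq:lm-mu:K=-1} for real $c_i$. Your treatment handles it on the same footing as the $\cosh$ branch, and your pole argument (poles at $-c+i\pi\mathbf{Z}$) separates it from the others. This is a small gap in the \emph{statement} rather than in either proof, but it is good that your analysis surfaces it.
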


\begin{proof}
Substituting $\tau=0$ in system
\eqref{eq:ODE-0},
equation \eqref{eq:ODEtau} is automatically satisfied, and we have
\begin{equation}
\lm_i'=\lm_i\mu_i,\quad
\mu_i'=(\mu_i)^2+K.
\end{equation}
We solve the second equation, and substitute the solution into the first equation.
Then we get \eqref{eq:lm-mu:K=0}, \eqref{eq:lm-mu:K=-1}, or \eqref{eq:lm-mu:K=1}.
Each solution satisfies $\ssum\lm_i=(3/2)\tau=0$ if and only if the last condition for $a_i$ is satisfied.
\end{proof}

\begin{proposition}
There exist constant solutions to
\eqref{eq:ODE-0} which satisfy $\tau\ne0$ only in the case $n=4$, $K>0$, and it holds that
$\lm_i=\pm\sqrt{K}$, $\mu_i=0$, $\tau=\pm2\sqrt{K}$ in this case.
\end{proposition}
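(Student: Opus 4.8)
The plan is to impose the constant-solution conditions $\lm_i'=\mu_i'=\tau'=\tau''=0$ directly on the system \eqref{eq:ODE-0} and to solve the resulting purely algebraic relations. From \eqref{eq:ODElm} and \eqref{eq:ODEmu}, a constant solution satisfies, for every $i<n$,
\begin{equation*}
\Bigl(\frac12\tau+\lm_i\Bigr)\mu_i=0,\qquad
\mu_i^2-\frac12\tau\lm_i+K=0,
\end{equation*}
while \eqref{eq:ODEtau}, since we are assuming $\tau\ne0$, collapses to
\begin{equation*}
\frac14\tau^2-nK+\sum_{i<n}\lm_i^2=0.
\end{equation*}

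First I would split $\{1,\dots,n-1\}$ using the first relation: for each $i$, either $\mu_i=0$, and then the second relation forces $\lm_i=2K/\tau$; or $\mu_i\ne0$, and then $\lm_i=-\tau/2$ and $\mu_i^2=-\frac14\tau^2-K$. Writing $A$, $B$ for the two index sets and $p=\#A$, $q=\#B$, so $p+q=n-1$, I would sum the $\lm_i$ over $i<n$ and use $\tau=\frac23\sum_{i<n}\lm_i$ to obtain $(q+3)\tau^2=4pK$. As $\tau\ne0$ and $q+3>0$, the left side is strictly positive, so $pK>0$; in particular $K>0$ (and $p\ge1$), so the cases $K\le0$ admit no constant solution with $\tau\ne0$.

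Once $K>0$ is established, the relation $\mu_i^2=-\frac14\tau^2-K<0$ is impossible, so $B=\varnothing$: every $\mu_i=0$ and $\lm_i=2K/\tau$ for $i<n$. Substituting back into $(q+3)\tau^2=4pK$ gives $3\tau^2=4(n-1)K$, whence $\frac14\tau^2=\frac{(n-1)K}{3}$ and $\sum_{i<n}\lm_i^2=3K$; the third relation above then reads $(8-2n)K=0$, forcing $n=4$. Hence $\tau^2=4K$, $\lm_i=2K/\tau=\pm\sqrt K$ and $\mu_i=0$, and a direct substitution into \eqref{eq:ODE-0} confirms these are genuine constant solutions. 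I do not expect a real obstacle: the only points needing care are the case split on whether $\mu_i$ vanishes and the degenerate subcase $p=0$ (which would force $\tau=0$), the latter being exactly what sharpens $K\ge0$ to the strict inequality $K>0$.
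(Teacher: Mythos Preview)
Your argument is correct and follows essentially the same route as the paper: reduce to the three algebraic relations, split each index according to whether $\mu_i=0$ or $\lm_i=-\tfrac12\tau$, rule out the latter once $K>0$ is known, and then solve for $n$. The only cosmetic difference is that the paper reads off $K>0$ directly from the third relation $\tfrac14\tau^2+\sum_{i<n}\lm_i^2=nK$, whereas you obtain it from your summed identity $(q+3)\tau^2=4pK$.
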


\begin{proof}
Substitute $\lm_i'=0$, $\mu_i'=0$, $\tau'=\tau''=0$ into
\eqref{eq:ODE-0}, we have
\begin{equation}
\label{eq:const-solutions1}
\begin{aligned}
&(\frac12\tau+\lm_i)\mu_i=0,\quad
(\mu_i)^2-\frac12\tau\lm_i+K=0,\\
&\frac14\tau^2-nK+\ssum_i(\lm_i)^2=0.
\end{aligned}
\end{equation}
By the third one of \eqref{eq:const-solutions1}, we have that $K>0$,
because $\tau\ne0$.
In the case that $\lm_i=-\tau/2$ in the first and second ones of \eqref{eq:const-solutions1},
we have $(\mu_i)^2+(1/4)\tau^2+K=0$
which does not occur.
Thus, we have $\mu_i=0$.
Then, we have by the second one,
$\lm_i=2K/\tau$.
By using together with $\tau=(2/3)n_1\lm_i$, we have
$\tau=\pm(2/\sqrt3)\sqrt{n_1K}$, $\lm_i=\pm\sqrt{3K}/\sqrt{n_1}$.
Substituting this into the third one of \eqref{eq:const-solutions1},
we obtain that
$n=4$ and the other claims.
\end{proof}

\begin{corollary}
In the case of the space form of constant curvature,
the biharmonic hypersurface all of whose principal curvatures are constant and different each other must be minimal.
\end{corollary}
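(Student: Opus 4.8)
The plan is to reduce the statement to a short algebraic impossibility, invoking the biharmonic condition only through the trace of the normal equation, and the constancy together with pairwise distinctness of the $\lm_i$ only through the Gauss and Codazzi equations. Since every $\lm_i$ is constant, the mean curvature $\tau=\ssum_i\lm_i$ is constant, so $\Dl\tau=0$; the tangential equation \eqref{eq:base-t} then holds automatically and the normal equation \eqref{eq:base-n} collapses to $(\ssum_i\lm_i^2-nK)\tau=0$. If $\tau=0$ we are done, so assume $\tau\ne0$; then $\ssum_i\lm_i^2=nK$, and since the $\lm_i$ are distinct with $n\ge2$ they are not all zero, whence $\ssum_i\lm_i^2>0$ and $K>0$.

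Next I would extract the structure of the (now globally well defined) principal frame $\{v_i\}$ from Codazzi's equation. Put $c_{ijk}:=g(\cd_{v_i}v_j,v_k)$, so $c_{ijk}=-c_{ikj}$. Taking $k=j\ne i$ in \eqref{eq:Codazzi-1}, the left side vanishes (the $\lm$'s are constant) and the right side equals $-(\lm_i-\lm_j)g(v_i,\cd_{v_j}v_j)$, so distinctness forces $\cd_{v_j}v_j=0$ for all $j$; since also $g(\cd_{v_i}v_j,v_i)=-g(v_j,\cd_{v_i}v_i)=0$ and $g(\cd_{v_i}v_j,v_j)=0$, we get $\cd_{v_i}v_j=\ssum_{l\ne i,j}c_{ijl}v_l$ for $i\ne j$. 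For pairwise distinct $i,j,k$, Codazzi's identity $\cd_i\al_{jk}=\cd_j\al_{ik}$ together with \eqref{eq:Codazzi-cd-al} gives $(\lm_j-\lm_k)c_{ijk}=(\lm_i-\lm_k)c_{jik}$; combined with $c_{ijk}=-c_{ikj}$, this expresses every index rearrangement of $c_{ijk}$ as a fixed rational multiple of $c_{ijk}$.

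I would then compute the scalar curvature $\rho$ of $M$ in two ways. From the Gauss equation $g(R(v_i,v_j)v_j,v_i)=K+\lm_i\lm_j$ one gets $\rho=\ssum_{i\ne j}(K+\lm_i\lm_j)=n(n-1)K+\tau^2-\ssum_i\lm_i^2$. On the other hand, using $\cd_{v_j}v_j=0$ (which kills $\cd_{v_i}\cd_{v_j}v_j$), expanding $R(v_i,v_j)v_j=-\cd_{v_j}\cd_{v_i}v_j-\cd_{[v_i,v_j]}v_j$ in the frame, and substituting the Codazzi relations for $c_{jil}$ and $c_{lij}$ in terms of $c_{ijl}$, a direct calculation gives $g(R(v_i,v_j)v_j,v_i)=\ssum_{l\ne i,j}\frac{2(\lm_j-\lm_l)}{\lm_i-\lm_l}c_{ijl}^2$. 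Summing over all ordered pairs $i\ne j$ and collecting, for each unordered triple $\{a,b,c\}$, its six ordered contributions (each reduced via Codazzi to a multiple of a single $c_{abc}$), the coefficient of $c_{abc}^2$ turns out to be proportional to $(\lm_b-\lm_c)(\lm_a-\lm_b)+(\lm_c-\lm_b)(\lm_a-\lm_c)+(\lm_b-\lm_c)^2$, which is identically $0$. Hence $\rho=0$, that is, $\ssum_i\lm_i^2=n(n-1)K+\tau^2$. Comparing with $\ssum_i\lm_i^2=nK$ gives $\tau^2=-n(n-2)K\le0$ because $n\ge2$ and $K>0$, contradicting $\tau\ne0$. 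Therefore $\tau\equiv0$ and $M$ is minimal.

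The one genuinely laborious step is the second evaluation of $\rho$: one must carry out the curvature expansion, track all six orderings of each index triple, substitute the Codazzi relations to bring them to a common structure function, and verify that the rational coefficients cancel. Two points worth flagging in the write-up: the overdetermined ODE of Section~\ref{section:BH-hypersurface-in-CCS} is unavailable here (since $\tau$ is constant there is no characteristic hypersurface), so everything is carried out through Gauss--Codazzi directly; and the auxiliary identity $\ssum_i\lm_i^2=n(n-1)K+\tau^2$, which holds for any hypersurface of a space form with constant pairwise distinct principal curvatures, biharmonic or not, can be checked on the parallel family of Cartan's isoparametric hypersurface in $S^4$, where it reads $\ssum_i\lm_i^2=6+\tau^2$.
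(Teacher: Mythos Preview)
Your proof is correct and takes a genuinely different route from the paper. The paper gives no explicit argument for the Corollary; it is placed directly after the Proposition classifying constant solutions of \eqref{eq:ODE-0}, with the implicit logic that a constant solution with $\tau\ne0$ forces all $\lm_i$ (for $i<n$) to coincide, contradicting simplicity. But as you rightly flag, that ODE system is set up under the running assumption of Section~\ref{section:BH-hypersurface-in-CCS} that $\tau$ is non-constant---only then does the tangential equation single out a direction $v_n$ with $\lm_n=-\tau/2$, and only then are the $\mu_i$ and hence \eqref{eq:ODEmu} available---so when all principal curvatures are constant this scaffolding is absent. Your detour through Gauss--Codazzi on $M$ itself is therefore the honest route. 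What is pleasant is that your key step, the vanishing of the scalar curvature of $M$, is exactly the computation the paper carries out in Section~\ref{section:proof-of-main-theorem} for the characteristic hypersurface $F$: your sectional-curvature formula and the triple-sum cancellation match \eqref{eq:Rijji=slm} and the identity following it, transplanted from $F\subset E^{n+1}$ to $M\subset\ol M^{n+1}(K)$. So the two arguments share the same algebraic heart; you simply deploy it one level up, where the paper's ODE machinery cannot reach, and thereby obtain a proof valid uniformly for all signs of $K$.
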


\begin{remark}
There exist examples of
biharmonic hypersurfaces having principal curvatures with multiplicities, and they are classified under their
completeness conditions
(Ichiyama, Inoguchi and Urakawa,
\cite{IIU1}, \cite{IIU2}).
They are only the case $K>0$.
In the case that $K\le0$,
there are no such biharmonic hypersurfaces which are not minimal by Proposition \ref{proposition:tau-is-nonconst}.
\end{remark}

\begin{lemma}
\label{lemma:same-lm<=>same-mu}
Let $\{\lm_i,\mu_i\}$ be a solution to \eqref{eq:ODE-0} with $\tau\ne0$.
If $\lm_i\equiv\lm_j$, then $\mu_i\equiv\mu_j$ or $\lm_i\equiv-(1/2)\tau$.
Conversely, if $\mu_i\equiv\mu_j$, then $\lm_i\equiv\lm_j$.
\end{lemma}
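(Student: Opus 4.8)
The plan is to play the ODEs \eqref{eq:ODElm} and \eqref{eq:ODEmu} against each other. Suppose $\lm_i\equiv\lm_j$ and that $\lm_i\not\equiv-(1/2)\tau$. Consider the function $w:=\mu_i-\mu_j$. Differentiating and using \eqref{eq:ODEmu} twice, $w'=(\mu_i)^2-(\mu_j)^2-\frac12\tau(\lm_i-\lm_j)=(\mu_i+\mu_j)\,w$, since $\lm_i\equiv\lm_j$ kills the last term. This is a linear homogeneous first-order ODE for $w$ along the $v_n$-curve, so $w(t)=w(t_0)\exp\!\big(\int_{t_0}^t(\mu_i+\mu_j)\big)$. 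Hence either $w\equiv0$, giving $\mu_i\equiv\mu_j$, or $w$ never vanishes. In the latter case I want to derive a contradiction from \eqref{eq:ODElm}: subtracting the two instances of \eqref{eq:ODElm} gives $0=(\lm_i)'-(\lm_j)'=(\frac12\tau+\lm_i)(\mu_i-\mu_j)=(\frac12\tau+\lm_i)\,w$ (again using $\lm_i\equiv\lm_j$). Since $w$ is nowhere zero on the interval, this forces $\frac12\tau+\lm_i\equiv0$, i.e. $\lm_i\equiv-(1/2)\tau$ — precisely the excluded alternative. This proves the first assertion.

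For the converse, suppose $\mu_i\equiv\mu_j$. Set $u:=\lm_i-\lm_j$ and differentiate using \eqref{eq:ODElm}: $u'=(\frac12\tau+\lm_i)\mu_i-(\frac12\tau+\lm_j)\mu_j=\mu_i(\lm_i-\lm_j)=\mu_i\,u$, again a linear homogeneous ODE. So $u(t)=u(t_0)\exp(\int_{t_0}^t\mu_i)$, and hence $u$ is either identically zero or nowhere zero. To rule out the nowhere-zero case I plan to use \eqref{eq:ODEmu}: since $\mu_i\equiv\mu_j$ we have $(\mu_i)'\equiv(\mu_j)'$, so $(\mu_i)^2-\frac12\tau\lm_i+K=(\mu_j)^2-\frac12\tau\lm_j+K$, which simplifies to $\tfrac12\tau\,u\equiv0$. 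Since $\tau\ne0$ by hypothesis, $u\equiv0$, i.e. $\lm_i\equiv\lm_j$.

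The arguments are short once one notices that the differences $\mu_i-\mu_j$ and $\lm_i-\lm_j$ each satisfy a scalar linear homogeneous ODE and therefore obey a dichotomy: identically zero, or never zero. The only subtlety — and the step I would be most careful about — is the logical bookkeeping of the "or" in the first statement: one must check that $\tau\ne0$ is genuinely used (it is, in the converse, via $\tfrac12\tau u\equiv0$) and that in the first part the conclusion $\mu_i\equiv\mu_j$ is obtained whenever the excluded case $\lm_i\equiv-(1/2)\tau$ fails, with no hidden third possibility. A minor point is that the solutions are real analytic on a connected interval, so "nowhere zero" is automatic from "not identically zero" for a solution of a linear homogeneous ODE; this is what makes the dichotomy clean and is implicitly the only place analyticity (or just uniqueness for linear ODEs) enters.
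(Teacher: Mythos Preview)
Your proof is correct and follows essentially the same route as the paper: both arguments subtract the two instances of \eqref{eq:ODElm} to obtain $(\tfrac12\tau+\lm_i)(\mu_i-\mu_j)\equiv0$ and subtract the two instances of \eqref{eq:ODEmu} to obtain $\tfrac12\tau(\lm_i-\lm_j)\equiv0$, then split the product. The only difference is cosmetic: the paper passes directly from a vanishing product of real-analytic functions to the vanishing of a factor, whereas you interpose the linear-ODE dichotomy for $w$ and $u$ to justify the same split; your version is slightly longer but makes that step more explicit.
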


\begin{proof}
If $\lm_i\equiv\lm_j$, then $(\tau/2+\lm_i)(\mu_i-\mu_j)\equiv0$,
hence $\tau/2+\lm_i\equiv0$ or $\mu_i\equiv\mu_j$.
Conversely, if $\mu_i\equiv\mu_j$, then $\tau(\lm_i-\lm_j)\equiv0$.
Since $\tau\ne0$, we have $\lm_i\equiv\lm_j$.
\end{proof}

Note that the case $\lm_i\equiv-\tau/2$ does not occur when the solution comes from a non-minimal biharmonic hypersurface, because $\lm_n=-\tau/2$ is simple.
Next we consider solutions with same $\lm_i$, under $K=0$.

\begin{lemma}
\label{lemma:same-lm}
The solution to
\eqref{eq:ODE-0} with $K=0$ satisfying
that
all the $\lm_i$ are the same, must satisfy
$\tau\equiv0$.
\end{lemma}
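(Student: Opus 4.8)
Assume all $\lm_i$ are equal to a common function $\lm(t)$, and $K=0$. By Lemma~\ref{lemma:same-lm<=>same-mu}, either $\lm\equiv-\tau/2$ (which, since $\tau=(2/3)n_1\lm$, forces $\lm\equiv0$, hence $\tau\equiv0$ and we are done), or all $\mu_i$ coincide with a common function $\mu(t)$. So the substantive case is $\lm_i\equiv\lm$, $\mu_i\equiv\mu$, with $\tau=(2/3)n_1\lm$. The strategy is to reduce the over-determined system \eqref{eq:ODE-0} to the single scalar pair
\begin{equation*}
\lm'=(\tfrac12\tau+\lm)\mu=(\tfrac{n_1}{3}+1)\lm\mu,\qquad
\mu'=\mu^2-\tfrac12\tau\lm=\mu^2-\tfrac{n_1}{3}\lm^2,
\end{equation*}
together with the leftover scalar constraint coming from \eqref{eq:ODEtau}, and show the constraint forces $\lm\equiv0$.

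First I would write out \eqref{eq:ODEtau} under the substitution: with $\tau=(2/3)n_1\lm$, $\ssum\mu_i=n_1\mu$, $\ssum\lm_i^2=n_1\lm^2$, and $nK=0$, equation \eqref{eq:ODEtau} becomes
\begin{equation*}
-\tfrac{2n_1}{3}\lm''+\tfrac{2n_1}{3}\lm' n_1\mu+\tfrac{2n_1}{3}\lm\bigl(\tfrac{n_1^2}{9}\lm^2+n_1\lm^2\bigr)=0,
\end{equation*}
i.e. after dividing by $\tfrac{2n_1}{3}$ and writing $c_1:=n_1/3+1$, $c_2:=n_1/3$,
\begin{equation*}
-\lm''+n_1\mu\lm'+\lm^3\bigl(\tfrac{n_1^2}{9}+n_1\bigr)=0.
\end{equation*}
Now substitute $\lm'=c_1\lm\mu$ and then $\lm''=(c_1\lm\mu)'=c_1\lm'\mu+c_1\lm\mu'=c_1^2\lm\mu^2+c_1\lm(\mu^2-c_2\lm^2)$ into this. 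Every term carries a factor $\lm$, so I may cancel $\lm$ on the open set where $\lm\ne0$ (if $\lm\equiv0$ there is nothing to prove), obtaining a polynomial identity of the form
\begin{equation*}
A\,\mu^2+B\,\lm^2=0
\end{equation*}
for explicit constants $A,B$ depending only on $n_1$; the $\mu\lm'$ term contributes $n_1 c_1\mu^2\lm$ after cancelling $\lm$, the $\lm''$ terms contribute $(c_1^2+c_1)\mu^2\lm$ and $-c_1 c_2\lm^3$, and the cubic term contributes $(n_1^2/9+n_1)\lm^3$. Collecting, $A=(n_1^2/9+n_1)-c_1 c_2$ and $B$ is the $\mu^2$-coefficient $n_1 c_1-c_1^2-c_1$; one checks $A,B\ne0$ for $n_1\ge1$ (a short computation with $c_1=n_1/3+1$, $c_2=n_1/3$).

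The key step is to differentiate this relation $A\mu^2+B\lm^2=0$ along the flow and use \eqref{eq:ODElm}, \eqref{eq:ODEmu} again: $2A\mu\mu'+2B\lm\lm'=2A\mu(\mu^2-c_2\lm^2)+2Bc_1\lm^2\mu=2\mu\bigl(A\mu^2+(Bc_1-Ac_2)\lm^2\bigr)=0$. Combined with $A\mu^2=-B\lm^2$, this gives $\mu\bigl((-B+Bc_1-Ac_2)\lm^2\bigr)=0$, i.e. $\mu\lm^2(B(c_1-1)-Ac_2)=0$. Since on our open set $\lm\ne0$, either $\mu\equiv0$ — whence by $\lm'=c_1\lm\mu$ we get $\lm'\equiv0$, so $\lm$ and hence $\tau$ is constant along $v_n$-curves, contradicting that $\tau$ is non-constant unless $\lm\equiv0$, which is what we want — or the constant $B(c_1-1)-Ac_2$ vanishes. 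So the one remaining thing to rule out is that this numerical coefficient could be zero; I would simply compute $B(c_1-1)-Ac_2$ explicitly in terms of $n_1$ and verify it is nonzero for every integer $n_1\ge1$. That arithmetic check is the only place where the argument could in principle fail, so it is the main obstacle — but since $n_1=n-1$ is a positive integer and the expression is an explicit rational function of $n_1$, it is a finite verification; alternatively, one notes that if both $A\mu^2+B\lm^2\equiv0$ and the derived relation hold identically with $\lm\ne0$, then $\mu^2$ and $\lm^2$ would satisfy two proportional linear relations forcing $\mu^2=\lm^2=0$ directly, again giving $\lm\equiv0$ and hence $\tau\equiv0$.
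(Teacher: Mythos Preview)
Your argument follows the paper's: reduce to the scalar system $\lm'=\tfrac{n_1+3}{3}\lm\mu$, $\mu'=\mu^2-\tfrac{n_1}{3}\lm^2$, substitute into \eqref{eq:ODEtau}, and obtain (after factoring out $\lm$) the constraint $n_1(n_1+6)\lm^2+(n_1^2-9)\mu^2=0$. The paper then finishes by a case split on the sign of $n_1^2-9$ (for $n_1\ge3$ both coefficients are $\ge0$, so $\lm=0$; for $n_1\le2$ write $\mu=c\lm$ and substitute back into $\lm',\mu'$ to get $(1+c^2)\lm^2=0$). Your once-more-differentiation is an equivalent uniform variant, and indeed the constant you need works out to $B(c_1-1)-Ac_2=\tfrac{2n_1(2n_1+3)}{9}\ne0$.

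Three points to clean up. (i) Your bookkeeping has a sign slip and a label swap: after cancelling $\lm$, the $\lm^2$-coefficient is $c_1c_2+n_1^2/9+n_1$ (plus, not minus), and the expressions you give for $A$ and $B$ are interchanged relative to your own convention $A\mu^2+B\lm^2=0$. (ii) The claim that $A,B\ne0$ for all $n_1\ge1$ is false: the $\mu^2$-coefficient $n_1^2-9$ vanishes at $n_1=3$. Fortunately you never actually use this claim. (iii) In the $\mu\equiv0$ branch you appeal to ``$\tau$ non-constant'', but that is not a hypothesis of the lemma (which is a pure ODE statement). Instead, from $\mu\equiv0$ on an open set use the equation $\mu'=\mu^2-\tfrac{n_1}{3}\lm^2$ directly: $0=-\tfrac{n_1}{3}\lm^2$ forces $\lm=0$, hence $\tau=0$.
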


\begin{proof}
We assume $\tau\ne0$.
We may write as $\lm_i=\lm$.
Since $\tau=(2/3)n_1\lm$, we get $\lm\ne-(1/2)\tau$.
Thus, by Lemma \ref{lemma:same-lm<=>same-mu}, all the $\mu_i$ are equal to each other, we may write as $\mu_i=\mu$.
Substituting these into \eqref{eq:ODE-0}, we have
\begin{equation}
\lm'=\frac{n_1+3}3\lm\mu,
\,\,
\mu'=\mu^2-\frac{n_1}3\lm^2,
\,\,
\frac{n_1(n_1+9)}9\lm^3+n_1\mu\lm'-\lm''=0.
\end{equation}
Differentiating the first equation, and substituting into the third one into which the first and second equations are substituted,
we obtain
\begin{equation}
\lm\cdot\{n_1(n_1+6)\lm^2+\bigl((n_1)^2-9\bigr)\mu^2\}=0.
\end{equation}
Thus, $\tau=0$ if $n_1\ge3$.

In the case that $n_1\le2$, we have
$\mu=c\,\lm$.
Substituting this into
$\lm'$, $\mu'$, and eliminating $\lm'$, we have
$(1+c^2)\lm^2=0$ which implies that $\tau=0$.
\end{proof}

We can also solve \eqref{eq:ODElm}, \eqref{eq:ODEmu} with $K=0$ if we know the function $\tau$.

\begin{proposition}
\label{proposition:r-theta}
The ordinary differential system \eqref{eq:ODElm}, \eqref{eq:ODEmu} with $K=0$ can be solved
as follows if we regard $\tau$ as a known function.
If we put $\lm_i=r_i\sin\te_i$, $\mu_i=r_i\cos\te_i$,
\begin{equation}
\te_i=\frac12\int\tau\,dt,\quad
r_i=\frac{-1}{\int\cos\te_i\,dt}.
\end{equation}
\end{proposition}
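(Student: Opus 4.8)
The plan is to substitute the polar ansatz $\lm_i=r_i\sin\te_i$, $\mu_i=r_i\cos\te_i$ directly into \eqref{eq:ODElm}, \eqref{eq:ODEmu} with $K=0$ and to disentangle the two resulting equations into one for $\te_i'$ alone and one for $r_i'$ alone. First I would write $\lm_i'=r_i'\sin\te_i+r_i\te_i'\cos\te_i$ and $\mu_i'=r_i'\cos\te_i-r_i\te_i'\sin\te_i$, and expand the right-hand sides of the ODE as $(\frac12\tau+\lm_i)\mu_i=\frac12\tau r_i\cos\te_i+r_i^2\sin\te_i\cos\te_i$ and $\mu_i^2-\frac12\tau\lm_i=r_i^2\cos^2\te_i-\frac12\tau r_i\sin\te_i$.

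Next I would take the linear combination $\cos\te_i\cdot(\text{$\lm_i$-equation})-\sin\te_i\cdot(\text{$\mu_i$-equation})$, which cancels all $r_i'$ terms and leaves $r_i\te_i'=\frac12\tau r_i$, hence $\te_i'=\frac12\tau$ and $\te_i=\frac12\int\tau\,dt$. Then I would take $\sin\te_i\cdot(\text{$\lm_i$-equation})+\cos\te_i\cdot(\text{$\mu_i$-equation})$, which cancels all $\te_i'$ terms (using $\sin^2+\cos^2=1$) and leaves $r_i'=r_i^2\cos\te_i$. Rewriting this as $(1/r_i)'=-\cos\te_i$ and integrating gives $1/r_i=-\int\cos\te_i\,dt$, i.e. $r_i=-1/\int\cos\te_i\,dt$, with the constant of integration absorbed into the indefinite integral. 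Conversely, retracing the same algebra shows that the functions $r_i,\te_i$ so defined do solve the original system when $\tau$ is treated as given, so the polar substitution is an equivalence and the solution is obtained by two quadratures once $\tau$ is known.

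This is essentially a one-line computation, so I do not expect a genuine obstacle; the only points that need care are that the change of variables $(\lm_i,\mu_i)\leftrightarrow(r_i,\te_i)$ is regular exactly where $(\lm_i,\mu_i)\ne(0,0)$, and that one must check $\int\cos\te_i\,dt\ne0$ on the interval considered, both of which hold on the domain where $F$ and the principal directions $\{v_i\}$ are defined. The two constants of integration hidden in $\te_i$ and in the indefinite integral defining $r_i$ are precisely the free parameters matching the two-dimensional initial data $(\lm_i(0),\mu_i(0))$.
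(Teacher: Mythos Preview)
Your proof is correct and follows essentially the same approach as the paper: both substitute the polar ansatz into \eqref{eq:ODElm}, \eqref{eq:ODEmu} and decouple the resulting system into $r_i\te_i'=\tfrac12\tau r_i$ and $r_i'=r_i^2\cos\te_i$, then integrate. You have merely made explicit the linear combinations that the paper leaves implicit.
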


\begin{proof}
Let us rewrite
\eqref{eq:ODElm}, \eqref{eq:ODEmu} in terms of $r_i$, $\te_i$,
\begin{equation}
\label{eq:d(r-te)-(r-te)}
\begin{aligned}
&r_i'\sin\te_i+\te_i'r_i\cos\te_i=(\frac12\tau+r_i\sin\te_i)r_i\cos\te_i,\\
&r_i'\cos\te_i-\te_i'r_i\sin\te_i=(r_i)^2\cos^2\te_i-\frac12\tau\,r_i\sin\te_i.
\end{aligned}
\end{equation}
By \eqref{eq:d(r-te)-(r-te)}, we have $r_i'=(r_i)^2\cos\te_i$, and $\te_i'r_i=(1/2)\tau r_i$ which solve $\te_i$, and $r_i$.
\end{proof}

\section{Irreducibility of principal curvature vector fields}
\label{section:irreducibility}

In this section, we assume that $M^n$ is a non-minimal biharmonic hypersurface in the Euclidean space $E^{n+1}$, and all the principal curvatures $\{\lm_i\}$ are simple.

We denote the covariant differentiation on the characteristic hypersurface
$F$ by $\wt\cd$, and
the second fundamental form of $F$ in $\ol M$ by $\wt\al$.
The second fundamental form of $F$ in $M$ is denoted by $\beta$.

Note that the unit normal frame fields $N$ and $v_n$ on
$M$ are parallel with respect to the normal connection on $F$ in $E^{n+1}$.
In fact, since
$\ol g(\ol\cd_{v_i}N,v_n)=-\ol g(N,\ol\cd_{v_i}v_n)=-\al_{in}=0$
for every $i<n$, we have
$\ol g(\wt\cd_{v_i}N,v_n)=0$, and $\wt\cd_{v_i}N=0$, $\wt\cd_{v_i}v_n=0$.

Since the $N$-component of $\wt\al$ coincides with
the restriction of $\al$ to the tangent space $TF$ of $F$ because of
$\ol g(\ol\cd_{v_i}v_j,N)=\al(v_i,v_j)$,
we use the same notation $\al$ for it.
The $v_n$-component $\beta$ of
$\wt\al$ is
$\beta_{ij}=\dl_{ij}\mu_i$ by \eqref{eq:beta-diag}.
Thus,
$\wt\al$ can be diagonalized by the frames $\{N,v_n\}$ whose eigenvalues are $\lm_i$ and $\mu_i$.

We calculate the covariant differentiation of
$\wt\al$ as follows:
\begin{equation}
\begin{aligned}
\wt\cd_i\wt\al_{jk}
&=(\wt\cd_{v_i}\wt\al)(v_j,v_k)\\
&=\wt\cd_{v_i}(\wt\al(v_j,v_k))
-\wt\al(\wt\cd_{v_i}v_j,v_k)-\wt\al(v_j,\wt\cd_{v_i}v_k)\\
&=\wt\cd_{v_i}(\dl_{jk}(\lm_kN+\mu_kv_n))
-g(\wt\cd_{v_i}v_j,v_k)(\lm_kN+\mu_kv_n)\\
&\qquad-g(v_j,\wt\cd_{v_i}v_k)(\lm_jN+\mu_jv_n)\\
&=\dl_{jk}(v_i[\lm_k]N+v_i[\mu_k]v_n)
-g(\wt\cd_{v_i}v_j,v_k)(\lm_kN+\mu_kv_n)\\
&\qquad+g(\wt\cd_{v_i}v_j,v_k)(\lm_jN+\mu_jv_n)\\
&=\{\dl_{jk}v_i[\lm_k]+(\lm_j-\lm_k)g(\wt\cd_{v_i}v_j,v_k)\}N\\
&\qquad
+\{\dl_{jk}v_i[\mu_k]+(\mu_j-\mu_k)g(\wt\cd_{v_i}v_j,v_k)\}v_n.
\end{aligned}
\end{equation}

Since
$\wt\cd_i\wt\al_{jk}=\wt\cd_j\wt\al_{ik}$, the $N$-component of $\wt\al$ coincides with
\begin{equation}
\dl_{jk}v_i[\lm_k]+(\lm_j-\lm_k)g(\wt\cd_{v_i}v_j,v_k)
=\dl_{ik}v_j[\lm_k]+(\lm_i-\lm_k)g(\wt\cd_{v_j}v_i,v_k).
\end{equation}
For $i\ne j=k$, it holds that
\begin{equation}
\label{eq:v_i[lm_j]}
v_i[\lm_j]
=(\lm_i-\lm_j)g(\wt\cd_{v_j}v_i,v_j).
\end{equation}
In the case that all the $i,j$ and $k$ are different each other,
\begin{equation}
\wt\cd_i\al_{jk}=(\lm_j-\lm_k)g(\wt\cd_{v_i}v_j,v_k)
=(\lm_i-\lm_k)g(\wt\cd_{v_j}v_i,v_k).
\end{equation}
We conclude that the quantities $s^\lm_{ijk}$ defined by
\begin{equation}
\label{eq:s^lm}
s^\lm_{ijk}:=(\lm_j-\lm_k)g(\wt\cd_{v_i}v_j,v_k)
\end{equation}
are symmetric for all distinct triplets $\{i,j,k\}$.

By the same way, we obtain the relations of $\mu$,
by considering the
$v_n$-component:
For all the $i,j$ and $k$ which are different each other, we can conclude that
\begin{equation}
\label{eq:v_i[mu_j]}
\begin{aligned}
&v_i[\mu_j]=(\mu_i-\mu_j)g(\wt\cd_{v_j}v_i,v_j),\\
&s^\mu_{ijk}:=(\mu_j-\mu_k)g(\wt\cd_{v_i}v_j,v_k)
\text{ are symmetric for all the $i,j,k$}.
\end{aligned}
\end{equation}

Assume that all the principal curvatures are simple.
If $g(\wt\cd_{v_i}v_j,v_k)\ne0$, it holds that $s^\lm_{ijk}\ne0$,
which implies that
$g(\wt\cd_{v_j}v_k,v_i)$, $g(\wt\cd_{v_k}v_i,v_j)\ne0$.
Thus, we obtain the relations that
\begin{equation}
\frac{\mu_i-\mu_j}{\lm_i-\lm_j}
=\frac{\mu_j-\mu_k}{\lm_j-\lm_k}
=\frac{\mu_k-\mu_i}{\lm_k-\lm_i}
=\frac{s^\mu_{ijk}}{s^\lm_{ijk}}.
\end{equation}
Therefore, if
$g(\wt\cd_{v_i}v_j,v_k)\ne0$ for every distinct triplet $\{i,j,k\}$,
then
all the
$\=\frac{\mu_i-\mu_j}{\lm_i-\lm_j}$
coincide each other for every distinct pair $\{i,j\}$.
Thus, if we denote the common quantity by $\vp$, then
all $\mu_i-\vp\lm_i$ have the same value.
If we denote it by $\psi$, then
it holds that $\mu_i-\vp\lm_i=\psi$ for all $i$.

Conversely, if there exist $\vp$ and $\psi$
satisfying that $\mu_i=\vp\lm_i+\psi$ for all $i$, and
there exist at least two different $\lm_i$, $\vp$ and $\psi$ are uniquely determined.
Really, we assume the following weaker conditions:

\begin{definition}
\label{definition:irreducible}
Put $J=\{\{i,j\}\mid 1\le i,j\le n_1,\; i\ne j\}$.
If a distinct triplet $\{i,j,k\}$ satisfies $g(\cd_{v_i}v_j,v_k)\ne0$, then we define $\{i,j\}\sim\{j,k\}\sim\{i,k\}$.
Let $\sim_J$ be the equivalence relation on $J$ generated by $\sim$.
If all $\{i,j\}\in J$ are equivalent under $\sim_J$, the frame field $\{v_i\}$ is {\em irreducible}.
Otherwise, the frame field is {\em reducible}.
\end{definition}

\begin{definition}
If there exist functions $\vp$, $\psi$ satisfying $\mu_i=\vp\lm_i+\psi$ for all $1\le i\le n_1$, we say that $\{\lm_i\}$ and $\{\mu_i\}$ are {\em linearly related}.
\end{definition}

As we saw, we have

\begin{lemma}
\label{lemma:linearly-related}
If the frame field $\{v_i\}$ is irreducible, then $\{\lm_i\}$ and $\{\mu_i\}$ are linearly related.
\end{lemma}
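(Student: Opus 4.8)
The plan is to unwind the chain of implications already assembled in the discussion preceding the statement and package it as a clean induction on the equivalence classes of $\sim_J$. The key observation, established in the paragraphs above, is the trichotomy of ratios: whenever $g(\wt\cd_{v_i}v_j,v_k)\ne 0$ for a distinct triplet $\{i,j,k\}$, the symmetry of $s^\lm_{ijk}$ and $s^\mu_{ijk}$ (consequences of the Codazzi equation for $\wt\al$, equations \eqref{eq:s^lm} and \eqref{eq:v_i[mu_j]}, using simplicity of all $\lm_i$) forces
\begin{equation}
\frac{\mu_i-\mu_j}{\lm_i-\lm_j}=\frac{\mu_j-\mu_k}{\lm_j-\lm_k}=\frac{\mu_k-\mu_i}{\lm_k-\lm_i}.
\end{equation}
So the first step is simply to record this as the engine of the argument, noting in particular that simplicity guarantees the denominators $\lm_i-\lm_j$ never vanish, so all the ratios in sight are well defined.

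Next I would set up the combinatorial step. Fix one pair $\{i_0,j_0\}\in J$ and let $\vp$ be the function $\vp:=\frac{\mu_{i_0}-\mu_{j_0}}{\lm_{i_0}-\lm_{j_0}}$. I claim that for every pair $\{i,j\}\in J$ equivalent to $\{i_0,j_0\}$ under $\sim_J$ we have $\frac{\mu_i-\mu_j}{\lm_i-\lm_j}=\vp$. This follows by induction on the length of a chain $\{i_0,j_0\}\sim\cdots\sim\{i,j\}$ realizing the equivalence: each elementary step $\sim$ comes from a distinct triplet $\{a,b,c\}$ with $g(\wt\cd_{v_a}v_b,v_c)\ne0$, and the displayed equality of ratios says precisely that the two pairs among $\{a,b\},\{b,c\},\{a,c\}$ linked by that step carry the same ratio value. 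Under the irreducibility hypothesis every pair in $J$ is equivalent to $\{i_0,j_0\}$, hence $\frac{\mu_i-\mu_j}{\lm_i-\lm_j}=\vp$ for all distinct $i,j\le n_1$.

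From this common-ratio conclusion the linear relation is immediate: from $\mu_i-\mu_j=\vp(\lm_i-\lm_j)$ for all $i,j$ we get $\mu_i-\vp\lm_i=\mu_j-\vp\lm_j$ for all $i,j$, so the quantity $\psi:=\mu_i-\vp\lm_i$ is independent of $i$; that is $\mu_i=\vp\lm_i+\psi$ for all $1\le i\le n_1$, which is exactly the assertion that $\{\lm_i\}$ and $\{\mu_i\}$ are linearly related. (One should remark that $n_1\ge 1$ always, and if $n_1=1$ there is nothing to prove; the content is in $n_1\ge 2$, where $J\ne\varnothing$.)

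The only genuine subtlety — and the step I would flag as the place to be careful rather than a real obstacle — is the bookkeeping in the induction: the relation $\sim$ on $J$ identifies three pairs at once from a single triplet, and one must check that the equality of ratios propagates correctly along a chain regardless of which two of the three pairs a given elementary step happens to connect. This is handled uniformly because the three-way equality of ratios above is symmetric in $i,j,k$, so any two of the three pairs arising from one triplet share the same ratio. Everything else is the formal manipulation recorded above; no further geometric input (beyond simplicity, already assumed, and the Codazzi computation, already done) is needed.
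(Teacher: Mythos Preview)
Your proposal is correct and follows essentially the same approach as the paper. The paper establishes the three-way equality of ratios from the symmetry of $s^\lm_{ijk}$ and $s^\mu_{ijk}$ in the paragraphs preceding the lemma and then simply writes ``As we saw, we have'' before stating it; your write-up makes explicit the chain/induction argument needed to pass from the elementary relation $\sim$ to the generated equivalence $\sim_J$, which the paper leaves implicit after first treating the stronger hypothesis $g(\wt\cd_{v_i}v_j,v_k)\ne0$ for all triplets.
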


When $n_1=3$, the frame field $\{v_i\}$ is reducible if and only if all
$g(\cd_{v_1}v_2,v_3)=g(\cd_{v_2}v_3,v_1)=g(\cd_{v_3}v_1,v_2)=0$.

\begin{proposition}
\label{proposition:vp,psi-constF}
If $\{\lm_i\}$ and $\{\mu_i\}$ have linear relation $\mu_i=\vp\lm_i+\psi$,
and if there exist at least three distinct $\lm_i$,
then $\vp$ and $\psi$ are constant on $F$.
\end{proposition}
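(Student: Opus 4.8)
The plan is to differentiate the identity $\mu_i=\vp\lm_i+\psi$ along the directions tangent to $F$, i.e.\ along $v_1,\dots,v_{n_1}$, and to feed into this the Codazzi-type formulas \eqref{eq:v_i[lm_j]} and \eqref{eq:v_i[mu_j]} for the second fundamental form $\wt\al$ of $F$ in $E^{n+1}$. Since all principal curvatures are simple, at any point the $\lm_i$ take at least two distinct values, so $\vp$ and $\psi$ are uniquely determined by the linear relation and are well-defined smooth functions on the open set under consideration; saying that they are \emph{constant on $F$} means precisely $v_k[\vp]=v_k[\psi]=0$ for all $k<n$.

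First I would fix distinct $i,j$ with $1\le i,j\le n_1$ and apply $v_i$ to $\mu_j=\vp\lm_j+\psi$:
\[
v_i[\mu_j]-\vp\,v_i[\lm_j]=v_i[\vp]\,\lm_j+v_i[\psi].
\]
By \eqref{eq:v_i[lm_j]} and \eqref{eq:v_i[mu_j]} the left-hand side equals
$\bigl((\mu_i-\mu_j)-\vp(\lm_i-\lm_j)\bigr)\,g(\wt\cd_{v_j}v_i,v_j)$, and the linear relation gives $\mu_i-\mu_j=\vp(\lm_i-\lm_j)$, so the left-hand side vanishes. Hence $v_i[\vp]\,\lm_j+v_i[\psi]=0$ for every $j\ne i$.

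Next, I would use the hypothesis that $\{\lm_1,\dots,\lm_{n_1}\}$ contains at least three distinct values. Fixing $i$, at most one of those values is $\lm_i$, so there are indices $j_1,j_2\ne i$ with $\lm_{j_1}\ne\lm_{j_2}$; subtracting the relations $v_i[\vp]\lm_{j_1}+v_i[\psi]=0$ and $v_i[\vp]\lm_{j_2}+v_i[\psi]=0$ yields $v_i[\vp]=0$, and then $v_i[\psi]=0$. As $i$ was an arbitrary index $<n$ and $\{v_i\}_{i<n}$ spans $TF$, both $\vp$ and $\psi$ are constant along $F$.

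The argument is short; the one point to get right is the cancellation in the middle paragraph, which is exactly the place where the linear relation meshes with the Codazzi identities — the common factor $g(\wt\cd_{v_j}v_i,v_j)$ in the formulas for $v_i[\lm_j]$ and $v_i[\mu_j]$ is what makes $v_i[\mu_j]-\vp\,v_i[\lm_j]$ collapse to zero. I do not expect a genuine obstacle; conceptually, "three distinct eigenvalues" is precisely what is needed so that the linear system in $v_i[\vp],v_i[\psi]$ is overdetermined enough to force both to vanish (for $n_1=2$ one gets only a single equation per direction and the conclusion can fail).
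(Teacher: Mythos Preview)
Your proof is correct and follows essentially the same route as the paper: differentiate $\mu_j=\vp\lm_j+\psi$ by $v_i$ for $i\ne j$, use the Codazzi identities \eqref{eq:v_i[lm_j]} and \eqref{eq:v_i[mu_j]} together with $\mu_i-\mu_j=\vp(\lm_i-\lm_j)$ to kill the left-hand side, and then exploit the three distinct values of $\lm_j$ to force $v_i[\vp]=v_i[\psi]=0$. Your justification for finding $j_1,j_2\ne i$ with $\lm_{j_1}\ne\lm_{j_2}$ is in fact slightly more explicit than the paper's.
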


\begin{proof}
For $i\ne j$, we have
\eqref{eq:v_i[lm_j]}
$v_i[\lm_j]=(\lm_i-\lm_j)g(\wt\cd_{v_j}v_i,v_j)$,
\eqref{eq:v_i[mu_j]}
$v_i[\mu_j]=(\mu_i-\mu_j)g(\wt\cd_{v_j}v_i,v_j)$.
Since $\mu_i=\vp\lm_i+\psi$, we obtain that
\begin{equation}
\begin{aligned}
v_i[\vp]\,\lm_j+\vp\,v_i[\lm_j]+v_i[\psi]
=\vp\,(\lm_i-\lm_j)\,g(\wt\cd_{v_j}v_i,v_j)
=\vp\,v_i[\lm_j].
\end{aligned}
\end{equation}
Thus, we have $v_i[\vp]\lm_j+v_i[\psi]=0$ ($i\ne j$).
Since we assume the existence of three different
$\lm_j$, we can conclude that $v_i[\vp]=v_i[\psi]=0$.
Therefore, $\vp$ and $\psi$ are constant on $F$.
\end{proof}

\section{Constantness of principal curvatures}
\label{section:constness-of-lmi}

In this section, we assume that $M^n$ is a non-minimal biharmonic hypersurface in the Euclidean space $E^{n+1}$.

In the following,
we assume that $\{\lm_i(t)\}$ and $\{\mu_i(t)\}$ have linear relation
$\mu_i(t)=\vp(t)\lm_i(t)+\psi(t)$.
This assumption holds,
under the case $n_1=2$ or
the condition that $\{v_i\}$ is irreducible in the case $n_1\ge3$.

In this section, we do not assume that
all the $\lm_i$ are simple.
However, by Lemma \ref{lemma:same-lm}, for the solutions other than
the one satisfying that $\tau\equiv0$, there exist at least two
$\lm_i$,
so $\vp$ and $\psi$ are uniquely determined.

In the following, we will assume that $\tau\ne0$,
and treat the solutions having
$\lm_i$ different each other.

\begin{lemma}
\label{lemma:dvp,dpsi}
The functions $\vp,\psi$ must satisfy the following two ordinary differential equations:
\begin{equation}
\begin{aligned}
\label{eq:dvp,dpsi}
&\vp'=-\frac12\tau(\vp^2+1)+\vp\psi,\quad
\psi'=(\psi-\frac12\tau\vp)\psi.
\end{aligned}
\end{equation}
\end{lemma}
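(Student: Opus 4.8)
The plan is to derive the two ODEs by differentiating the defining linear relation $\mu_i = \vp\lm_i + \psi$ along the $v_n$-curves and substituting the already-established ODEs \eqref{eq:ODElm} and \eqref{eq:ODEmu} for $\lm_i'$ and $\mu_i'$. Since $K=0$ here (we are in $E^{n+1}$), these read $\lm_i' = (\tfrac12\tau+\lm_i)\mu_i$ and $\mu_i' = \mu_i^2 - \tfrac12\tau\lm_i$. Differentiating $\mu_i = \vp\lm_i+\psi$ with respect to $t$ gives
\begin{equation}
\mu_i' = \vp'\lm_i + \vp\lm_i' + \psi'.
\end{equation}
Now I would substitute $\mu_i' = \mu_i^2 - \tfrac12\tau\lm_i$ on the left and $\lm_i' = (\tfrac12\tau+\lm_i)\mu_i$ on the right, and then eliminate every remaining $\mu_i$ in favour of $\vp\lm_i+\psi$ so that the resulting identity is a polynomial in the single variable $\lm_i$ with coefficients built from $\vp,\psi,\vp',\psi',\tau$.

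Carrying this out, the left side becomes $(\vp\lm_i+\psi)^2 - \tfrac12\tau\lm_i$ and the $\vp\lm_i'$ term becomes $\vp(\tfrac12\tau+\lm_i)(\vp\lm_i+\psi)$. Expanding both sides and collecting powers of $\lm_i$, the coefficient of $\lm_i^2$ yields one relation and the coefficient of $\lm_i^1$ yields another (the $\lm_i^2$ terms: $\vp^2$ on the left versus $\vp\cdot\vp$ from the product on the right cancel automatically, which is a reassuring consistency check; the genuine content sits in the lower-order coefficients). The key point — and this is where the hypotheses of the section are used — is that we are in the regime where the $\lm_i$ are distinct, so at a fixed point we have several distinct values of the variable $\lm_i$ satisfying the same polynomial identity of degree $\le 2$; hence all its coefficients must vanish separately. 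Because there are at least two (in fact, in the irreducible case $n_1\ge 3$, at least three) distinct $\lm_i$, this forces the coefficient of each power of $\lm_i$ to be zero. Reading off the coefficient of $\lm_i$ gives $\vp' = -\tfrac12\tau(\vp^2+1) + \vp\psi$ and the constant term (coefficient of $\lm_i^0$) gives $\psi' = (\psi - \tfrac12\tau\vp)\psi$, which are exactly \eqref{eq:dvp,dpsi}.

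The main obstacle — really the only subtle point — is justifying that the polynomial-in-$\lm_i$ identity forces its coefficients to vanish. One must be careful: the identity holds for each index $i$, i.e. for each of the finitely many values $\lm_1(t),\dots,\lm_{n_1}(t)$ at a given $t$, not for $\lm_i$ ranging over an interval. A polynomial of degree $\le 2$ that vanishes at three distinct points is identically zero, so the irreducible case $n_1\ge 3$ is handled directly via Lemma \ref{lemma:linearly-related} and the standing assumption that the $\lm_i$ are distinct. For $n_1=2$ one has only two distinct values, so one would instead argue that the quadratic coefficient vanishes identically by the automatic cancellation noted above, leaving a linear polynomial vanishing at two distinct points, which again forces both remaining coefficients to zero. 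With the coefficients killed, no further work is needed — the two displayed ODEs are precisely what remains, so the proof concludes at that point.
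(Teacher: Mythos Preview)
Your proof is correct and follows essentially the same route as the paper: differentiate $\mu_i=\vp\lm_i+\psi$, substitute \eqref{eq:ODElm} and \eqref{eq:ODEmu}, observe that the $\lm_i^2$ coefficients cancel so the resulting identity is linear in $\lm_i$, and conclude that both coefficients vanish because at least two $\lm_i$ are distinct. The only slight difference is that the paper invokes Lemma~\ref{lemma:same-lm} (not the simplicity assumption) to guarantee two distinct $\lm_i$, which is all that is needed once the quadratic term drops out; your more elaborate case analysis for $n_1\ge 3$ versus $n_1=2$ is unnecessary but not wrong.
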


\begin{proof}
By substituting $\mu_i=\vp\lm_i+\psi$ and \eqref{eq:ODEmu}
into \eqref{eq:ODElm}, we have
\begin{equation}
(\psi'-\psi^2+\frac12\tau\vp\psi)+(\vp'+\frac12\tau(\vp^2+1)-\psi\vp)\lm_i=0.
\end{equation}
Due to Lemma \ref{lemma:same-lm},
there exist $\lm_i$ which are different each other, and
we have \eqref{eq:dvp,dpsi} for $\vp',\psi'$.
\end{proof}

Later on, we will proceed calculations dividing by
$\vp$,
we first have to show in the following Lemma, which enables us to assume
that $\vp\ne0$.

\begin{lemma}
\label{lemma:vp!=0}
The function $\vp$ is not $0$ at a generic point.
\end{lemma}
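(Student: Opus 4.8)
The plan is to argue by contradiction: suppose $\vp\equiv0$ on some open set (hence, by real analyticity of the biharmonic hypersurface, along an open piece of a $v_n$-curve). Then the linear relation forces $\mu_i=\psi$ for all $i$, so all the $\mu_i$ coincide along that $v_n$-curve. By Lemma~\ref{lemma:same-lm<=>same-mu}, $\mu_i\equiv\mu_j$ implies $\lm_i\equiv\lm_j$ for all $i,j<n$; but we are in the regime where the $\lm_i$ are different from each other, so this is only possible if $n_1=1$, contradicting $n\ge2$. Hence $\vp$ cannot vanish identically on any open set.

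More carefully, I would feed $\vp\equiv0$ into the ODE system for $\vp,\psi$ from Lemma~\ref{lemma:dvp,dpsi}. The equation $\vp'=-\tfrac12\tau(\vp^2+1)+\vp\psi$ evaluated at $\vp=0$ gives $\vp'=-\tfrac12\tau\ne0$, since we are assuming $\tau\ne0$. Thus even if $\vp$ vanishes at a single point, it cannot vanish on an interval: $\vp=0$ is a transversal crossing, not a stationary value. Therefore the zero set of $\vp$ along each $v_n$-curve is discrete, and away from that discrete set $\vp\ne0$. Since all our subsequent computations take place generically, we may freely assume $\vp\ne0$.

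The only genuine subtlety is bookkeeping about where the hypotheses ``$\tau\ne0$'' and ``the $\lm_i$ are different from each other'' are in force. The statement ``at a generic point'' should be read relative to the standing assumptions of this section, namely $\tau\ne0$ and $\lm_i$ pairwise distinct along the $v_n$-curve; under these, the argument via $\vp'|_{\vp=0}=-\tfrac12\tau\ne0$ is immediate and clean. I expect the main (minor) obstacle to be simply checking that Lemma~\ref{lemma:dvp,dpsi} applies at a point where $\vp=0$ — but that lemma's derivation only used the existence of at least two distinct $\lm_i$, which holds here, so its conclusion $\vp'=-\tfrac12\tau(\vp^2+1)+\vp\psi$ is valid pointwise, and setting $\vp=0$ finishes the proof.

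Concretely, then, the proof is: by Lemma~\ref{lemma:dvp,dpsi} the function $\vp$ satisfies $\vp'=-\tfrac12\tau(\vp^2+1)+\vp\psi$; if $\vp(t_0)=0$ at some point $t_0$ then $\vp'(t_0)=-\tfrac12\tau(t_0)\ne0$ because $\tau\ne0$; hence $\vp$ changes sign at $t_0$ and in particular $\vp$ is not identically zero near $t_0$, so $\vp\ne0$ at a generic point. One line, essentially.
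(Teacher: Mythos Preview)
Your proposal is correct and takes essentially the same approach as the paper: both feed $\vp=0$ into the first equation of Lemma~\ref{lemma:dvp,dpsi} to obtain $-\tfrac12\tau=0$, contradicting $\tau\ne0$. The paper simply assumes $\vp\equiv0$ and derives $\tau=0$; your transversality argument ($\vp'(t_0)=-\tfrac12\tau(t_0)\ne0$ at a zero $t_0$) is a slightly more explicit way of passing from ``not identically zero'' to ``nonzero at a generic point,'' and your alternative route via Lemma~\ref{lemma:same-lm<=>same-mu} also works but is not needed.
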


\begin{proof}
Assume that $\vp\equiv0$.
By substituting this into the first equation of
Lemma \ref{lemma:dvp,dpsi},
we obtain $\tau=0$. We get Lemma \ref{lemma:vp!=0}.
\end{proof}

By Lemma \ref{lemma:vp!=0}, we will always assume that $\vp\not=0$.
Furthermore, we will have several lemmas for later uses.
We define the function $\Lm_k:=\ssum_{i\le n_1}(\lm_i)^k$ ($k=0,1,\cdots$).
Note that $\Lm_0=n_1$ by definition.

\begin{lemma}
\label{lemma:lm_i',tau'}
The differentiations
$\lm_i'$ and $\tau'$ can be expressed in terms of
$\vp$, $\psi$, and $\lm_i$ as follows:
\begin{equation}
\label{eq:lm_i',tau'}
\begin{aligned}
\lm_i'&=\frac12\tau\psi+(\psi+\frac12\tau\vp)\lm_i+\vp(\lm_i)^2,\\
\tau'&=\frac13(n_1+3)\tau\psi+\frac12\tau^2\vp+\frac23\vp\Lm_2.
\end{aligned}
\end{equation}
\end{lemma}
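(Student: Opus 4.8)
The plan is to derive both formulas by straightforward substitution of the linear relation $\mu_i=\vp\lm_i+\psi$ into the ODE system \eqref{eq:ODE-0}, with no extra input needed beyond what is already established. First I would take \eqref{eq:ODElm}, namely $\lm_i'=(\tfrac12\tau+\lm_i)\mu_i$, and replace $\mu_i$ by $\vp\lm_i+\psi$. Expanding $(\tfrac12\tau+\lm_i)(\vp\lm_i+\psi)=\tfrac12\tau\psi+(\psi+\tfrac12\tau\vp)\lm_i+\vp\lm_i^2$ gives exactly the first line of \eqref{eq:lm_i',tau'}. This is pure algebra, a single bilinear expansion.

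For the second formula I would use $\tau=(2/3)\sum_{i\le n_1}\lm_i$, so that $\tau'=(2/3)\sum_i\lm_i'$, and then sum the already-derived expression for $\lm_i'$ over $i$. Summing $\tfrac12\tau\psi$ over the $n_1$ indices gives $\tfrac12 n_1\tau\psi$; summing $(\psi+\tfrac12\tau\vp)\lm_i$ gives $(\psi+\tfrac12\tau\vp)\sum_i\lm_i=(\psi+\tfrac12\tau\vp)\cdot\tfrac32\tau$; summing $\vp\lm_i^2$ gives $\vp\Lm_2$ by the definition $\Lm_2=\sum_i\lm_i^2$. Thus
\begin{equation}
\tau'=\frac23\Bigl(\frac12 n_1\tau\psi+\frac32\tau\psi+\frac34\tau^2\vp+\vp\Lm_2\Bigr)
=\frac13(n_1+3)\tau\psi+\frac12\tau^2\vp+\frac23\vp\Lm_2,
\end{equation}
which is the second line. (One may alternatively note this is consistent with the expression for $\tau'$ computed in the proof of Lemma \ref{lemma:P0}, since $\sum_i\mu_i=\vp\tau\cdot\tfrac32+n_1\psi$ and $\sum_i\lm_i\mu_i=\vp\Lm_2+\tfrac32\psi\tau$; substituting these into $\tau'=\tfrac13\tau\sum\mu_i+\tfrac23\sum\lm_i\mu_i$ reproduces the same formula, providing a useful cross-check.)

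There is essentially no obstacle here: the lemma is a bookkeeping step that rewrites the normal-form ODE in the coordinates $(\vp,\psi,\lm_i)$ adapted to the linear relation, and its only subtlety is keeping the combinatorial coefficients $\tfrac32$, $n_1$, and $n_1+3$ straight when passing from the sum over $n_1$ principal curvatures to $\tau$. The content of the lemma is that, once linear relatedness is assumed, the evolution of each $\lm_i$ and of $\tau$ is governed entirely by the pair $(\vp,\psi)$ (whose own evolution is \eqref{eq:dvp,dpsi}) together with the power sum $\Lm_2$, which is what later sections will exploit.
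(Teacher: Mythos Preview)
Your proposal is correct and follows essentially the same approach as the paper: substitute $\mu_i=\vp\lm_i+\psi$ into \eqref{eq:ODElm} and expand to obtain the first line, then sum over $i\le n_1$ and use $\tau=(2/3)\sum_i\lm_i$ to obtain the second. The paper's proof is just the one-sentence version of exactly this computation.
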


\begin{proof}
The first equation of \eqref{eq:lm_i',tau'}
can be obtained by substituting $\mu_i=\vp\lm_i+\psi$
simply into \eqref{eq:ODElm}, and we get the second one by summing it up.
\end{proof}

\begin{lemma}
\label{lemma:dLm}
The functions $\Lm_k$ satisfy the following ordinary differential equations.
\begin{equation}
\label{eq:dLm}
\Lm_k'
=k\{\frac12\tau\psi\Lm_{k-1}+(\frac12\tau\vp+\psi)\Lm_k+\vp\Lm_{k+1}\}.
\end{equation}
\end{lemma}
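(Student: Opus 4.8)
The statement to prove is Lemma~\ref{lemma:dLm}, the ODE for $\Lm_k = \sum_{i\le n_1}(\lm_i)^k$. Here is how I would approach it.

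\medskip

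The plan is to differentiate $\Lm_k = \ssum_i(\lm_i)^k$ directly with respect to $t$ and substitute the first formula of Lemma~\ref{lemma:lm_i',tau'}. First I would write $\Lm_k' = \ssum_i k(\lm_i)^{k-1}\lm_i'$, which is just the chain rule applied termwise; since the sums are finite and the $\lm_i(t)$ are real analytic, there is no convergence issue. Then I would substitute
\[
\lm_i' = \tfrac12\tau\psi + (\psi + \tfrac12\tau\vp)\lm_i + \vp(\lm_i)^2
\]
from \eqref{eq:lm_i',tau'} into this expression. This gives three groups of terms: one proportional to $(\lm_i)^{k-1}$, one to $(\lm_i)^k$, and one to $(\lm_i)^{k+1}$, with coefficients $\tfrac12\tau\psi$, $\psi+\tfrac12\tau\vp$, and $\vp$ respectively (all independent of $i$). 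Summing over $i$ and recognizing $\ssum_i(\lm_i)^{k-1}=\Lm_{k-1}$, $\ssum_i(\lm_i)^k=\Lm_k$, $\ssum_i(\lm_i)^{k+1}=\Lm_{k+1}$ immediately yields \eqref{eq:dLm}.

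\medskip

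There is essentially no obstacle here — the lemma is a routine bookkeeping consequence of Lemma~\ref{lemma:lm_i',tau'}. The only point requiring mild care is the boundary case $k=0$: then $\Lm_0 = n_1$ is constant so $\Lm_0'=0$, and indeed the right-hand side of \eqref{eq:dLm} carries the factor $k=0$, so the formula holds trivially; one should also note that the term $k\cdot\tfrac12\tau\psi\Lm_{k-1}$ with $k=0$ would involve $\Lm_{-1}$, but it is killed by the factor $k$, so the statement is still well-formed as written (and for $k=1$ it reduces to the second line of \eqref{eq:lm_i',tau'} after using $\Lm_0=n_1$). No new ideas, real-analyticity arguments, or curvature identities are needed beyond what is already established.
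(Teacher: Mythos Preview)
Your proposal is correct and follows exactly the same approach as the paper: differentiate $\Lm_k=\ssum_i(\lm_i)^k$ termwise, substitute the expression for $\lm_i'$ from Lemma~\ref{lemma:lm_i',tau'}, and collect the three power sums. Your additional remark on the $k=0$ boundary case is a nice touch the paper omits.
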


\begin{proof}
Lemma \ref{lemma:lm_i',tau'} implies that
\begin{equation}
\begin{aligned}
\Lm_k'&=\ssum((\lm_i)^k)'
=k\ssum(\lm_i)^{k-1}\lm_i'\\
&=k\ssum(\lm_i)^{k-1}
	\{\frac12\tau\psi+(\psi+\frac12\tau\vp)\lm_i+\vp(\lm_i)^2\}\\
&=k\ssum\{\frac12\tau\psi(\lm_i)^{k-1}
	+(\frac12\tau\vp+\psi)(\lm_i)^{k}+\vp(\lm_i)^{k+1}\},
\end{aligned}
\end{equation}
from which we obtain immediately \eqref{eq:dLm}.
\end{proof}

\begin{lemma}
\label{lemma:dtau}
The function $\tau$ satisfies the following ordinary differential equation:
\begin{equation}
\label{eq:dtau}
\begin{aligned}
\tau''
-(n_1\psi+\frac{3\tau(\vp^2+1)}{2\vp})\tau'
+\frac{\tau^2(\tau\vp+(n_1+3)\psi)}{2\vp}
=0.
\end{aligned}
\end{equation}
\end{lemma}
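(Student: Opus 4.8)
The plan is to derive \eqref{eq:dtau} by a purely algebraic manipulation, deliberately avoiding differentiating the expression for $\tau'$ coming from Lemma \ref{lemma:lm_i',tau'}. Since $M^n$ lies in $E^{n+1}$ we have $K=0$, so equation \eqref{eq:ODEtau} of Proposition \ref{proposition:ODE-0} reads
\[
\tau''=\tau'\ssum_{i<n}\mu_i+\tau\bigl(\tfrac14\tau^2+\Lm_2\bigr),
\]
where $\Lm_2=\ssum_{i<n}(\lm_i)^2$. Using the linear relation $\mu_i=\vp\lm_i+\psi$ together with $\Lm_1=\ssum_{i<n}\lm_i=\frac32\tau$ gives $\ssum_{i<n}\mu_i=\vp\Lm_1+n_1\psi=\frac32\tau\vp+n_1\psi$, so the only unwanted quantity left in this identity is $\Lm_2$ itself.

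Next I would eliminate $\Lm_2$ using the second equation of \eqref{eq:lm_i',tau'}, which solves to
\[
\Lm_2=\frac{3\tau'}{2\vp}-\frac{(n_1+3)\tau\psi}{2\vp}-\frac34\tau^2 ;
\]
this is legitimate because $\vp\ne0$ at a generic point by Lemma \ref{lemma:vp!=0}. Substituting both expressions into the displayed formula for $\tau''$ and collecting the coefficient of $\tau'$ and the purely polynomial terms yields
\[
\tau''=\Bigl(n_1\psi+\frac{3\tau(\vp^2+1)}{2\vp}\Bigr)\tau'-\frac12\tau^3-\frac{(n_1+3)\tau^2\psi}{2\vp},
\]
and since $\frac12\tau^3+\frac{(n_1+3)\tau^2\psi}{2\vp}=\frac{\tau^2(\tau\vp+(n_1+3)\psi)}{2\vp}$, this is exactly \eqref{eq:dtau}.

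The computation itself is elementary; the point worth stressing is the choice of route. If one instead differentiated the identity $\tau'=\frac13(n_1+3)\tau\psi+\frac12\tau^2\vp+\frac23\vp\Lm_2$ directly, one would need $\Lm_2'$, which by Lemma \ref{lemma:dLm} involves $\Lm_3$, a quantity with no obvious cancellation; starting instead from \eqref{eq:ODEtau}, which already supplies $\tau''$, keeps every identity inside the ring generated by $\tau,\vp,\psi$ (in particular Lemma \ref{lemma:dvp,dpsi} is not needed here). Finally, since the statement concerns solutions with $\tau\ne0$ and with the $\lm_i$ distinct, and since $\vp\ne0$ on a dense open set, the ODE \eqref{eq:dtau} holds there and hence everywhere along the $v_n$-curves by real analyticity.
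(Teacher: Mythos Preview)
Your proof is correct and follows essentially the same route as the paper: both arguments compute $\ssum\mu_i=\tfrac32\tau\vp+n_1\psi$ from the linear relation, solve the identity $\tau'=\tfrac13(n_1+3)\tau\psi+\tfrac12\tau^2\vp+\tfrac23\vp\Lm_2$ for $\Lm_2$ (the paper obtains this identity by applying Lemma~\ref{lemma:dLm} to $\Lm_1=\tfrac32\tau$, you quote it from Lemma~\ref{lemma:lm_i',tau'}), and substitute both into \eqref{eq:ODEtau} with $K=0$. Your extra commentary about the alternative route via $\Lm_2'$ and the real-analyticity remark are fine but not needed.
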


\begin{proof}
Differentiate
$\Lm_1=(3/2)\tau$ in $t$,
and apply Lemma \ref{lemma:dLm}, and
express $\Lm_1$ in terms of $\tau$, we have
\begin{equation}
\Lm_2
=\frac{6\tau'-3\tau^2\vp-2(n_1+3)\tau\psi}{4\vp},
\end{equation}
and $\ssum\mu_i=\vp\Lm_1+n_1\psi$.
Substituting these into $\ssum\mu_i$, $\ssum(\lm_i)^2$ of
\eqref{eq:ODEtau}, we obtain \eqref{eq:dtau}.
\end{proof}

\begin{lemma}
\label{lemma:vp!=const}
The function $\vp$ is not constant.
\end{lemma}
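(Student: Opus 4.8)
The plan is to argue by contradiction. Suppose $\vp$ is constant, say $\vp\equiv\vp_0$; by Lemma~\ref{lemma:vp!=0} we have $\vp_0\ne0$. Setting $\vp'=0$ in the first equation of Lemma~\ref{lemma:dvp,dpsi} gives
\[
0=-\tfrac12\tau(\vp_0^2+1)+\vp_0\psi,\qquad\text{i.e.}\qquad \psi=\frac{(\vp_0^2+1)\,\tau}{2\vp_0},
\]
so $\psi$ is a fixed nonzero multiple $c\tau$ of $\tau$ with $c=(\vp_0^2+1)/(2\vp_0)$. (We are in the setting $K=0$, $\tau\ne0$, with the $\lm_i$ mutually distinct, so Lemmas~\ref{lemma:dvp,dpsi} and~\ref{lemma:dtau} both apply.)

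Next I would substitute $\psi=c\tau$ into the second equation of Lemma~\ref{lemma:dvp,dpsi}: from $c\tau'=(c\tau-\tfrac12\vp_0\tau)(c\tau)$, dividing by $c$ and using $c-\tfrac12\vp_0=1/(2\vp_0)$, one obtains
\[
\tau'=\frac{\tau^2}{2\vp_0},\qquad\text{hence}\qquad \tau''=\frac{\tau^3}{2\vp_0^2}.
\]
Thus $\psi$, $\tau'$ and $\tau''$ are all expressed through $\tau$ and the constant $\vp_0$, and it remains to test these against the second-order equation~\eqref{eq:dtau}.

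Finally I would insert $\vp=\vp_0$, $\psi=(\vp_0^2+1)\tau/(2\vp_0)$, $\tau'=\tau^2/(2\vp_0)$ and $\tau''=\tau^3/(2\vp_0^2)$ into~\eqref{eq:dtau}. Each of the three terms becomes a rational multiple of $\tau^3/\vp_0^2$; the coefficients involving $n_1$ cancel, and after dividing by $\tau^3$ (legitimate since $\tau\ne0$) and clearing $\vp_0^2$ one is left with the purely algebraic identity
\[
2+2\vp_0^2=0,
\]
i.e.\ $\vp_0^2=-1$, which is impossible for a real function. Therefore $\vp$ is not constant.

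The computation is entirely routine; the only care needed is the bookkeeping of the several rational expressions in $\vp_0$ when collapsing~\eqref{eq:dtau}, together with the two facts that make the divisions legitimate, namely $\vp_0\ne0$ (Lemma~\ref{lemma:vp!=0}) and $\tau\ne0$. There is no real obstacle: once the linear relation is used to eliminate $\psi$ in favour of $\tau$, the governing system degenerates to the single first-order equation $\tau'=\tau^2/(2\vp_0)$, and its forced compatibility with~\eqref{eq:dtau} yields the contradiction.
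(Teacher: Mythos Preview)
Your proof is correct and follows essentially the same approach as the paper: assume $\vp\equiv\vp_0\ne0$, use the first equation of Lemma~\ref{lemma:dvp,dpsi} to express $\psi$ in terms of $\tau$, then the second to get $\tau'=\tau^2/(2\vp_0)$, and finally substitute into~\eqref{eq:dtau} to obtain $(1+\vp_0^2)\tau^3=0$, a contradiction. The paper's argument is the same, merely writing the constant as $c$ rather than $\vp_0$.
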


\begin{proof}
By Lemma \ref{lemma:vp!=0}, $\vp\ne0$.
Substituting $\vp=c\;(\ne0)$ \eqref{eq:dvp,dpsi} in
Lemma \ref{lemma:dvp,dpsi}, we have
$2c\psi-(1+c^2)\tau=0$, $\psi'=\psi^2-(1/2)c\tau\psi$,
thus we obtain $\tau'=(2c)^{-1}\tau^2$.
Substituting it and its differentiation into
\eqref{eq:dtau} in Lemma \ref{lemma:dtau}, we have
$(1+c^2)\tau^3=0$, which completes the proof.
\end{proof}

Next, we will show that $\psi$ is not constant if $\tau\ne0$.
To do it, we first show that $\psi\ne0$.
Assume that $\psi(t_1)=0$ at some point $t_1$
for the solution $(\lm_i,\mu_i,\vp,\psi)$.
Then, Lemma \ref{lemma:dvp,dpsi} and the uniqueness of solution to the ordinary differential equation
imply that $\psi(t)\equiv0$.
Then, it holds that $\mu_i=\vp\,\lm_i$ for all
$i$.
All the angles $\te_i=\arctan(\lm_i/\mu_i)=\arctan(1/\vp)$ in
Proposition \ref{proposition:r-theta} are equal to each other,
so we may write them as $\te$.
Then, if we write by $p$, one of indefinite integrals
$\int\cos\te\,dt$,
we have that $\vp=\cot\te$, $p'=\cos\te$, and
\begin{equation}
\begin{aligned}
&\te=\frac12\int\tau\,dt,\quad
r_i=\frac{-1}{p+b_i},\\
&\mu_i=\frac{-\cos\te}{p+b_i},\quad
\lm_i=\frac{-\sin\te}{p+b_i},\quad
\tau=-\frac23\sin\te\cdot\ssum_{i\le n_1}\frac{1}{p+b_i}.
\end{aligned}
\end{equation}

We define shortly $s_k:=\ssum_i(p+b_i)^{-k}$.
Since $\tau\ne0$, $p$ is not constant,
and we can define $s_k$ for generic $t$.
Then, substituting these into
\eqref{eq:P0} in Lemma \ref{lemma:P0}, we have
\begin{equation}
\label{eq:cos(te)-si}
\begin{aligned}
\cos^2\te=\frac{(s_1)^3+6s_1s_2}{3(2s_1s_2+3s_3)}.
\end{aligned}
\end{equation}
Differentiate this and substitute $p'=\cos\te$ and $\te'=\tau/2$, we have
\begin{equation}
\label{eq:sin(te)-si}
\begin{aligned}
&2s_1\sin^2\te\\
&\quad=-(2s_1s_2+3s_3)^{-2}
\{4(s_1)^3(s_2)^2-4(s_1)^4s_3+9(s_1)^2s_2s_3\\
&\qquad\qquad\qquad+18(s_2)^2s_3+36s_1(s_3)^2-9(s_1)^3s_4-54s_1s_2s_4\}.
\end{aligned}
\end{equation}
By eliminating $\cos\te$ and $\sin\te$ from \eqref{eq:cos(te)-si} and \eqref{eq:sin(te)-si},
we have
\begin{equation}
\begin{aligned}
Q(p)&:=-4(s_1)^5s_2+12(s_1)^3(s_2)^2-18(s_1)^4s_3+63(s_1)^2s_2s_3\\
&\qquad+54(s_2)^2s_3+162s_1(s_3)^2-27(s_1)^3s_4-162s_1s_2s_4\\
&=0.
\end{aligned}
\end{equation}

Since $\tau\ne0$, $p$ is not constant.
Thus, $Q(p)$ must vanish identically as a rational function in $p$.
Thus, we obtain in particular,
$\=\lim_{p\to\infty}p^7Q(p)\allowbreak=0$.
On the other hand, since $\=\lim_{p\to\infty}p^ks_k=n_1$ for each
$s_k$, we obtain that
$0=-2(n_1)^3(n_1-3)(n_1+3)(2n_1+3)$.
Thus we have that $n_1=3$.

Next, denote by $m$, the number of $b_i$ which are equal to $b_1$.
Let us consider the coefficients of $(p+b_1)^{-7}$ in the partial fraction decomposition of $Q(p)$.
Then, since the coefficients
are equal to the one exchanging each $s_k$ into
$m$ in $Q(p)$, the coefficient is equal to
$-2m^3(m-3)(m+3)(2m+3)$, and it must vanish.
Thus, we obtain that $m=3$.

It means that all the $b_i$ are equal to each other.
But, in this case, all the $\lm_i$ must be equal to each other, and due to Lemma \ref{lemma:same-lm}, we obtain
that $\tau\equiv0$.

Therefore, we obtain the following lemma.
\begin{lemma}
\label{lemma:psi!=0}
The function $\psi$ does not attain $0$.
\end{lemma}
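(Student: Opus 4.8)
The plan is to argue by contradiction. Suppose $\psi(t_1)=0$ at some point $t_1$ for a solution $(\lm_i,\mu_i,\vp,\psi)$ with $\tau\ne0$. Regarding $\tau$ and $\vp$ as known functions, the second equation in Lemma \ref{lemma:dvp,dpsi} reads $\psi'=(\psi-\frac12\tau\vp)\psi$, a linear homogeneous ODE for $\psi$; since $\psi\equiv0$ solves it, uniqueness of solutions forces $\psi\equiv0$. Then $\mu_i=\vp\lm_i$ for all $i$, so in the polar description of Proposition \ref{proposition:r-theta} (available since $K=0$ in $E^{n+1}$) every angle $\te_i=\arctan(\lm_i/\mu_i)$ equals a common value $\te$, with $\vp=\cot\te$. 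Writing $p$ for an indefinite integral of $\cos\te$, so that $p'=\cos\te$ and $\te'=\tau/2$, Proposition \ref{proposition:r-theta} gives $r_i=-1/(p+b_i)$ for suitable constants $b_i$, and hence
\begin{equation}
\lm_i=\frac{-\sin\te}{p+b_i},\qquad
\mu_i=\frac{-\cos\te}{p+b_i},\qquad
\tau=-\frac23\sin\te\cdot\ssum_i\frac{1}{p+b_i}.
\end{equation}

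Next I would substitute these expressions into the constraint $P_0=0$ of Lemma \ref{lemma:P0} (with $K=0$). Setting $s_k:=\ssum_i(p+b_i)^{-k}$ --- well defined for generic $t$, since $\tau\ne0$ makes $p$ nonconstant --- this yields $\cos^2\te$ as an explicit rational function of $s_1,s_2,s_3$. Differentiating that identity in $t$ and substituting $p'=\cos\te$ and $\te'=\tau/2$ produces a second identity, now for $2s_1\sin^2\te$, rational in $s_1,\dots,s_4$. Using $\sin^2\te+\cos^2\te=1$ to eliminate $\te$ and clearing denominators leaves a polynomial relation $Q(p)=0$ among the $s_k$; since $p$ is nonconstant, $Q$ must vanish identically as a rational function of $p$.

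The contradiction then comes from two numerical consequences of $Q\equiv0$. Letting $p\to\infty$, where $p^ks_k\to n_1$, the top-order part of $p^7Q(p)$ is a nonzero numerical multiple of $n_1^3(n_1-3)(n_1+3)(2n_1+3)$, which must vanish; since $n_1=n-1\ge1$, this forces $n_1=3$. Next, isolating in the partial-fraction expansion of $Q$ the coefficient of $(p+b_1)^{-7}$ --- which is obtained from $Q$ by replacing every $s_k$ by $m$, the number of indices $i$ with $b_i=b_1$ --- gives a numerical multiple of $m^3(m-3)(m+3)(2m+3)$, again forced to be $0$, so $m=3$. Hence all the $b_i$ coincide, all the $\lm_i$ coincide, and Lemma \ref{lemma:same-lm} with $K=0$ forces $\tau\equiv0$, contradicting $\tau\ne0$.

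I expect the main obstacle to be the two algebraic eliminations leading to $Q(p)$: one must carry the $s_k$ through $P_0$ and its $t$-derivative accurately enough that the resulting $Q$ has exactly the claimed leading coefficient and $(p+b_1)^{-7}$-residue, since the whole argument rests on the polynomials $n_1^3(n_1-3)(n_1+3)(2n_1+3)$ and $m^3(m-3)(m+3)(2m+3)$ having $3$ as their only admissible positive-integer root.
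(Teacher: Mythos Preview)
Your proposal is correct and follows essentially the same route as the paper: from $\psi(t_1)=0$ one gets $\psi\equiv0$, hence a common angle $\te$, then substitutes the explicit formulas $\lm_i=-\sin\te/(p+b_i)$, $\mu_i=-\cos\te/(p+b_i)$ into $P_0=0$, differentiates, eliminates $\te$ to obtain a rational identity $Q(p)\equiv0$, and reads off $n_1=3$ and $m=3$ from the $p\to\infty$ limit and the $(p+b_1)^{-7}$ residue, forcing all $\lm_i$ equal and contradicting Lemma~\ref{lemma:same-lm}. One small slip: the ODE $\psi'=(\psi-\tfrac12\tau\vp)\psi$ is not linear in $\psi$ (it is quadratic), but your conclusion is still correct since $\psi\equiv0$ is a solution and standard uniqueness applies.
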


Finally, we can show that $\psi$ is not constant by Lemma \ref{lemma:psi!=0}.

\begin{lemma}
\label{lemma:psi!=constant}
The function $\psi$ is not constant.
\end{lemma}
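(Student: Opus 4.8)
The plan is to argue by contradiction: suppose $\psi\equiv c$ for some constant $c$, which by Lemma~\ref{lemma:psi!=0} is nonzero. I would then feed this into the system \eqref{eq:dvp,dpsi} of Lemma~\ref{lemma:dvp,dpsi}. The second equation $\psi'=(\psi-\frac12\tau\vp)\psi=0$ together with $\psi\ne0$ forces $\psi-\frac12\tau\vp=0$, i.e.\ $\tau\vp=2c$. Since $\vp\ne0$ (Lemma~\ref{lemma:vp!=0}) and $\vp$ is not constant (Lemma~\ref{lemma:vp!=const}), this already tells us $\tau=2c/\vp$ is a nonconstant function and determines $\tau$ (hence $\tau'$, $\tau''$) rationally in terms of $\vp$ and $\vp'$.

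Next I would differentiate the relation $\tau\vp=2c$ to get $\tau'\vp+\tau\vp'=0$, so $\tau'=-\tau\vp'/\vp$. Substituting $\psi=c$ and $\tau\vp=2c$ into the first equation of \eqref{eq:dvp,dpsi} gives $\vp'=-\frac12\tau(\vp^2+1)+c\vp = -\frac{c(\vp^2+1)}{\vp}+c\vp = -\frac{c}{\vp}$, a clean first-order ODE for $\vp$ alone. From this, $\tau'=-\tau\vp'/\vp = (c/\vp)\cdot\tau/\vp = c\tau/\vp^2 = 2c^2/\vp^3$, and differentiating once more yields $\tau''$ explicitly as a rational function of $\vp$. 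The remaining step is to substitute all of $\psi=c$, $\tau=2c/\vp$, $\tau'$, $\tau''$ into the second-order ODE \eqref{eq:dtau} of Lemma~\ref{lemma:dtau}; I expect this to collapse to a polynomial identity in $\vp$ (with a nonzero leading term, something like a constant multiple of $(1+\vp^2)$ times a power of $\vp$ or of $\tau$) that cannot vanish identically, contradicting the fact that $\vp$ is nonconstant and hence sweeps out an interval of values.

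The main obstacle is purely computational bookkeeping: correctly clearing denominators in \eqref{eq:dtau}, which contains the terms $\frac{3\tau(\vp^2+1)}{2\vp}$ and $\frac{\tau^2(\tau\vp+(n_1+3)\psi)}{2\vp}$, and verifying that after all substitutions the $n_1$-dependent terms do not conspire to make the identity hold for some special value of $n_1$. The structure of the earlier lemmas (compare the proof of Lemma~\ref{lemma:vp!=const}, which ran essentially the same substitution with the roles of $\vp$ and $\psi$ interchanged and ended with $(1+c^2)\tau^3=0$) strongly suggests the outcome here will again be an impossible relation of the form $(\text{nonzero constant})\cdot\tau^k=0$ or $(1+\vp^2)\cdot(\text{power of }\vp)=0$, giving the contradiction. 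Once that identity is reached, since $\tau\ne0$ (our standing assumption) and $\vp$ is nonconstant, we conclude $\psi$ cannot be constant.
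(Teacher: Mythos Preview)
Your proposal is correct and follows essentially the same route as the paper's proof. The only cosmetic difference is that the paper solves the relation $\tau\vp=2c$ for $\vp=2c/\tau$ (rather than your $\tau=2c/\vp$) and records $\tau'=\tau^3/(4c)$, which is your $\tau'=2c^2/\vp^3$ in disguise; substituting $\tau',\tau'',\vp,\psi$ into \eqref{eq:dtau} then collapses exactly to $\tau^3/2=0$, the impossible relation you anticipated.
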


\begin{proof}
Assume that $\psi=c\ne0$.
Then, by substituting it into \eqref{eq:dvp,dpsi} in
Lemma \ref{lemma:dvp,dpsi}, we have
$\vp'=(1/2)(2c\vp-\tau(1+\vp^2))$, and
$c(2c-\tau\vp)=0$.
Then, we have $\vp=2c/\tau$, $\tau'=\tau^3/(4c)$.
Together it and its differentiation,
and Lemma \ref{lemma:dtau}, we obtain $\tau^3=0$. Thus, we have
$\tau\equiv 0$.
\end{proof}

\begin{proposition}
\label{proposition:const-lmmu}
Assume that
$n_1\ge3$,
all the principal curvatures $\lambda_i$ are simple, and
$\tau\ne0$. Then, all the
$\tau$, $\vp$, $\psi$, $\lm_i$, and $\mu_i$
must be constant on $F$.
\end{proposition}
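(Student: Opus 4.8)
The plan is to prove that, near a generic point, each of $\tau,\vp,\psi,\lm_i,\mu_i$ is annihilated by $v_l$ for every $l<n$; since the $v_l$ with $l<n$ form a frame of $TF$ and $F$ is a level set of $\tau$, this is precisely the claim. The structural observation I would isolate first is the bracket identity
\[
[v_l,v_n]=\cd_{v_l}v_n-\cd_{v_n}v_l=-\mu_lv_l\qquad(l<n),
\]
which is immediate from Lemma \ref{lemma:cdvnvi=0} ($\cd_{v_n}v_l=0$) and Lemma \ref{lemma:beta-mu} ($\cd_{v_l}v_n=-\mu_lv_l$). Its consequence is that being ``constant on $F$'' is preserved by the derivative $'=v_n[\cdot]$: if $v_l[h]=0$ for all $l<n$, then $v_l[h']=v_n[v_l[h]]+[v_l,v_n][h]=-\mu_l\,v_l[h]=0$. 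I would then record the inputs already available: $\tau$ is constant on $F$ by the very definition of $F$, hence so are $\Lm_1=(3/2)\tau$ and $\Lm_1'$; since $n_1\ge3$ and all principal curvatures are simple there are at least three distinct $\lm_i$, so $\vp$ and $\psi$ are constant on $F$ by Proposition \ref{proposition:vp,psi-constF}; and $\vp\ne0$ at a generic point by Lemma \ref{lemma:vp!=0}, so we may divide by $\vp$.

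Next I would prove by induction on $k$ that every power sum $\Lm_k=\ssum_i(\lm_i)^k$ is constant on $F$ (with $\Lm_0=n_1$ trivially constant). Granting that $\Lm_0,\dots,\Lm_k$ are constant on $F$, the stability property makes $\Lm_k'$ constant on $F$, and Lemma \ref{lemma:dLm} can be solved for the next power sum,
\[
\vp\,\Lm_{k+1}=\tfrac1k\,\Lm_k'-\tfrac12\tau\psi\,\Lm_{k-1}-(\tfrac12\tau\vp+\psi)\,\Lm_k
\]
(using $\Lm_0=n_1$ when $k=1$); the right-hand side is constant on $F$, and dividing by $\vp\ne0$ shows $\Lm_{k+1}$ is constant on $F$. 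Hence all the $\Lm_k$ are constant on $F$. By Newton's identities the elementary symmetric functions of $\{\lm_i\}$ are polynomials in the $\Lm_k$, so $\prod_i(x-\lm_i)$ has coefficients constant on $F$; since the $\lm_i$ are simple and vary continuously, each $\lm_i$ is constant on $F$, and then $\mu_i=\vp\lm_i+\psi$ is as well, which together with $\tau,\vp,\psi$ completes the proof.

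The point requiring care --- and the real mechanism of the argument --- is that the differential relations of Lemmas \ref{lemma:lm_i',tau'}--\ref{lemma:dLm} must be used as genuine pointwise identities on a generic open subset of $M$, not merely along a single $v_n$-curve; this is legitimate because they arise by substituting the algebraic relation $\mu_i=\vp\lm_i+\psi$ into the pointwise Codazzi-type equation \eqref{eq:ODElm}, so they may be differentiated in directions tangent to $F$, the commutator cost being only the harmless term $-\mu_lv_l$. This is what lets one bootstrap from $\tau,\vp,\psi$ up to all of the $\Lm_k$. I note that Lemmas \ref{lemma:vp!=const}--\ref{lemma:psi!=constant} play no role here; they are needed only afterwards, in deducing $\tau\equiv0$ in the proof of the main theorem.
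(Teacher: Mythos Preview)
Your proof is correct and follows essentially the same route as the paper: induction on $k$ to show each $\Lm_k$ is constant on $F$, using Lemma~\ref{lemma:dLm} together with Proposition~\ref{proposition:vp,psi-constF} and $\vp\ne0$ to solve for $\Lm_{k+1}$, then passing from power sums to the individual $\lm_i$ via symmetric functions. Your explicit commutator identity $[v_l,v_n]=-\mu_lv_l$ is a genuine clarification: the paper simply asserts that if $\Lm_\ell$ is constant on each $F$ then so is $\Lm_\ell'$, without saying why, and your bracket computation is exactly what makes that step rigorous (the naive ``$\Lm_\ell=f(\tau)$, hence $\Lm_\ell'=f'(\tau)\tau'$\,'' argument would be circular, since $\tau'$ already involves $\Lm_2$).
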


\begin{proof}
In the following, we will show inductively
that $\Lm_k$ are constant along each $F$.
$\Lm_0=n_1$, and $\Lm_1=(3/2)\tau$ are constant
along each $F$.
Assume that all the
$\Lm_\ell$ with $\ell\le k$, are constant along each
$F$.
Then, their differentiations with respect to
$t$ are also constant along each $F$.
Therefore, by
Lemma \ref{lemma:dLm},
\begin{equation}
\frac12\tau\psi\Lm_{k-1}+(\frac12\tau\vp+\psi)\Lm_k+\vp\Lm_{k+1}
\end{equation}
are also constant along each $F$.
Since $\vp$ and $\psi$ are constant along each $F$
by Proposition \ref{proposition:vp,psi-constF}, and $\vp\ne0$,
$\Lm_{k+1}$ is also constant along each $F$.

Therefore,
all the elementary symmetric polynomials in $\lm_i$
are constant, and every $\lm_i$ is also constant along each $F$.
\end{proof}

\section{Proof of Main theorem}
\label{section:proof-of-main-theorem}

In this section, we assume that $M^n$ is a non-minimal biharmonic hypersurface in the Euclidean space $E^{n+1}$, all the principal curvatures of $M$ are simple, and the frame field is irreducible.

For every distinct triplet $\{i,j,k\}$,
\eqref{eq:v_i[lm_j]} and \eqref{eq:s^lm} hold, i.e.,
\begin{equation}
\begin{aligned}
&v_i[\lm_j]=(\lm_i-\lm_j)g(\wt\cd_{v_j}v_i,v_j),\\
&s^\lm_{ijk}:=(\lm_j-\lm_k)g(\wt\cd_{v_i}v_j,v_k)
\text{ are symmetric in $i,j$ and $k$}.
\end{aligned}
\end{equation}
By Proposition \ref{proposition:const-lmmu}, $\lm_j$ are constant along each $F$, which imply that $v_i[\lm_j]=0$.
Thus, since
$\lm_i$ are simple,
it holds that $g(\wt\cd_{v_j}v_i,v_j)=0$.
Therefore, $g(\wt\cd_{v_j}v_j,v_i)=-g(\wt\cd_{v_j}v_i,v_j)=0$.
Combining with $g(\wt\cd_{v_j}v_j,v_j)=0$, we get $\wt\cd_{v_j}v_j=0$.

By the definition of the curvature tensor field and
$g(\wt\cd_{v_i}v_j,v_k)=s^\lm_{ijk}/(\lm_j-\lm_k)$,
we obtain
\begin{equation}
\begin{aligned}
&g(\wt R(v_i,v_j)v_j,v_i)
=g(\wt\cd_{v_i}\wt\cd_{v_j}v_j-\wt\cd_{v_j}\wt\cd_{v_i}v_j
	-\wt\cd_{[v_i,v_j]}v_j,v_i)\\
&=-v_j[g(\wt\cd_{v_i}v_j,v_i)]+g(\wt\cd_{v_i}v_j,\wt\cd_{v_j}v_i)
	-g(\wt\cd_{\wt\cd_{v_i}v_j-\wt\cd_{v_j}v_i}v_j,v_i)\\
&	\qquad\qquad(\text{by }\wt\cd_{v_j}v_j=0)\\
&=\ssum_kg(\wt\cd_{v_i}v_j,v_k)g(\wt\cd_{v_j}v_i,v_k)\\
&\qquad	-\ssum_k(g(\wt\cd_{v_i}v_j,v_k)
	-g(\wt\cd_{v_j}v_i,v_k))g(\wt\cd_{v_k}v_j,v_i)\\
&	\qquad\qquad(\text{by }g(\wt\cd_{v_i}v_j,v_i)=0)\\
&=\sum_{k\ne i,j}\frac{s^\lm_{ijk}}{\lm_j-\lm_k}
		\cdot\frac{s^\lm_{jik}}{\lm_i-\lm_k}
	-\sum_{k\ne i,j}
		(\frac{s^\lm_{ijk}}{\lm_j-\lm_k}-\frac{s^\lm_{jik}}{\lm_i-\lm_k})
		\cdot\frac{s^\lm_{kji}}{\lm_j-\lm_i}\\
&=\sum_{k\ne i,j}\frac{2(s^\lm_{ijk})^2}{(\lm_i-\lm_k)(\lm_j-\lm_k)}.
\end{aligned}
\end{equation}

Therefore, the scalar curvature $\wt s$ of $F$ is expressed as
\begin{equation}
\label{eq:Rijji=slm}
\wt s
=\sum_{i\ne j}g(\wt R(v_i,v_j)v_j,v_i)
=2\sum_{i<j,\;k\ne i,j}\frac{2(s^\lm_{ijk})^2}{(\lm_i-\lm_k)(\lm_j-\lm_k)}.
\end{equation}
Note that
the terms of the right hand side of \eqref{eq:Rijji=slm} having the same
$s^\lm_{i_1i_2i_3}$ ($i_1<i_2<i_3$) are
$(i,j,k)=(i_1,i_2,i_3),(i_1,i_3,i_2),(i_2,i_3,i_1)$,
and by a simple computation, their sum vanishes
as follows:
\begin{equation}
\begin{aligned}
&\frac{2(s^\lm_{i_1i_2i_3})^2}{(\lm_{i_1}-\lm_{i_3})(\lm_{i_2}-\lm_{i_3})}
+\frac{2(s^\lm_{i_1i_2i_3})^2}{(\lm_{i_1}-\lm_{i_2})(\lm_{i_3}-\lm_{i_2})}
+\frac{2(s^\lm_{i_1i_2i_3})^2}{(\lm_{i_2}-\lm_{i_1})(\lm_{i_3}-\lm_{i_1})}\\
&\qquad=0.
\end{aligned}
\end{equation}

Thus, we have the following

\begin{lemma}
Every $F$ has zero scalar curvature.
\end{lemma}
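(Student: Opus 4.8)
The plan is to extract the value of $\wt s$ directly from the identity \eqref{eq:Rijji=slm},
\[
\wt s=2\sum_{i<j,\;k\ne i,j}\frac{2(s^\lm_{ijk})^2}{(\lm_i-\lm_k)(\lm_j-\lm_k)},
\]
which is already established just above, and to show that the right-hand side collapses to $0$ once the summands are grouped by the unordered triplet of indices they involve. No further geometry is needed; the remaining argument is purely combinatorial and algebraic.

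First I would note that, since $k$ ranges over indices distinct from the pair $\{i,j\}$ with $i<j$, each unordered triplet $\{i_1,i_2,i_3\}$ with $i_1<i_2<i_3$ contributes exactly the three summands indexed by $(i,j,k)=(i_1,i_2,i_3)$, $(i_1,i_3,i_2)$, and $(i_2,i_3,i_1)$. Because $s^\lm_{ijk}$ is symmetric under all permutations of $\{i,j,k\}$ by \eqref{eq:s^lm}, the numerator of each of these three summands equals the same constant $2(s^\lm_{i_1i_2i_3})^2$, so they differ only in their denominators; I would therefore factor this constant out of the group.

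It then remains to check the elementary identity
\[
\frac1{(\lm_{i_1}-\lm_{i_3})(\lm_{i_2}-\lm_{i_3})}
+\frac1{(\lm_{i_1}-\lm_{i_2})(\lm_{i_3}-\lm_{i_2})}
+\frac1{(\lm_{i_2}-\lm_{i_1})(\lm_{i_3}-\lm_{i_1})}=0,
\]
valid for any three distinct reals $\lm_{i_1},\lm_{i_2},\lm_{i_3}$ (clearing denominators reduces the numerator to $(\lm_{i_1}-\lm_{i_2})-(\lm_{i_1}-\lm_{i_3})+(\lm_{i_2}-\lm_{i_3})=0$, where distinctness of the $\lm_i$ is guaranteed since all principal curvatures are simple). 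Hence each triplet's contribution to $\wt s$ vanishes, and summing over all triplets gives $\wt s=0$. The only point requiring care — and it amounts to routine bookkeeping rather than a genuine obstacle — is verifying that the multiplicity-three grouping exhausts the double sum and that the \emph{full} symmetry of $s^\lm_{ijk}$, not merely a partial one, is what permits pulling out the common numerator; this is precisely the content of the displayed computation immediately preceding the statement.
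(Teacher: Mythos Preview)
Your proposal is correct and follows essentially the same approach as the paper: the paper likewise groups the summands of \eqref{eq:Rijji=slm} by unordered triplets $\{i_1,i_2,i_3\}$, uses the full symmetry of $s^\lm_{ijk}$ to give all three terms a common numerator $2(s^\lm_{i_1i_2i_3})^2$, and then verifies the same elementary identity for the three reciprocal products of differences. There is no substantive difference between your argument and the paper's.
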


On the other hand, applying Gauss equation to $F$ regarding as a submanifold of
$E^{n_1+2}$,
we have, for every $i\ne j$,
\begin{equation}
\begin{aligned}
g(\wt R(v_i,v_j)v_j,v_i)
&=\ol g(\wt\al(v_i,v_i),\wt\al(v_j,v_j))
	-\ol g(\wt\al(v_i,v_j),\wt\al(v_i,v_j))\\
&=\lm_i\lm_j+\mu_i\mu_j.
\end{aligned}
\end{equation}

Thus, we have
\begin{equation}
\label{eq:Lm2-tau-vp-phi}
\begin{aligned}
0&=\ssum_{i\ne j}\;g(\wt R(v_i,v_j)v_j,v_i)
=\ssum_{i<j}\;(2\lm_i\lm_j+2\mu_i\mu_j)\\
&=(\ssum_i\lm_i)^2-\ssum_i(\lm_i)^2+(\ssum_i\mu_i)^2-\ssum_i(\mu_i)^2\\
&=(n_1-1)\psi(n_1\psi+3\tau\vp)
+\frac14(1+\vp^2)(9\tau^2-4\Lm_2).
\end{aligned}
\end{equation}

We rewrite \eqref{eq:Lm2-tau-vp-phi} as follows.
\begin{equation}
\label{eq:Lm20}
\begin{aligned}
4(n_1-1)\psi(n_1\psi+3\tau\vp)+(1+\vp^2)(9\tau^2-4\Lm_2)
=0.
\end{aligned}
\end{equation}
By substituting $\mu_i=\vp\lm_i+\psi$ into \eqref{eq:P0} of
Lemma \ref{lemma:P0}, we have
\begin{equation}
\label{eq:Lm30}
\begin{aligned}
4&\bigl((n_1)^2-9\bigr)\tau\psi^2
+2\vp\psi\{(6n_1-9)\tau^2+4(n_1-6)\Lm_2\}\\
&\qquad+3\{3\tau^3(1+\vp^2)+8\tau\Lm_2-8\vp^2\Lm_3\}\\
&=0
\end{aligned}
\end{equation}
Together with \eqref{eq:Lm20} and \eqref{eq:Lm30},
and differentiating twice \eqref{eq:Lm20} and \eqref{eq:Lm30}, and eliminating
$\Lm_3,\Lm_2$, and $\tau$,
we obtain our main theorem.

\begin{theorem}
\label{theorem:bh-is-minimal}
Every biharmonic hypersurface $M$ in the Euclidean space is minimal if we assume that
all the principal curvatures are simple, and the frame $\{v_i\}$
of a characteristic submanifold $F=\{\tau=c\}$
for a constant $c$, is irreducible.
\end{theorem}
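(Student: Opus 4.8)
The plan is to carry out the polynomial elimination that has been prepared by the two identities \eqref{eq:Lm20} and \eqref{eq:Lm30}. First recall the geometric setting. Since $M$ is non-minimal, $\tau\not\equiv0$ by Proposition \ref{proposition:tau-is-nonconst}; irreducibility of the frame gives the linear relation $\mu_i=\vp\lm_i+\psi$ by Lemma \ref{lemma:linearly-related}; and, once $n_1\ge3$, Proposition \ref{proposition:const-lmmu} shows that $\tau,\vp,\psi,\lm_i,\mu_i$ are all constant along each characteristic hypersurface $F$. Consequently these quantities become functions of the single parameter $t$ along the $v_n$-curves, governed by the ordinary differential equations \eqref{eq:ODE-0}, \eqref{eq:dvp,dpsi}, \eqref{eq:dLm}, \eqref{eq:lm_i',tau'} and \eqref{eq:dtau}; moreover $\vp\not\equiv0$, $\psi\not\equiv0$ (Lemmas \ref{lemma:vp!=0}, \ref{lemma:psi!=0}) and $\vp,\psi$ are non-constant (Lemmas \ref{lemma:vp!=const}, \ref{lemma:psi!=constant}), which is what will legitimize the divisions below at a generic $t$. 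The remaining cases $n=2,3$, that is $n_1\le2$, are already covered by the Corollary following Proposition \ref{proposition:ODE-0}; so I may assume $n_1\ge3$, and then the simplicity of the $\lm_i$ supplies the ``at least three distinct $\lm_i$'' hypothesis used in Sections \ref{section:irreducibility} and \ref{section:constness-of-lmi}.

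The two polynomial identities one has in hand are \eqref{eq:Lm20}, which expresses that every $F$ has zero scalar curvature (via the Gauss equation $g(\wt R(v_i,v_j)v_j,v_i)=\lm_i\lm_j+\mu_i\mu_j$), and \eqref{eq:Lm30}, which is the relation $P_0=0$ of Lemma \ref{lemma:P0} rewritten under $\mu_i=\vp\lm_i+\psi$. I would first solve \eqref{eq:Lm20}, which is linear in $\Lm_2$ with coefficient $-4(1+\vp^2)\ne0$, for $\Lm_2$ as a rational function of $\tau,\vp,\psi$; then solve \eqref{eq:Lm30}, which is linear in $\Lm_3$ with coefficient $-24\vp^2\ne0$, for $\Lm_3$ as a rational function of $\tau,\vp,\psi$ (using $\Lm_1=(3/2)\tau$ and the just-obtained $\Lm_2$). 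Differentiating \eqref{eq:Lm20} in $t$, expanding $\Lm_2'$ by \eqref{eq:dLm} (which introduces $\Lm_3$) and $\tau',\vp',\psi'$ by \eqref{eq:lm_i',tau'} and \eqref{eq:dvp,dpsi}, and substituting the rational expressions for $\Lm_2$ and $\Lm_3$, produces a polynomial identity $R_1(\tau,\vp,\psi)\equiv0$. Differentiating \eqref{eq:Lm20} and \eqref{eq:Lm30} once or twice more, solving the resulting equations for $\Lm_4$ (and $\Lm_5$) and substituting as before, gives a second polynomial identity $R_2(\tau,\vp,\psi)\equiv0$; by construction $(\tau(t),\vp(t),\psi(t))$ lies on the algebraic set $\{R_1=R_2=0\}$.

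It remains to derive a contradiction. The triple $(\tau,\vp,\psi)$ obeys the autonomous system formed by $\vp'=-\tfrac12\tau(\vp^2+1)+\vp\psi$, $\psi'=(\psi-\tfrac12\tau\vp)\psi$, and the $\tau'$-equation of \eqref{eq:lm_i',tau'} with $\Lm_2$ substituted. Differentiating $R_1=0$ and $R_2=0$ along this flow and cancelling the nonzero factors $\vp$ and $\psi$ that appear, one eliminates $\tau$ and reaches a polynomial relation between $\vp$ and $\psi$ alone; differentiating that relation once more along the flow and again discarding the factors $\vp,\psi$ forces, after a short computation, $\tau\equiv0$, contradicting non-minimality. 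Hence no such hypersurface exists and $M$ is minimal.

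The genuine difficulty is not conceptual but bookkeeping: the degrees in $\tau,\vp,\psi$ and in the integer parameter $n_1$ grow quickly through the successive differentiations and resultants, and at each elimination step one must check that the leading coefficient being divided out is not identically zero. This is precisely where the non-vanishing Lemmas \ref{lemma:vp!=0}, \ref{lemma:vp!=const}, \ref{lemma:psi!=0}, \ref{lemma:psi!=constant}, and possibly a short split according to the value of $n_1$, become indispensable; a careful (computer-assisted, if necessary) verification of these non-degeneracy conditions is the crux of the proof.
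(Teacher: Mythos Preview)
Your overall strategy matches the paper's: eliminate $\Lm_3$ and $\Lm_2$ from \eqref{eq:Lm20}, \eqref{eq:Lm30} and their $t$-derivatives to obtain two polynomial relations in $(\tau,\vp,\psi)$ alone, then eliminate $\tau$. Two points deserve correction or sharpening.

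First, a small slip: the Corollary after Proposition~\ref{proposition:ODE-0} treats only $n=2$; the case $n=3$ (i.e.\ $n_1=2$) is the Hasanis--Vlachos/Defever theorem cited in the introduction, not a consequence of that Corollary.

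Second, and more substantively, your endgame is where the proposal diverges from the paper and becomes incomplete. The paper does not merely arrive at ``a polynomial relation between $\vp$ and $\psi$ alone'' and then differentiate again. Instead it produces explicitly a cubic \eqref{eq:taup3} and a quartic \eqref{eq:taup4} in $\tau$, runs Euclid's algorithm on them as polynomials in $\tau$, and observes that the final remainder $f_0$ \emph{factors} as
\[
72n_1(n_1-1)\vp^3\psi^4(1+\vp^2)^2\;\times\;\bigl(\text{polynomial in }\vp\text{ alone}\bigr)^2\;\times\;\bigl(\text{polynomial in }\vp\text{ alone}\bigr),
\]
where the two univariate factors in $\vp$ have leading coefficients that are nonzero for every positive integer $n_1$. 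This factorisation is the decisive structural fact: one factor must vanish identically, and since a nonzero univariate polynomial has finitely many roots, $\vp$ (or $\psi$, from the first factor) would have to be constant, contradicting Lemmas~\ref{lemma:vp!=const} and~\ref{lemma:psi!=constant}. Your alternative, ``differentiate $Q(\vp,\psi)=0$ once more and discard factors $\vp,\psi$ to force $\tau\equiv0$'', is not justified as stated: differentiating $Q=0$ along the flow gives a relation linear in $\tau$, which expresses $\tau$ rationally in $\vp,\psi$; substituting back yields a second curve $Q_1(\vp,\psi)=0$, and you would then need to argue that $Q$ and $Q_1$ have at most finitely many common zeros. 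That may well succeed, but it is an extra step you have not carried out, whereas the paper's observation that $f_0$ involves $\vp$ alone bypasses it entirely.
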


\begin{proof}
In the following, we will proceed to eliminate
$\Lm_3,\Lm_2$, and $\tau$ exactly.
By differentiating
\eqref{eq:Lm20} in $t$, we have
\begin{equation}
\label{eq:Lm3}
\begin{aligned}
&9(n_1-1)\tau\vp\psi^2+2n_1(n_1-1)\psi^3\\
&\quad+\frac{1}{2}\psi\{9\tau^2(1+2\vp^2)+4\Lm_2(-1+(n_1-3)\vp^2)\}\\
&\quad+\vp(1+\vp^2)(3\tau\Lm_2-2\Lm_3)\\
&=0.
\end{aligned}
\end{equation}

Eliminating $\Lm_3$ from
\eqref{eq:Lm30} and \eqref{eq:Lm3}, we have
\begin{equation}
\label{eq:Lm2}
\begin{aligned}
&-9\tau^3(1+\vp^2)^2
-6\tau^2\vp\psi\{2(n_1-6)+(2n_1-21)\vp^2\}\\
&\quad-4\tau\psi^2\{(n_1)^2-9+((n_1)^2-27n_1+18)\vp^2\}\\
&\quad+24n_1(n_1-1)\vp\psi^3\\
&\quad+4\Lm_2\{3\tau(-2+\vp^2+3\vp^4)+2\vp(-n_1+3+(2n_1-3)\vp^2)\psi\}\\
&=0.
\end{aligned}
\end{equation}

Eliminating $\Lm_2$ from
\eqref{eq:Lm20} and \eqref{eq:Lm2}, we have
\begin{equation}
\label{eq:taup3}
\begin{aligned}
&9\tau^3(1+\vp^2)^2(-7+8\vp^2)\\
&\quad+6\tau^2\vp\psi(1+\vp^2)(-17n_1+33+2(11n_1-3)\vp^2)\\
&\quad+4\tau\psi^2\{-7(n_1)^2+6n_1+9-(n_1-9)(5n_1-3)\vp^2\\
&\qquad\qquad\qquad+4n_1(5n_1-3)\vp^4\}\\
&\quad+8n_1(n_1-1)\vp\psi^3(-n_1+6+2n_1\vp^2)\\
&=0.
\end{aligned}
\end{equation}

To eliminate $\tau$,
differentiate
\eqref{eq:taup3} in $t$, and apply Lemma \ref{lemma:lm_i',tau'} to $\tau'$, and eliminate $\Lm_2$
by using \eqref{eq:Lm20}. Then, we obtain the following four equations:

By differentiating the first term of \eqref{eq:taup3}, we have
\begin{equation}
\label{eq:bh-is-minimal-part1}
\begin{aligned}
\{&9\tau^3(1+\vp^2)^2(-7+8\vp^2)\}'\\
&=9\tau^2(1+\vp^2)(-7+8\vp^2)\\
&\qquad\times\{6\tau^2\vp(1+\vp^2)+\tau\psi(n_1+3+(7n_1-3)\vp^2)\\
&\qquad\qquad+2n_1(n_1-1)\vp\psi^2\}\\
&\quad-54\tau^3\vp(1+\vp^2)(-1+4\vp^2)(\tau(1+\vp^2)-2\vp\psi).
\end{aligned}
\end{equation}

By differentiating the second term of \eqref{eq:taup3}, we have
\begin{equation}
\label{eq:bh-is-minimal-part2}
\begin{aligned}
&\{6\tau^2\vp\psi(1+\vp^2)\bigl(-17n_1+33+2(11n_1-3)\vp^2\bigr)\}'\\
&=\tau\vp\psi\{-17n_1+33+2(11n_1-3)\vp^2\}\\
&\qquad\times
\{21\tau^2\vp(1+\vp^2)+2\tau\psi(2n_1+9+(14n_1-3)\vp^2)\\
&\qquad\qquad+8n_1(n_1-1)\vp\psi^2\}\\
&\quad-3\tau^2\psi(\tau(1+\vp^2)-2\vp\psi)\\
&\qquad\times\{-17n_1+33+3(5n_1+27)\vp^2+10(11n_1-3)\vp^4\}.
\end{aligned}
\end{equation}

By differentiating the third term of \eqref{eq:taup3}, we have
\begin{equation}
\label{eq:bh-is-minimal-part3}
\begin{aligned}
&\{4\tau\psi^2p\}'\\
&=4\tau^2\vp\psi^2p
+\frac{8n_1(n_1-1)\vp\psi^4p}{3(1+\vp^2)}\\
&\quad+4\tau p
\{2\psi^3+\frac{1}{3}(n_1+3)\psi^3+\frac{2(n_1-1)\vp^2\psi^3}{1+\vp^2}\}\\
&\quad-4(5n_1-3)\tau\vp\psi^2(-n_1+9+8n_1\vp^2)(\tau(1+\vp^2)-2\vp\psi),
\end{aligned}
\end{equation}
where $p=-7(n_1)^2+6n_1+9-(n_1-9)(5n_1-3)\vp^2+4n_1(5n_1-3)\vp^4$.

And finally differentiating the fourth term of \eqref{eq:taup3}, we have
\begin{equation}
\label{eq:bh-is-minimal-part4}
\begin{aligned}
\{8&n_1(n_1-1)\vp\psi^3(-n_1+6+2n_1\vp^2)\}'\\
&=-4n_1(n_1-1)\psi^3(-n_1+6+2n_1\vp^2)(\tau(1+4\vp^2)-8\vp\psi)\\
&\quad-16(n_1)^2(n_1-1)\vp^2\psi^3(\tau(1+\vp^2)-2\vp\psi).
\end{aligned}
\end{equation}

Together \eqref{eq:bh-is-minimal-part1}, \eqref{eq:bh-is-minimal-part2}, \eqref{eq:bh-is-minimal-part3} and \eqref{eq:bh-is-minimal-part4}, by multiplying the denominator, and
setting all the terms of the resulting equation in order of $\tau$, we obtain the following equation:
\begin{equation}
\label{eq:taup4}
\begin{aligned}
&324\tau^4\vp(1+\vp^2)^3(-3+2\vp^2)\\
&\,+9\tau^3\psi(1+\vp^2)^2\\
&\qquad\times\{-4(n_1+24)-(257n_1-249)\vp^2+4(53n_1+15)\vp^4\}\\
&\,+6\tau^2\vp\psi^2(1+\vp^2)\bigl\{-101(n_1)^2-159n_1+468\\
&\qquad\qquad-(265(n_1)^2-783n_1-90)\vp^2
+4(85(n_1)^2+33n_1-18)\vp^4\bigr\}\\
&\,+4\tau\psi^3\bigl\{-4(n_1)^3-78(n_1)^2+81n_1+81\\
&\qquad\qquad\qquad-3(53(n_1)^3-69(n_1)^2-222n_1+126)\vp^2\\
&\qquad\qquad\qquad-9(3(n_1)^3-89(n_1)^2+11n_1+27)\vp^4\\
&\qquad\qquad\qquad+4n_1(59(n_1)^2+21n_1-36)\vp^6\bigr\}\\
&\,+8n_1(n_1-1)\vp\psi^4\bigl\{-(n_1-3)(7n_1+27)\\
&\qquad\qquad\qquad\qquad\qquad-(n_1-15)(5n_1+3)\vp^2
+4n_1(5n_1+6)\vp^4\bigr\}\\
&=0.
\end{aligned}
\end{equation}

The equation \eqref{eq:taup3}
is of third order in $\tau$, and \eqref{eq:taup4} is of fourth order in $\tau$.
Therefore, we use Euclid's algorithm to eliminate
$\tau$.
Namely,
dividing \eqref{eq:taup4} by \eqref{eq:taup3},
the remainder, denoted by
$f_2$, is of order two in $\tau$.
And dividing
\eqref{eq:taup3} by $f_2$, we denote the remainder by
$f_1$, and
finally dividing $f_2$ by $f_1$,
we denote the remainder by $f_0$, then $f_0$ does not
include $\tau$.
Note that the remainder is of the form
of rational function $r/q$ (where $q,r$ are polynomials).
But, if we multiply by $q$ in advance, we may ignore $q$, and may use $r$ in the next step.
Finally the obtained
numerator is of the form which is the multiplication
of the following three polynomials:
\begin{equation}
\label{eq:taup0-a}
72n_1(n_1-1)\vp^3\psi^4(1+\vp^2)^2,
\end{equation}
\begin{equation}
\label{eq:taup0-b}
\begin{aligned}
&\{-105(3(n_1)^2+n_1+12)+(1026(n_1)^2+875n_1-5901)\vp^2\\
&\quad-2(567(n_1)^2+1264n_1-351)\vp^4
+8(45(n_1)^2+34n_1-159)\vp^6\}^2,
\end{aligned}
\end{equation}
\begin{equation}
\begin{aligned}
\label{eq:taup0-c}
-&7(17n_1-33)^2(5(n_1)^4-6(n_1)^3-13(n_1)^2+3n_1+27)\\
&-\bigl\{100964(n_1)^6-269159(n_1)^5-362329(n_1)^4+1289439(n_1)^3\\
&\qquad\quad+839475(n_1)^2-4210164n_1+2755134\bigr\}\vp^2\\
&+3\bigl\{167725(n_1)^6-789504(n_1)^5+904142(n_1)^4+651168(n_1)^3\\
&\qquad\quad-3766311(n_1)^2+5957928n_1-3211164\bigr\}\vp^4\\
&-\bigl\{825166(n_1)^6-5345493(n_1)^5+11702488(n_1)^4+1018458(n_1)^3\\
&\qquad\quad-21744558(n_1)^2+11059011n_1+2628288\bigr\}\vp^6\\
&+\bigl\{576422(n_1)^6-5693096(n_1)^5+13627127(n_1)^4-9103320(n_1)^3\\
&\qquad\quad-22777452(n_1)^2+44245224n_1-20731545\bigr\}\vp^8\\
&-12\bigl\{13322(n_1)^6-199503(n_1)^5+758200(n_1)^4-367722(n_1)^3\\
&\qquad\quad-1262466(n_1)^2+1720305n_1-683640\bigr\}\vp^{10}\\
&-4\bigl\{1094(n_1)^6+56460(n_1)^5-446779(n_1)^4+834336(n_1)^3\\
&\qquad\quad+731808(n_1)^2-2056212n_1+843453\bigr\}\vp^{12}\\
&+16(7n_1-9)^2(2(n_1)^2-9)(5(n_1)^2-26n_1-43)\vp^{14}.
\end{aligned}
\end{equation}

The multiplication $f_0$ of three equations
\eqref{eq:taup0-a}, \eqref{eq:taup0-b} and \eqref{eq:taup0-c}
should be zero identically.
This implies that one of the factors of $f_0$ should be zero identically.
Note that each factor of $f_0$ is a polynomial only in
$\vp$ or $\psi$, and its coefficient
of the highest term is
non-zero for every natural number $n_1$.
Therefore, if $f_0$ vanishes identically,
then $\vp$ or $\psi$ must be a constant.
Thus, by
Lemmas \ref{lemma:vp!=const} and \ref{lemma:psi!=constant}, we obtain that $\tau\equiv0$.

We have done.
\end{proof}

Since Euclid's algorithm for polynomials is a tedious calculation, we will give examples of calculation using a computer in Appendix.

\section{Appendix: Euclid's algorithm using a computer}

We give two examples of calculations using formula manipulation systems, mathematica\footnote{Mathematica is a registered trademark of Wolfram Research Inc.}
and Maple\footnote{Maple is a registered trademark of Waterloo Maple Inc.}.
Both are commercial softwares, but free softwares probably have similar functions.

\medbreak
With mathematica, we calculate as follows.

\medbreak
\noindent
\verb|f3 = |$\langle$the left hand side of \eqref{eq:taup3}$\rangle$\verb|;|\\
\verb|f4 = |$\langle$the left hand side of \eqref{eq:taup4}$\rangle$\verb|;|\\
\verb|f2 = Numerator[Factor[PolynomialRemainder[f4, f3, tau]]];|\\
\verb|f1 = Numerator[Factor[PolynomialRemainder[f3, f2, tau]]];|\\
\verb|f0 = Numerator[Factor[PolynomialRemainder[f2, f1, tau]]]|
\smallbreak

\medbreak
For Maple, we prepare a function which calculates remainder of multivariable polynomials.

\begin{verbatim}
with(PolynomialTools);
polynomialremainder := proc(poly1, poly2, var)
  local cf1, cf2, deg11, deg12, top2, ratio, i, j, poly;
  cf1 := CoefficientList(poly1, var);
  deg11 := numelems(cf1);
  cf2 := CoefficientList(poly2, var);
  deg12 := numelems(cf2);
  top2 := cf2[deg12];
  for j from deg11 by -1 to deg12 do
    ratio := cf1[j]/top2;
    for i from 0 to deg12-1 do
      cf1[j-i] := cf1[j-i]-cf2[deg12-i]*ratio
    end do
  end do;
  poly := 0;
  for i to deg11 do
    poly := poly+cf1[i]*var^(i-1)
  end do;
  return poly
end proc;
\end{verbatim}

Using this, we calculate as follows.

\smallbreak
\noindent
\verb|f3 := |$\langle$the left hand side of \eqref{eq:taup3}$\rangle$\verb|;|\\
\verb|f4 := |$\langle$the left hand side of \eqref{eq:taup4}$\rangle$\verb|;|\\
\verb|f2 := numer(factor(polynomialremainder(f4, f3, tau)));|\\
\verb|f1 := numer(factor(polynomialremainder(f3, f2, tau)));|\\
\verb|f0 := numer(factor(polynomialremainder(f2, f1, tau)));|
\smallbreak

\end{document}